\newtheorem{Theorem}{Theorem}
\newtheorem{Lemma}{Lemma}
\newtheorem{Proposition}{Proposition}
\DeclareFontFamily{U}{mathx}{\hyphenchar\font45}
\DeclareFontShape{U}{mathx}{m}{n}{
     <5> <6> <7> <8> <9> <10>
     <10.95> <12> <14.4> <17.28> <20.74> <24.88>
     mathx10
     }{}
\DeclareSymbolFont{mathx}{U}{mathx}{m}{n}
\DeclareMathAccent{\widecheck}{9}{mathx}{"71}
\begin{document}
\title{Tree lengths  for general $\Lambda$-coalescents and the asymptotic site frequency spectrum around the Bolthausen-Sznitman coalescent}

\author{Christina S. Diehl\thanks{Institut f\"ur Mathematik, Goethe Universit\"at, Frankfurt am Main, Germany \newline diehl@math.uni-frankfurt.de, kersting@math.uni-frankfurt.de \newline Work partially supported by the DFG Priority Programme SPP 1590 ``Probabilistic Structures in Evolution''} $\ $ and G\"otz Kersting$^*$}
\maketitle
\begin{abstract}
We study tree lengths in  $\Lambda$-coalescents without a dust component from a sample of $n$ individuals. For the total length of all branches  and the total length of all external branches we present laws of large numbers in full generality. The other results treat regularly varying coalescents with exponent 1, which cover the Bolthausen-Sznitman coalescent. The theorems contain  laws of large numbers for the total length of all internal branches and of internal branches of order $a$ (i.e. branches carrying $a$ individuals out of the sample). These results transform immediately to sampling formulas in the infinite sites model. In particular, we obtain the asymptotic site frequency spectrum of the Bolthausen-Sznitman coalescent. The proofs rely on a new technique to obtain laws of large numbers for certain functionals of decreasing Markov chains.\\
{\em MSC 2010 subject classification.} 60J75 (primary), 60F05, 60J27, 92D25$^{\color{white} \big|}$\\
{\em Keywords and phrases.} $\Lambda$-coalescent,  Bolthausen-Sznitman coalescent, law of large numbers, tree length,  infinite sites model, sampling formula, site frequency spectrum, decreasing Markov chain
\end{abstract}

\section{Introduction and main results}
$\Lambda$-coalescents are established models for family trees of a sample of individuals from some large population. Its most prominent representative, the Kingman coalescent \cite{Ki}, is widely used in population genetics. More recently the Bolthausen-Sznitman coalescent \cite{Bo} gained attention for models including selection. The  class of Beta-coalescents with parameter $1<\alpha < 2$ has been applied to marine populations \cite{Bi}. In this paper we investigate branch lengths and sampling formulas in the infinite sites model for the whole class of $\Lambda$-coalescents without a dust component, which cover all these special cases.

  $\Lambda$-coalescents have been introduced by Pitman \cite{Pi} and Sagitov \cite{Sa} as  Markov processes whose states are partitions of $\mathbb N$ and whose evolution may be imagined as a random tree. In this paper we identify a $\Lambda$-coalescent  with an induced sequence of $n$-coalescents, $n \in \mathbb N$, by restricting  partitions to the subsets $\{1,\ldots,n\}$ of $\mathbb N$. These $n$-coalescents are considered to be  Markovian models for the family trees of a sample of $n$ individuals. If such a tree contains $b \in \{2,\ldots,n\}$ lineages at the moment $t \ge 0$ backwards in time (representing the ancestors living at that moment), then it is assumed that out of them $k\in \{2,\ldots,b\}$ specified lines merge at rate
\[ \lambda_{b,k} := \int_{[0,1]} p^k(1-p)^{b-k} \frac{\Lambda(dp)}{p^2} \]
to one line. Here $\Lambda$ denotes any finite, non-vanishing measure on $[0,1]$. The resulting tree consists of $n$ leaves, $\tau_n$ merging events, and a root at time $\tilde \tau_n >0$ representing the MCRA (most recent common ancestor). Its branches have lengths which specify  lifetimes. There are external branches ending in leaves on the one hand and internal branches ending in mergers on the other. For the  detailed partition valued picture we refer to the survey \cite{Be1}. 

In the sequel we  work with the Markovian  counting process $N_n=(N_n(t))_{t \ge 0}$, where $N_n(t)$ denotes the number of lineages at time $t\ge 0$. Thus $N_n(0)=n$ and $N_n(\tilde \tau_n)=1$. For convenience we set $N_n(t):=1$ for $t >\tilde \tau_n$. Then for $b=2, \ldots,n$ the numbers
\[\lambda(b)= \sum_{k=2}^b \binom bk \lambda_{b,k}   \]
give the jump rates of the process $N_n$ and 
\[ \mu(b)= \sum_{k=2}^b (k-1)\binom bk \lambda_{b,k} \]
its rate of decrease, since  a merger of $k$ lineages results in a downward jump of the block counting process of size $k-1$. These two sequences can be naturally extended to positive continuous functions $\lambda, \mu:[2,\infty)\to \mathbb R$, see the formulas \eqref{lambda} and \eqref{mu} below.

In this paper we focus on the class of $\Lambda$-coalescents without a dust component.  To put it briefly, in this case the rate at which, within the $n$-coalescents a single lineage merges with some others, diverges as the sample size $n$  tends to infinity. They are characterized by the condition (Pitman \cite{Pi})
\[ \int_{[0,1]} \frac{\Lambda(dp)} p = \infty \ . \]
In particular, they cover $\Lambda$-coalescents coming down from infinity. These are the coalescents with the property that the absorption times $\tilde \tau_n$ are bounded in probability uniformly in $n$. They are distinguished by the criterion (Schweinsberg \cite{Schw})
\[ \int_2^\infty \frac {dx}{\mu(x)}  < \infty \ . \]

We shall analyse the lengths of the whole $n$-coalescents as well as of different parts. They play an important role in the infinite sites model  introduced by Kimura \cite{Kim}. In this model each mutation effects a different site on the DNA. The mutations are distributed along the branches of the coalescent depending on their appearance in the past. Mathematically they build a homogeneous Poisson point process with rate $\theta>0$. Their total number $S(n)$ counts the segregating sites in the sample of size $n$ underlying the coalescent and are thus closely tied to its total length $\ell_n$. Mutations which are located on an external branch appear only once in the sample, these are the singleton polymorphisms (see Wakeley \cite{Wa}, page 103). 
Accordingly their number is linked to the total length $\widehat \ell_n$ of all external branches. Likewise the number of mutations visible repeatedly in the sample corresponds to the total internal length $\widecheck \ell_n$.  

\newpage

\paragraph{Notation.} For two sequences $A_n$ and $B_n$ of positive random variables we write $A_n \stackrel 1\sim B_n$ and $A_n \stackrel P\sim B_n$ if the sequence $A_n/B_n$ converges to 1 in the $L_1$-sense or in probability, respectively. The notation $A_n=O_P(B_n)$ means that the sequence $A_n/B_n$ is tight and $A_n=o_P(B_n)$ that $A_n/B_n$ converges to 0 in probability.

\mbox{}\\
Now we come to the main results of this paper. As a first notion let
\[ \ell_n := \int_0^{\tilde \tau_n} N_n(t)\, dt \ , \ n\ge 1 \ ,\]
be the total length of the coalescent tree.

\begin{Theorem} \label{Thm1}  Assume that the $\Lambda$-coalescent has no dust component. Then as $n \to \infty$
\begin{align} \ell_n \stackrel 1\sim  \int_2^n \frac x{\mu(x)} \, dx \ .  
\label{Theo1}
\end{align}
\end{Theorem}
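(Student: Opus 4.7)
My plan is to combine a first-moment computation via Dynkin's formula with a concentration bound, and then upgrade convergence in probability to $L^1$ convergence by a Scheffé-type argument, using $\ell_n\ge 0$. The organising principle is to find an approximate solution of the Poisson equation $\mathcal{L}g(b)\approx -b$ for the generator $\mathcal{L}$ of $N_n$, with the natural candidate $g(b):= \int_2^{b\vee 2} x/\mu(x)\,dx$, so that $g(n)$ equals the target $I_n := \int_2^n x/\mu(x)\,dx$.

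For the first moment, applying the generator of $N_n$ to $g$ yields
\[
\mathcal{L}g(b) \;=\; -\sum_{k=2}^b \binom{b}{k}\lambda_{b,k}\int_{b-k+1}^b \frac{x}{\mu(x)}\, dx.
\]
Linearising the inner integral as $(k-1)\cdot b/\mu(b)$ and plugging in the definition of $\mu$ gives the leading identity $\mathcal{L}g(b)\approx -b$. Dynkin's formula at the absorption time $\tilde\tau_n$ then reads
\[
g(n) \;=\; E\!\left[\int_0^{\tilde\tau_n}\bigl(-\mathcal{L}g\bigr)(N_n(s))\, ds\right] \;=\; E[\ell_n] + (\text{error}),
\]
so that $E[\ell_n]\sim I_n$ once the error term — a deterministic weighted average of the discretisation defects $R_{b,k}=\int_{b-k+1}^b x/\mu(x)\,dx - (k-1)b/\mu(b)$ against the rates $\lambda_{b,k}$ — is shown to be $o(I_n)$.

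For the concentration step, I would condition on the embedded discrete-time chain $(X_k)_{k\ge 0}$ with $X_0=n$, recording the post-jump states of $N_n$. Given the chain, $\ell_n=\sum_{k\ge 1}X_{k-1}T_k$ with holding times $T_k$ independent exponentials of rate $\lambda(X_{k-1})$, so the conditional mean is $L_n:=\sum_k X_{k-1}/\lambda(X_{k-1})$ and the conditional variance equals $\sum_k X_{k-1}^2/\lambda(X_{k-1})^2$. Using the identity $E[\sigma_k\mid X_{k-1}]=\mu(X_{k-1})/\lambda(X_{k-1})$, I can rewrite $L_n$ as a telescoping Riemann sum approximating $I_n$ along the jumps of the chain, with a martingale error $\sum_j (X_{j-1}/\mu(X_{j-1}))(\sigma_j-E[\sigma_j\mid X_{j-1}])$. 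Bounding this martingale in $L^2$, together with a bound on the conditional variance above, should yield $\ell_n/I_n\to 1$ in probability; Scheffé's lemma then promotes this to the claimed $L^1$ statement $\ell_n\stackrel{1}{\sim}I_n$.

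The main obstacle I anticipate is controlling the linearisation error $R_{b,k}$ uniformly. For coalescents with substantial mass of $\Lambda$ near $0$ — such as Bolthausen--Sznitman or Beta-coalescents with small $\alpha$ — the jumps $\sigma_k$ can be a constant fraction of $X_{k-1}$, so the naive approximation $\int_{b-k+1}^b x/\mu(x)\,dx\approx (k-1)b/\mu(b)$ is not pointwise accurate. The point to exploit is that $x/\mu(x)$ is slowly varying under the no-dust assumption $\int_{[0,1]}\Lambda(dp)/p=\infty$, so that the $\lambda_{b,k}$-weighted average of the errors should still be $o(b)$. Making this bound quantitative and uniform in $b\le n$ — rather than merely pointwise — appears to be the crux of the proof, and is presumably where the new technology for laws of large numbers of decreasing Markov chains advertised in the abstract enters the argument.
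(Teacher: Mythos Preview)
Your broad architecture---embedded chain, Riemann sums, martingale error, then Scheff\'e/Riesz to upgrade to $L^1$---matches the paper's. But the resolution you propose for the crux is wrong: $x/\mu(x)=1/\kappa(x)$ is \emph{not} slowly varying under the mere no-dust hypothesis. For a regularly varying coalescent with exponent $1<\alpha<2$ one has $\kappa(x)\sim c\,x^{\alpha-1}L(x)$, so $x/\mu(x)$ is regularly varying of index $1-\alpha<0$; for $\Lambda(\{0\})>0$ one even has $x/\mu(x)\sim 2/(\Lambda(\{0\})x)$. What \emph{is} available in full generality is only the doubling bound coming from Lemma~\ref{Lmm1}(i): $\kappa$ is increasing and $\kappa(x)/(x-1)$ decreasing, hence $1\le \kappa(x)/\kappa(y)\le (x-1)/(y-1)$ for $y\le x$. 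This gives control of $\int_{b-k+1}^{b}x/\mu(x)\,dx$ only when $k\le \eta b$; for $k$ comparable to $b$ the linearisation error is genuinely of order $b$, so an averaging argument over $\lambda_{b,k}$ cannot make the Dynkin remainder $o(b)$ pointwise.

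The paper's fix is not an averaging argument but a \emph{truncation-plus-splitting} device that you are missing. First, one shows (Lemma~\ref{Lmm3}(ii)) that if $\tilde\rho_n=\inf\{t:N_n(t)\le r_n\}\to 0$ in probability then $\mathbf P(\Delta_{i+1}>\eta X_i\text{ for some }i<\rho_n)\to 0$: in a short time window, relative jumps exceeding any fixed fraction $\eta$ simply do not occur. Combined with the no-dust fact $\mu_\eta(b)/\mu(b)\to 1$ (truncated expected jump carries full mass), this legitimises the Riemann approximation on $[0,\tilde\rho_n]$ and yields Proposition~\ref{Prp2}: $\int_0^{\tilde\rho_n}N_n(t)\,dt\stackrel{1}{\sim}\int_{r_n}^n x/\mu(x)\,dx$. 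Second, one \emph{constructs} $r_n$ with $\int_{r_n}^n dx/\mu(x)\to 0$ (forcing $\tilde\rho_n\to 0$) and simultaneously $\int_2^{r_n}x/\mu(x)\,dx=o(I_n)$, so the tail $\int_{\tilde\rho_n}^{\tilde\tau_n}N_n(t)\,dt$ is negligible in expectation by the second half of Proposition~\ref{Prp2}. Your single-pass approach tries to run the martingale all the way to $\tau_n$, where large relative jumps are unavoidable (and, for coalescents not coming down from infinity, $\tilde\tau_n$ does not even tend to $0$); the paper's two-scale decomposition is exactly what circumvents this.
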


\noindent
This is an intuitive approximation: if the counting process $N_n$ takes the value $x$, then there are currently $x$ lines and  $1/\mu(x)$, the reciprocal of the rate of decrease,  indicates how long on the average $N_n$  will stay close to  $x$.
Observe that the right-hand integral diverges for $n \to \infty$:  because of Lemma \ref{Lmm1} (i) below the function $\mu(x)/(x(x-1))$ is decreasing, thus $\int_2^n \frac x{\mu(x)} \, dx \ge \frac {2}{\mu(2)}\log (n-1)$. This lower bound is attained by the Kingman coalescent.

Berestycki et al \cite{Be2} conjectured  Theorem \ref{Thm1} and proved it for $\Lambda$-coalescents coming down from infinity in the framework of convergence in probability by using instead of $\mu(x)$ the asymptotically equivalent quantity
\[ \psi(x):= \int_{[0,1]} (e^{-xp}-1+xp) \frac{\Lambda(dp)}{p^2} \ .\]
For the Kingman coalescent the result was earlier obtained by Watterson  \cite{Wat} and for the Bolthausen-Sznitman coalescent by Drmota et al \cite{Dr}.

In the context of the infinite sites model Theorem \ref{Thm1} may be restated directly in terms of the number $S(n)$ of segregating sites as
\[ S(n) \stackrel 1\sim \theta \int_2^n \frac x{\mu(x)} \, dx \ ,  \]
due to the assumption that mutations appear according to a homogeneous Poisson point process with rate $\theta$.

\mbox{}\\ 
Next we define the total external length
\[ \widehat \ell_n := \int_0^{\tilde \tau_n} \widehat N_n(t)\, dt\ , \]
where $\widehat N_n(t)$ denotes the number of external branches extant at time $t$.  In contrast to the previous theorem the following result 
contains a statement only of convergence in probability (but see Theorem \ref{Thm4} below showing that, indeed, $L_1$-convergence holds under the stronger conditions of that theorem).

\begin{Theorem} \label{Thm2}  Suppose that the $\Lambda$-coalescent has no dust component. Then
as $n \to \infty$ 
\begin{align}  \widehat \ell_n \, \stackrel P\sim \, \frac{n^2}{\mu(n)}  \ .
\label{Theo2}
\end{align}
\end{Theorem}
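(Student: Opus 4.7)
My plan is to control the external-branch counting process $\widehat N_n$ via a compensating martingale. A fixed external lineage, while it is still present, is absorbed in a merger at rate
\[
r(b) \;:=\; \sum_{k=2}^{b} \binom{b-1}{k-1}\, \lambda_{b,k} \;=\; \int_{[0,1]} \frac{1-(1-p)^{b-1}}{p}\, \Lambda(dp)
\]
when $N_n = b$, which is finite since the coalescent has no dust. A short rewriting of $\sum_k k\binom{b}{k}\lambda_{b,k}$ yields the useful identity $b\, r(b) = \mu(b) + \lambda(b)$. Summing over the $\widehat N_n$ external lineages currently present, the process
\[
\widehat M_n(t) \;:=\; n - \widehat N_n(t) - \int_0^t \widehat N_n(s)\, r\bigl(N_n(s)\bigr)\, ds
\]
is a mean-zero martingale. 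Since $\widehat N_n(\tilde\tau_n) = 0$, evaluating at the absorption time gives the pivotal identity
\[
\int_0^{\tilde\tau_n} \widehat N_n(s)\, r\bigl(N_n(s)\bigr)\, ds \;=\; n - \widehat M_n(\tilde\tau_n).
\]

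My next step is to show $\widehat M_n(\tilde\tau_n) = o_P(n)$. The jumps of $\widehat N_n$ at a merger of size $k$ are hypergeometrically distributed with magnitude at most $k$, so a direct bound on the predictable quadratic variation in terms of $n$ and the rates $r,\mu$ should give $\mathrm{Var}(\widehat M_n(\tilde\tau_n)) = o(n^2)$, and hence $\int_0^{\tilde\tau_n} \widehat N_n(s)\, r(N_n(s))\, ds = n + o_P(n)$.

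The main work, and the main obstacle, is to translate this integral identity into the desired asymptotic for $\widehat\ell_n = \int \widehat N_n\, ds$. The underlying picture is that the pair $(N_n, \widehat N_n)$ tracks the deterministic system $dN/dt = -\mu(N)$, $d\widehat N/dt = -\widehat N\, r(N)$, so that when $N_n$ is in state $b$ one has $\widehat N_n \approx n\exp\bigl(-\int_b^n r(b')/\mu(b')\, db'\bigr)$. The change of variable from time to $b$ then reduces $\widehat\ell_n$ to $\int_2^n \widehat N_n(b)/\mu(b)\, db$, where $\widehat N_n(b)$ denotes the value of $\widehat N_n$ when $N_n$ first reaches $b$. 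A direct analysis of this integral confirms the claim in the known regimes: for the Bolthausen--Sznitman coalescent $\widehat N_n(b) \approx b$ and $\mu(b) \approx b\log b$ give $\widehat\ell_n \approx n/\log n \sim n^2/\mu(n)$, while for Kingman $\widehat N_n(b) \approx b^2/n$ and $\mu(b) \approx b^2/2$ give $\widehat\ell_n \to 2 \sim n^2/\mu(n)$. Making the deterministic trajectory approximation rigorous jointly for $N_n$ and $\widehat N_n$, with enough uniformity to handle the tail where $N_n$ becomes small, is the essential difficulty; this is precisely where the paper's \emph{law of large numbers for decreasing Markov chains} technique should do the work.
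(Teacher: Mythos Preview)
Your compensator identity is correct but weaker than you present it: since every one of the $n$ external branches is eventually absorbed, the statement $\int_0^{\tilde\tau_n}\widehat N_n\,r(N_n)\,ds = n + o_P(n)$ is essentially an accounting identity and carries almost no information about $\widehat\ell_n = \int \widehat N_n\,ds$. The factor $r(N_n(s))$ cannot be pulled out as $r(n)$, not even on the interval where $N_n\in[cn,n]$: for regularly varying coalescents with exponent $\alpha\in(1,2)$ one has $r(b)\sim \mathrm{const}\cdot b^{\alpha-1}$, so $r(cn)/r(n)\to c^{\alpha-1}\ne 1$. Thus the identity at $\tilde\tau_n$ is not the pivot; what is really needed is your \emph{trajectory} statement $\widehat N_n \approx n\exp\bigl(-\int_{N_n}^n r/\mu\bigr)$, and proving that is where all the content lies. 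Your proposal explicitly defers this step.

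The paper takes a different and more direct route. Rather than compensating $\widehat N_n$, it conditions on the whole block-counting chain and uses exchangeability to obtain an exact product formula (Lemma~\ref{Lmm4}): the conditional expected number of surviving external branches after $i$ mergers is $\tfrac{n}{n-1}(X_i-1)\prod_{m<i}(1-1/X_m)$. The product is then identified as $\kappa(X_i)/\kappa(n)$ via the uniform law of large numbers of Proposition~\ref{Prp3}, giving $\mathbf E[\widehat N_n\mid N_n]\approx \mu(X_i)/\kappa(n)$, after which Proposition~\ref{Prp2} integrates against time. A negative-correlation bound (the second part of Lemma~\ref{Lmm4}) handles the conditional variance. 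Your martingale could in principle reach the same trajectory---control $\widehat M_n$ uniformly in $t$ by Doob's inequality, then use a Gronwall argument on $n-\widehat N_n(t)=\int_0^t \widehat N_n\,r(N_n)\,ds+\widehat M_n(t)$---but you would still need Propositions~\ref{Prp2} and~\ref{Prp3} to evaluate $\int_0^t r(N_n)\,ds\approx \log\bigl(n\kappa(n)/(N_n(t)\kappa(N_n(t)))\bigr)$, and a separate argument for the tail beyond $\tilde\rho_n$. As written, the martingale step is the easy part and the substantive argument is missing.
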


\noindent
An intuitive explanation goes as follows: $\mu_n/n$ is the rate of decrease per individual at time 0. Therefore it is plausible that each external branch length is of order $n/\mu(n)$, and the external branch length in total results approximately in $n^2/\mu(n)$.
The sequence $\mu(n)/n^2$, $n \ge 1$, has the limit $\Lambda(\{0\})/2$, see formula \eqref{mu3} below. Thus the total external lengths diverge in probability if and only if $\Lambda(\{0\})=0$. We point out that $x/\mu(x)$ is a decreasing function (see Lemma \ref{Lmm1} (i) below), therefore the integral appearing in \eqref{Theo1} exceeds the corresponding term in \eqref{Theo2}. 

 The proof rests on a close relation between the functions $\lambda$ and $\mu$ which seems not to be observed until now. To describe it let us introduce another function $\kappa:[2,\infty)\to \mathbb R$ given by
\[  \kappa(x):= \frac{\mu(x)}{x} \ ,\]
which could be named the rate of decrease per capita. Then we have the approximation
\[ \lambda(x) \sim x^2 \kappa'(x)  \]
as $x \to \infty$ (see Lemma \ref{Lmm1} (ii) below).

In the special case of Beta-coalescents Theorem \ref{Thm2} follows from \cite{Da} and \cite{DhMoe}. For $\Lambda$-coalescents with a dust component the picture is rather different. Then $\ell_n/n$ as well as $\widehat \ell_n/n$ converge in distribution to one and the same non-degenerate limit law, see \cite{Moe}. Fluctuation results on the total length or the total external length of $\Lambda$-coalescents with no dust component are known only in special cases \cite{Da,Dr,Ja,Ke,Ta}.

Theorem \ref{Thm2} again allows a reformulation in terms of the infinite site  model. Letting $M_1(n)$ be the number of singletons, we obtain for $\Lambda(\{0\})=0$
\[ M_1(n) \stackrel P \sim \theta \frac{n^2}{\mu(n)} \ , \] 
whereas for $\Lambda(\{0\})>0$ it follows that $M_1(n)$ is asymptotically Poisson with parameter $2\theta/\Lambda(\{0\})$.

\paragraph{Remark.} From our proofs we will gain further insight in the structure of the coalescents. Let for $0<c<1$
\[ \tilde \rho_n:=\inf \{t \ge 0: N_n(t) \le cn \} \]
be the first moment, when the number of lineages falls below $cn$, and let
\[ \ell_n^* := \int_0^{\tilde \rho_n} N_n(t)\, dt \ , \  \widehat \ell_n^* := \int_0^{\tilde \rho_n} \widehat N_n(t)\, dt\]
be the respective lengths up to this moment. Then from \eqref{totLaenge} and \eqref{remark} below we have
\[  \ell_n^* \stackrel 1\sim \int_{cn}^n \frac x{\mu(x)}\, dx \ \text{ and }\ \widehat \ell_n^* \stackrel P\sim (1-c)\frac{n^2}{\mu(n)} \ . \]
The picture gives an illustration. 
\begin{center}
\includegraphics[height=3cm]{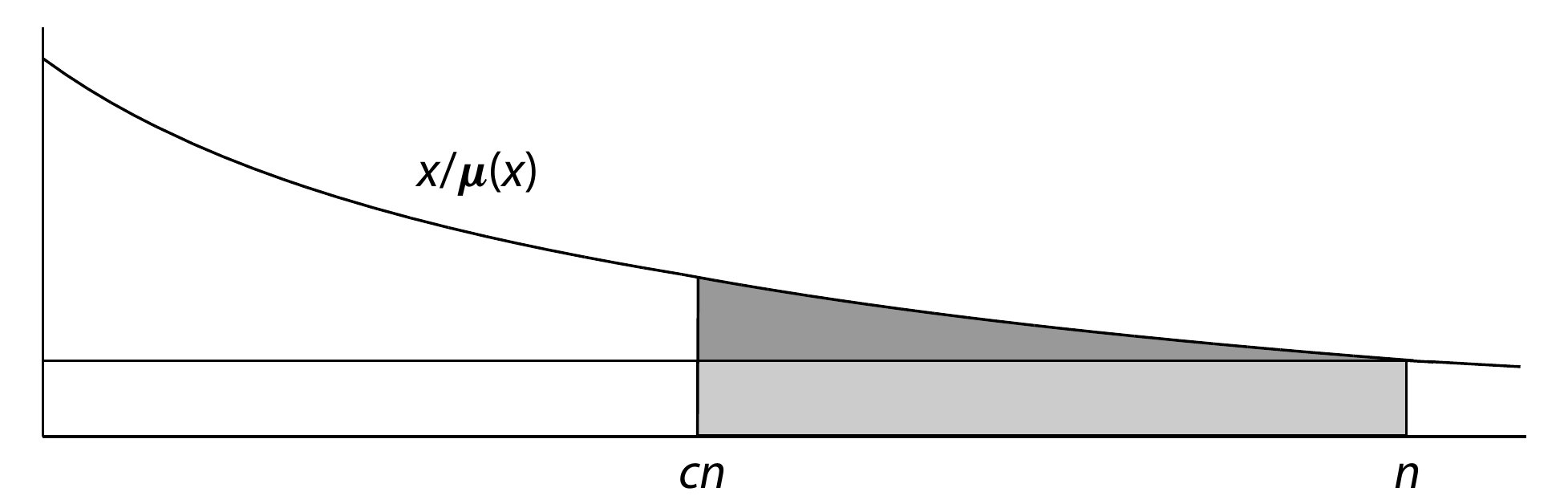}
\end{center}
$\ell_n^* $ and $\widehat \ell_n^*$ are the areas of the total grey region and of the lighter part, respectively.\qed

\mbox{}\\
It is tempting to expect an analogous result for the total internal length
\[ \widecheck \ell_n := \ell_n - \widehat \ell_n \ . \]
This is certainly true in cases where the total external length $\widehat \ell_n$ does not exceed the  total internal length $\widecheck \ell_n$, then we have
\[ \widecheck \ell_n \stackrel P\sim \int_{2}^n  \frac x{\mu(x)}    \, dx- \frac{n^2}{\mu(n)} \sim    \int_{2}^n \Big( \frac x{\mu(x)} - \frac n{\mu(n)}\Big) \, dx \ . \]
To provide some examples let us introduce the following class of coalescents.

\paragraph{Definition.} {\em We call the $\Lambda$-coalescent regularly varying with exponent $0\le \alpha \le 2$ if $\Lambda(\{0\})=0$ and if as $y \to 0$
\[ \int_{(y,1]} \frac{\Lambda (dp)}{p^2} \sim y^{-\alpha} L\Big(\frac 1y\Big) \]
with  a function $L$ slowly varying  at infinity.}

\mbox{}\\
This generalises the notion of Berestycki et al \cite{Be2} of a strong regularly varying coalescent.

\paragraph{Examples.} (i) The Kingman case: If $\Lambda(\{0\}) >0$, then $\mu(x)/x^2 \sim \Lambda(\{0\})/2$, thus
\[ \ell_n \stackrel 1\sim \frac {2 \log n} {\Lambda(\{0\})} \,  \quad \text{ and } \quad \widehat \ell_n \stackrel P\sim \frac 2{\Lambda(\{0\})} \]
as $n \to \infty$. Here the internal total length completely dominates the external ones, and we have
\[ \widecheck \ell_n \stackrel 1\sim \frac {2 \log n}{\Lambda(\{0\})}  \ . \]

(ii) Regularly varying coalescents with exponent $1< \alpha < 2$ come down from infinity as can be easily checked by the above criterion. They
fulfil (see Lemma \ref{Lmm2} (ii) below)
\[ \mu(x) \sim \frac {\Gamma(2-\alpha)}{\alpha-1} x^\alpha L(x)  \]
 as $x \to \infty$. Hence (having in mind how to integrate regularly varying functions, see Thm. 1 (b), Section VIII.9 in Feller \cite{Fe})
\[ \ell_n \stackrel 1\sim \frac{\alpha-1}{(2-\alpha)\Gamma(2-\alpha)}  \frac{n^{2-\alpha}}{L(n)} \ \text{ and }\ \widehat \ell_n \stackrel P\sim \frac{\alpha-1}{\Gamma(2-\alpha)}\frac{n^{2-\alpha}}{L(n)}\]
and consequently
\[\widecheck\ell_n \stackrel P\sim \frac{(\alpha-1)^2}{(2-\alpha)\Gamma(2-\alpha)}  \frac{n^{2-\alpha}}{L(n)}\stackrel P\sim \frac {\alpha-1}{2-\alpha} \widehat \ell_n \ .\]
Here the external and the internal length are of the same order. As an application, one may use the quantity $2- \hat \ell_n/\ell_n$ as an estimator for the parameter $\alpha$. In the special case of Beta($2-\alpha,\alpha$)-coalescents this was already discussed in more detail in Example 9 of \cite{KeSchwWa}. \qed

\mbox{}\\
In some other cases $\widecheck \ell_n$ is of smaller order than $\widehat \ell_n$. Then a large part of internal length will be located close to the coalescent tree's root, where  extremely large  mergers may take over (which is definitely  the case for coalescents not coming down from infinity). Still the internal length may obey the  law of large numbers suggested above.  The following theorem presents a situation of special interest. Define for a slowly varying function $L$ and for $x \ge 1$
\[ L^*(x):= \int_1^x \frac {L(y)}y\, dy \ .\]

\begin{Theorem} \label{Thm3} Assume that the $\Lambda$-coalescent has no dust component and is regularly varying with exponent $\alpha=1$. 
Then
\[\widecheck \ell_n\, \stackrel 1\sim  \int_{2}^n \Big( \frac x{\mu(x)}- \frac n{\mu(n)} \Big) \, dx\]
and
\[   \int_{2}^n \Big( \frac x{\mu(x)}- \frac n{\mu(n)} \Big) \, dx\sim  \frac{nL(n)}{L^*(n)^2}    \]
 as $n \to \infty$.
 \end{Theorem}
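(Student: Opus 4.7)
The plan is to prove the two equivalences separately. For the analytic equivalence, set $I_n:=\int_2^n(x/\mu(x)-n/\mu(n))\,dx$. I would use the asymptotic $\mu(x)\sim xL^*(x)$ for regularly varying coalescents with exponent $\alpha=1$, which Lemma \ref{Lmm2} should supply in this borderline case (mirroring the $\alpha>1$ formula in Example (ii)). This reduces $I_n$ to $\int_2^n(1/L^*(x)-1/L^*(n))\,dx$ up to negligible corrections coming from the uniform error in $x/\mu(x)\sim 1/L^*(x)$. Since $(L^*)'(x)=L(x)/x$, integration by parts turns this into $\int_2^n L(x)/L^*(x)^2\,dx+O(1/L^*(n))$. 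Because $L/L^{*2}$ is slowly varying and $L^*\to\infty$, Karamata's theorem gives $\int_2^n L(x)/L^*(x)^2\,dx\sim nL(n)/L^*(n)^2$, which dominates the $O(1/L^*(n))$ remainder.

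For $\widecheck\ell_n\stackrel 1\sim I_n$, I would start from $\widecheck\ell_n=\ell_n-\widehat\ell_n$ and apply Theorem \ref{Thm1} together with the $L_1$-version of Theorem \ref{Thm2} furnished by Theorem \ref{Thm4}, whose hypotheses cover the $\alpha=1$ regularly varying case. The chief difficulty is that $\ell_n$ and $\widehat\ell_n$ are individually of order $n/L^*(n)$, whereas $I_n$ is only of order $nL(n)/L^*(n)^2$. A direct subtraction of the ratio-form $L_1$ statements of Theorems \ref{Thm1} and \ref{Thm2} leaves an error of order $n/L^*(n)$, which swamps the target $I_n$. Hence one must sharpen those theorems under the standing hypothesis to absolute-error form,
\[ \ell_n=\int_2^n \frac{x}{\mu(x)}\,dx+o_{L_1}(I_n),\qquad \widehat\ell_n=\frac{n^2}{\mu(n)}+o_{L_1}(I_n). \]
Given these refinements, subtracting and observing that $n/\mu(n)\sim 1/L^*(n)=o(I_n)$ --- which holds since $nL(n)/L^*(n)\to\infty$ for slowly varying $L$ --- yields $\widecheck\ell_n=I_n+o_{L_1}(I_n)$, as required.

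The hard part is establishing the sharpened absolute-error bounds above. My approach would follow the Markov chain technique announced in the abstract: write $\ell_n$ and $\widehat\ell_n$ as sums over the jumps of the decreasing chain $N_n$, decompose each sum into a predictable Riemann-type part (approximating the relevant deterministic integral) and a zero-mean martingale part, and then control each separately. The Riemann-sum error is handled via the slow variation and monotonicity of $x/\mu(x)\sim 1/L^*(x)$, using Lemma \ref{Lmm1}(i). The martingale part demands a quadratic-variation bound of size $o(I_n^2)$ in $L^2$; this is where the identity $\lambda(x)\sim x^2\kappa'(x)$ of Lemma \ref{Lmm1}(ii) and the regular variation of $\Lambda$ at $0$ must combine to show that the per-jump fluctuations are small relative to $I_n$. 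Achieving both refinements at the required sharpness for $\widehat\ell_n$ in particular (where the external-branch dynamics are more delicate than for $\ell_n$) is the main obstacle.
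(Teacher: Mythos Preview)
Your diagnosis is correct in both parts, but the proposed remedies are not what the paper does, and in one place your plan has the same flaw you correctly identify elsewhere.

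\textbf{Analytic equivalence.} Replacing $x/\mu(x)=1/\kappa(x)$ by $1/L^*(x)$ \emph{before} subtracting is precisely the cancellation trap you flag for the probabilistic half: since $1/\kappa(x)-1/L^*(x)=o(1/L^*(x))$ pointwise, the integrated error is only $o(n/L^*(n))$, which dominates $I_n\sim nL(n)/L^*(n)^2$. The paper avoids this by differentiating first and approximating afterwards: write $\frac{x}{\mu(x)}-\frac{n}{\mu(n)}=\int_x^n \frac{\kappa'(y)}{\kappa(y)^2}\,dy$, apply Fubini to get $\int_2^n (y-2)\,\kappa'(y)/\kappa(y)^2\,dy$, and only then use $\kappa'(y)/\kappa(y)^2\sim \lambda(y)/\mu(y)^2\sim L(y)/(yL^*(y)^2)$ (Lemmas~\ref{Lmm1}(ii) and~\ref{Lmm2}). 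At this stage the integrand is nonnegative and the integral diverges, so the asymptotic passes through and Karamata gives $nL(n)/L^*(n)^2$. Your integration-by-parts would work too, provided you postpone the replacement $\kappa\to L^*$ until after the parts step.

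\textbf{Probabilistic part.} The paper does not sharpen Theorems~\ref{Thm1} and~\ref{Thm2} separately to absolute error $o_{L_1}(I_n)$; indeed, refining $\widehat\ell_n$ to that precision is no easier than proving Theorem~\ref{Thm3} outright. Instead the paper attacks $\widecheck\ell_n$ directly through its conditional expectation given $N_n$, using Lemma~\ref{Lmm4}'s exact formula $\mathbf P(\zeta_k\ge i\mid N_n)=\tfrac{X_i-1}{n-1}\prod_{m<i}(1-1/X_m)$. This yields
\[
\mathbf E[\widecheck\ell_n\mid N_n]=\sum_i W_i\Big(X_i-\tfrac{n}{n-1}(X_i-1)\prod_{m<i}\big(1-\tfrac1{X_m}\big)\Big),
\]
where the cancellation is already built in. For the \emph{upper bound} one uses $\prod(1-1/X_m)\ge 1-\sum_m 1/X_m$ and bounds $\mathbf E\big[\sum_i W_iX_i\sum_{m<i}1/X_m\big]$ by a double integral via the Markov property, giving $\mathbf E[\widecheck\ell_n]\le (1+o(1))\,nL(n)/L^*(n)^2$. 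For the \emph{lower bound} the paper discretises the range of $X_i$ into $h$ blocks $[(g-1)n/h,\,gn/h]$ and on each block uses Proposition~\ref{Prp3} to show $\prod_{m<i}(1-1/X_m)\approx \kappa(X_i)/\kappa(n)\approx 1-\tfrac{L(n)}{L^*(n)}\log\tfrac hg$, combined with Proposition~\ref{Prp2} for $\sum_i W_iX_i$ on the block; summing in $g$ and letting $h\to\infty$ recovers the constant~$1$. Matching upper and lower bounds plus the Riesz criterion then gives $L_1$-convergence. The fluctuation $\widecheck\ell_n^*-\mathbf E[\widecheck\ell_n^*\mid N_n]$ is controlled by the negative-correlation half of Lemma~\ref{Lmm4}, which yields the bound $o_P(n/\sqrt{\mu(n)})=o_P(\sqrt n)$ --- comfortably below $I_n$. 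Your martingale/Riemann-sum plan is not wrong in spirit, but the key missing ingredient is this explicit combinatorial formula for $\mathbf E[\widehat\ell_n\mid N_n]$, which makes the second-order term visible without separately refining the two first-order laws.
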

 
\noindent
Below we prove (Lemma \ref{Lmm2} (ii)) that the function  $L^*$ is slowly varying at infinity, too, and that $L(x)=o(L^*(x))$ as $x\to \infty$. In comparison with Theorem \ref{Thm2} we see that $\widecheck \ell_n= o_P(\widehat \ell_n)$ for regularly varying coalescents with exponent 1. 
In particular, for the Bolthausen-Sznitman coalescent  Theorems \ref{Thm2} and \ref{Thm3} yield
\[ \widehat \ell_n \stackrel P\sim \frac n{\log n} \quad \text{and} \quad \widecheck \ell_n \stackrel 1\sim \frac n{\log^2 n} \ . \]
These approximations were already obtained in Dhersin and M\"ohle \cite{DhMoe} and Kersting, Pardo and Siri-J\'egousse  \cite{KePa}, respectively.

\mbox{}\\
Our last object of investigation concerns lengths of higher order. A  branch of order $a \ge 2$ is by definition an internal branch carrying a subtree with $a$ leaves out of the original sample. In this context we consider external branches as branches of order 1. Denote  the number of  branches of order $a\ge 1$ present at time $t\ge 0$ as $\widehat  N_{n,a}(t)$, notably $\widehat N_{n,1}(t)=\widehat N_n(t)$.  Then the total length of all these branches   is given by
\[ \widehat \ell_{n,a} = \int_0^{\tilde \tau_n} \widehat N_{n,a}(t)\, dt \ , \ a \ge 1\ . \]

\newpage

\begin{Theorem} \label{Thm4} Suppose that the $\Lambda$-coalescent has no dust component and is regularly varying with exponent $\alpha=1$. Then
\[ \widehat \ell_{n,1} \stackrel 1\sim \frac n{L^*(n)}\ , \]
whereas for $a \ge 2$
\[ \widehat \ell_{n,a} \stackrel 1\sim \frac 1{(a-1)a}\frac {nL(n)}{L^*(n)^2}  \]
as $n \to \infty$.
\end{Theorem}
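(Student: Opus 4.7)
The proof splits naturally into the cases $a=1$ and $a\ge 2$.

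\emph{Case $a=1$.} Theorem \ref{Thm2} together with Lemma \ref{Lmm2}(ii), which supplies $\mu(n)\sim nL^*(n)$ under $\alpha=1$, already yields $\widehat\ell_{n,1}\stackrel{P}{\sim} n/L^*(n)$. To upgrade to $L^1$, I would exploit exchangeability: $\widehat\ell_{n,1}=\sum_{i=1}^n \sigma_i$, where $\sigma_i$ is the first merger time involving lineage $i$, so $E[\widehat\ell_{n,1}]=nE[\sigma_1]$ by Fubini. The per-lineage merger rate at state $b$ is $(\mu(b)+\lambda(b))/b\sim L^*(b)$, giving $E[\sigma_1]\sim 1/L^*(n)$ by a comparison with an exponential clock of rate $L^*(n)$. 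A second-moment bound of the form $\mathrm{Var}(\widehat\ell_{n,1})\le n\,\mathrm{Var}(\sigma_1)+n^2\mathrm{Cov}(\sigma_1,\sigma_2)$, in which the pair dependence is controlled through the 2-merger rate $\lambda_{b,2}$, then supplies uniform integrability and hence the $L^1$ conclusion.

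\emph{Case $a\ge 2$: key rate and localisation.} The combinatorial core is the asymptotic
\[ \binom{b}{a}\lambda_{b,a}\;\sim\;\frac{bL(b)}{a(a-1)},\qquad b\to\infty, \]
which I would prove by the substitution $u=bp$ in $\lambda_{b,a}=\int p^{a-2}(1-p)^{b-a}\,\Lambda(dp)$: the factor $(1-p)^{b-a}$ localises to $p\asymp 1/b$, the regular-variation tail $\int_{(y,1]}\Lambda(dp)/p^2\sim y^{-1}L(1/y)$ pins down the $\Lambda$-mass there, and evaluation produces the Gamma integral $\int_0^\infty u^{a-2}e^{-u}\,du=\Gamma(a-1)$, whose combination with $\binom{b}{a}\sim b^a/a!$ yields the constant $(a-2)!/a!=1/(a(a-1))$. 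By Theorem \ref{Thm3} the bulk of $\widecheck\ell_n\ge\widehat\ell_{n,a}$ accumulates while $N_n(t)\in[cn,n]$ (with $c\uparrow 1$ after $n\to\infty$), and in that top regime at least $cn$ lineages remain external, so to leading order order-$a$ branches are created only by $a$-mergers of $a$ external branches, at rate $\binom{b}{a}\lambda_{b,a}\bigl(\widehat N_{n,1}/b\bigr)^a\sim bL(b)/(a(a-1))$; each existing order-$a$ branch is absorbed at rate $(\mu(b)+\lambda(b))/b\sim L^*(b)$. Competing creation channels (smaller mergers with at least one higher-order participant) carry an extra factor of the non-external density $\widehat N_{n,\ge 2}/N_n=O(L(b)/L^*(b))=o(1)$ and are therefore subleading.

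\emph{Fluid limit and concentration.} The mean $m(t)=E[\widehat N_{n,a}(t)]$ then approximately satisfies the birth--death ODE
\[ m'(t)\;\approx\;\frac{N_n(t)L(N_n(t))}{a(a-1)}-L^*(N_n(t))\,m(t). \]
Under the substitution $s=\int_0^t L^*(N_n(r))\,dr$, which makes $N_n(s)\approx ne^{-s}$ since $\mu\sim bL^*$, this reduces to $dm/ds+m\approx Ce^{-s}$ with $C=nL(n)/(a(a-1)L^*(n))$; starting from $m(0)=0$ the solution is $m(s)=Cse^{-s}$. Pulling back to real time with $dt\approx ds/L^*(n)$ in the relevant window and using $\int_0^\infty se^{-s}\,ds=1$ gives
\[ E[\widehat\ell_{n,a}]\;\sim\;\frac{C}{L^*(n)}\;=\;\frac{nL(n)}{a(a-1)L^*(n)^2}. \]
Finally, $L^1$-convergence follows from a variance estimate built on the compensated Poisson martingales attached to the creation and destruction point processes, whose jump magnitudes are $1$ and whose total intensities are integrable against the time substitution above. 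The main obstacle is the fluid-limit justification itself: because creation and relaxation share the timescale $1/L^*(n)$, the chain $\widehat N_{n,a}$ never actually reaches quasi-stationarity, so the ODE approximation must be propagated uniformly across the window $[cn,n]$. This is precisely the kind of situation the authors' new law-of-large-numbers machinery for decreasing Markov chains (announced in the abstract) appears designed to address, and I would expect that technique to supply the uniform comparison needed here.
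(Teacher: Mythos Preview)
Your heuristics are correct and the final answer matches, but the argument as written is incomplete, and the paper's route is quite different from yours.

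For $a=1$: the paper's $L^1$ upgrade is much simpler than your exchangeability/variance plan. From Theorem~\ref{Thm1} and Lemma~\ref{Lmm2}(ii) one has $\ell_n\stackrel{1}{\sim} n/L^*(n)$ with the \emph{same} asymptotic as $\widehat\ell_{n,1}$; since $\widehat\ell_{n,1}\le\ell_n$, the sequence $L^*(n)\widehat\ell_{n,1}/n$ inherits uniform integrability from $L^*(n)\ell_n/n$, and convergence in probability upgrades to $L^1$ for free.

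For $a\ge 2$: your birth--death ODE picture is sound at the heuristic level (the creation rate $\binom{b}{a}\lambda_{b,a}\sim bL(b)/(a(a-1))$ is Lemma~\ref{Lmm2}(i) at $\alpha=1$, and the time substitution is correct), but the paper does not attempt a fluid limit. Instead it argues asymmetrically. For the \emph{lower} bound it restricts to the subcount $Y_{i,a}'\le Y_{i,a}$ of order-$a$ branches arising from a single $a$-merger of externals, computes $\mathbf P(E_{i,k,A}\mid N_n)$ in closed form (Lemma~\ref{Lmm5}), and controls the conditional variance via pairwise near-independence. The \emph{upper} bound is then obtained without any direct analysis of $\widehat\ell_{n,a}$ from above: one uses the identity $\widecheck\ell_n=\sum_{b\ge 2}\widehat\ell_{n,b}$ together with Theorem~\ref{Thm3}. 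Since $\sum_{b\ge 2}\frac{1}{(b-1)b}=1$ exactly, once all the lower bounds are in place there is no mass left over, and each lower bound must be sharp. The $L^1$ upgrade again goes by domination, now by $\widecheck\ell_n$.

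The genuine gap in your proposal is the one you name yourself: the fluid-limit justification. Your expectation that the authors' law-of-large-numbers machinery supplies it is misplaced --- Propositions~\ref{Prp1}--\ref{Prp3} treat functionals $\sum f(X_i)$ of the block-counting chain alone, whereas $\widehat N_{n,a}(t)$ is history-dependent and is not of this form. The paper sidesteps this by conditioning on the whole path $N_n$ and working with the explicit combinatorics of Lemma~\ref{Lmm5}, which reduces everything to sums of indicators with tractable conditional first and second moments; Propositions~\ref{Prp2} and~\ref{Prp3} then enter only to evaluate the resulting sums $\sum W_iX_i$ and $\sum 1/X_m$. Your ODE route could in principle be made rigorous, but it would require a separate concentration argument for $\widehat N_{n,a}$ that the paper's tools do not provide, and the variance bound you sketch via ``compensated Poisson martingales'' does not obviously control the coupling between $\widehat N_{n,a}$ and the random, non-Markovian (in its own filtration) environment $N_n$.
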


\noindent
These formulas have interesting applications to the site frequency spectrum. It consists of the counts $M_a(n)$, $a\ge 1$, specifying the   numbers of mutations located on branches of order $a$, which can be distinguished in the various DNAs of the sample. The theorem yields
\[ M_1(n) \stackrel 1\sim \theta \frac n{L^*(n)} \quad \text{and}\quad M_a(n) \stackrel 1\sim \frac{\theta}{(a-1)a} \frac {nL(n)}{L^*(n)^2} \ , \ a\ge 2 \ . \]
A corresponding result in the so-called infinite allele model  was obtained by Basdevant and Goldschmidt \cite{Ba}. They deal   with the allele frequency spectrum and  consider the special case of the Bolthausen-Sznitman coalescent. Analogue results for Beta-coalescents coming down from infinity  were presented by Berestycki, Berestycki and Schweinsberg \cite{Be3} and more generally for strongly regular varying coalescents with exponent $1<\alpha<2$ by Berestycki, Berestycki and Limic \cite{Be2}. We conjecture that these results can be further extended to regular varying coalescents. 
Computational procedures for the general site frequency spectrum are due to Spence et al \cite{Sp}.

Theorem \ref{Thm4} illustrates that mutations show mainly up on external branches, whereas the tree structure of the coalescent becomes visible only at branches of higher order. This reflects that for regularly varying coalescents with exponent $\alpha=1$ mergers occur preferentially at a late time and  close to the MRCA.
Our proof will show that with probability asymptotically equal to 1 any mutation seen in exactly $a\ge 2$ individuals stems from an internal branch of order $a$ arose from one single merger. This  is  similar to  findings of Basdevant and Goldschmidt.

\mbox{}\\
Closing this introduction, we briefly discuss our methods of proof, which differ from other approaches in the literature. They rest upon $L_2$-considerations and elementary martingale estimates, and they may well find further applications as  indicated in two examples below. These methods deal with the time-discrete Markov chain $n=X_0>X_1> \ldots > X_{\tau_n}=1$, embedded in the Markov process $N_n$ (or more generally with decreasing Markov chains). Let
\[ \Delta_i := X_{i-1}-X_i \ , \ i\ge 1 \  ,\]
denote its downwards jump size resulting from the $i$-th merger.
We shall present different laws of large numbers for expressions of the form $\sum_{i=0}^{\rho_n-1} f(X_i)$, with some function $f:[2,\infty)\to  \mathbb R$ and with stopping times $\rho_n$ of the form
\[ \rho_n:= \min \{i\ge 0: X_i \le r_n\} \ ,\] 
where $r_n$, $n \ge 1$, is some sequence of positive numbers.

Our approach specifies the following intuition: 
\begin{align} \label{intuition} \sum_{i=0}^{\rho_n-1} f(X_i) \approx \sum_{i=0}^{\rho_n-1} f(X_i)\frac{\Delta_{i+1}}{\nu(X_i)} \approx \int_{r_n}^n f(x)\, \frac{dx}{\nu(x)}  
\end{align}
with
\[ \nu(x):= \frac{\mu(x)}{\lambda(x)}  \ , \ x \ge 2 \ , \]
thus
\[ \nu(b)= \sum_{k=2}^b (k-1) \binom bk \frac{\lambda_{b,k}}{\lambda(b)}=\sum_{k=2}^b (k-1) \mathbf P(\Delta_{i+1}=k-1\mid X_i=b)   \]
or
\[ \nu(b)=  \mathbf E[\Delta_{i+1} \mid X_i=b] \ ,\ b \ge 2 \ . \]
The rationale behind this intuition is that the differences of both sums in \eqref{intuition} are small, because they stem from a martingale, whereas the second sum may be considered as a Riemann approximation of the right-hand integral. This latter approximation requires that the jump sizes $\Delta_{i+1}$ are small compared to the values $X_i$ of the Markov chain and is only ensured if the  time
\[ \tilde \rho_n := \inf \{ t\ge 0: N_n(t)=X_{\rho_n} \}= \inf \{ t \ge 0: N_n(t)\le r_n \} \]
of entrance into the interval $[1,r_n]$  converges to 0 in probability. (Observe that these random times generalise the notion used in the above remark.) Thus we strive towards small time approximations.

The quadratic variation of the just mentioned martingale will be estimated along the following lines: Since  $\Delta_{i+1}=o_P(X_i)$ in the range of the small time approximation, we have
\begin{align*} \sum_{i=0}^{\rho_n-1} f(X_i)^2 \frac {\Delta_{i+1}^2}{\nu(X_i)^2} = o_P\Big(\sum_{i=0}^{\rho_n-1} f(X_i)^2 \frac{X_i\Delta_{i+1}}{\nu(X_i)^2}\Big) = o_P\Big( \max_{r_n \le x \le n} \frac{xf(x)}{\nu(x)} \sum_{i=0}^{\rho_n-1} f(X_i) \frac{\Delta_{i+1}}{\nu(X_i)} \Big)\ .
\end{align*}
Under suitable conditions not only the right-hand sum but also the maximum is of order $\int_{r_n}^n f(x) \frac{dx}{\nu(x)}$ resulting in
\[ \sum_{i=0}^{\rho_n-1} f(X_i)^2 \frac {\Delta_{i+1}^2}{\nu(X_i)^2}  = o_p\left( \Big(\int_{r_n}^n f(x) \frac{dx}{\nu(x)}\Big)^2\right) \ . \]
According to this pattern we may control the martingale's quadratic variation and its fluctuations.

The paper is organized as follows: Section 2 deals in detail with the rate functions. Section~3 contains our general laws of large numbers. Finally, our theorems are proved in Sections 4 to 7.

\section{Properties of the rate functions}

We extend the above sequences $\lambda(b)$ and $\mu(b)$, $b\ge 2$, of rates    to positive continuous functions $\lambda,\mu:[2,\infty)\to \mathbb R$  via the definitions
\begin{align} \lambda(x):&= \int_{[0,1]}\big(1-(1-p)^x-xp(1-p)^{x-1}\big)\, \frac{\Lambda(dp)}{p^2} \ , \label{lambda}\\
\mu(x) :&= \int_{[0,1]} (xp-1+(1-p)^x) \, \frac{\Lambda(dp)}{p^2} \ ,
\label{mu}
\end{align}
with $x \ge 2$. Note that $\mu(x)\ge \lambda(x)$ for all $x \ge 2$, since the integrands fulfil the corresponding inequality. Recall our notion
\[ \kappa(x) :=  \frac {\mu(x)}x  \ , \ x\ge 2 \ . \]

\begin{Lemma}\label{Lmm1} {\em (i)}  The functions $\lambda(x)$ and $\kappa(x)$, $x \ge  2$, are increasing in $x$, and the functions $\lambda(x)/(x(x-1))$ and $\kappa(x)/(x-1)$, $x \ge 2$, are decreasing.

{\em (ii)}   For any $0<\chi < 1$ as $x \to \infty$ 
\begin{align*} \lambda(x) = x^2 \kappa'(x)(1+ O(x^{-\chi})) \ .\end{align*}

{\em (iii)}   The  $\Lambda$-coalescent has no dust component if and only if
$\kappa(x)\to \infty $
 as $x \to \infty$ and then
$\kappa(x) = o(\lambda(x))$.
\end{Lemma}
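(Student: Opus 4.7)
For each monotonicity I would differentiate under the integral sign. For $\lambda$ increasing, the integrand $1-(1-p)^{x-1}(1+(x-1)p)$ has $x$-derivative $-(1-p)^{x-1}[\log(1-p)(1+(x-1)p)+p]$, which is non-negative thanks to $\log(1-p)\le -p$ (yielding $\log(1-p)(1+(x-1)p)+p\le -(x-1)p^{2}\le 0$). For $\kappa$ increasing, I rewrite its integrand as $p+((1-p)^x-1)/x$; with $u:=-x\log(1-p)\ge 0$ the $x$-derivative becomes $x^{-2}[1-e^{-u}(1+u)]\ge 0$ by $e^{u}\ge 1+u$. For the two decreasing monotonicities I derive the double-integral representations
\[
\frac{\lambda(x)}{x(x-1)}=\int_{[0,1]}\!\!\int_0^1 s(1-ps)^{x-2}\,ds\,\Lambda(dp),\qquad \frac{\kappa(x)}{x-1}=\int_{[0,1]}\!\!\int_0^1 (1-v)(1-pv)^{x-2}\,dv\,\Lambda(dp),
\]
obtained by iterating the identity $1-(1-p)^{x}=xp\int_0^1 (1-pt)^{x-1}\,dt$; in both the integrand $(1-p\cdot)^{x-2}$ is visibly decreasing in $x$.

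\textbf{Plan for part (ii).} The cornerstone is the exact identity
\[
x^2\kappa'(x)-\lambda(x)=x\int_{[0,1]}(1-p)^{x-1}\phi(p)\,\frac{\Lambda(dp)}{p^2},\qquad \phi(p):=(1-p)\log(1-p)+p,
\]
which follows from $x^{2}\kappa'(x)=x\mu'(x)-\mu(x)$ by direct subtraction; expanding gives $\phi(p)=\sum_{k\ge 2}p^{k}/[k(k-1)]$, whence $p^{2}/2\le\phi(p)\le p^{2}$. The upper bound yields $x^{2}\kappa'(x)-\lambda(x)\le x\int(1-p)^{x-1}\Lambda(dp)$. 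The key technical step is the uniform pointwise estimate
\[
\frac{p^{2}(1-p)^{x-1}}{1-(1-p)^{x-1}(1+(x-1)p)}\le\frac{2}{(x-1)^{2}}\qquad(p\in(0,1),\;x\ge 2),
\]
which I would prove using $y:=(x-1)p$, the bound $(1-p)^{x-1}\le e^{-y}$, and the elementary inequality $y^{2}\le 2(e^{y}-1-y)$ (valid on $[0,\infty)$ as the ratio equals $2$ at $y=0$ and decreases). Integrating against $\Lambda(dp)$ then gives $x\int(1-p)^{x-1}\Lambda(dp)=O(\lambda(x)/x)$, hence $\lambda(x)=x^{2}\kappa'(x)(1+O(1/x))$, which implies the statement for every $\chi<1$.

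\textbf{Plan for part (iii).} Using $1-(1-p)^{x}=x\int_0^{p}(1-u)^{x-1}\,du$ and Fubini I derive
\[
\kappa(x)=\int_0^1\bigl[1-(1-u)^{x-1}\bigr]F(u)\,du,\qquad F(u):=\int_u^1\frac{\Lambda(dp)}{p^{2}},
\]
so monotone convergence yields $\kappa(x)\nearrow\int_0^1 F(u)\,du=\int_{[0,1]}\Lambda(dp)/p$, finite iff the coalescent has a dust component. For $\kappa=o(\lambda)$ I use the identity $\kappa(x)=T(x)-\lambda(x)/x$ with $T(x):=\int[1-(1-p)^{x-1}]\Lambda(dp)/p$ (from the easily checked $\mu(x)+\lambda(x)=xT(x)$) and split $T$ and $\lambda$ at $p_0=1/x$ using the crude pointwise bounds $[1-(1-p)^{x-1}]/p\le\min(x-1,1/p)$ and $[1-(1-p)^{x-1}(1+(x-1)p)]/p^{2}\ge c\min(x^{2},1/p^{2})$ to reduce matters to showing $G(u)/F(u)\to 0$ as $u\to 0$, where $G(u):=\int_u^1 F(s)\,ds$. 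The latter I establish by the elementary observation that for any $\epsilon\in(0,1)$ and $0<u<\epsilon$,
\[
G(u)\le G(\epsilon)+F(u)(\epsilon-u),\qquad\text{hence}\qquad \frac{G(u)}{F(u)}\le \epsilon+\frac{G(\epsilon)}{F(u)};
\]
since $F(u)\to\infty$ in the no-dust case, $\limsup_{u\to 0}G(u)/F(u)\le\epsilon$, and $\epsilon$ is arbitrary. The genuinely nontrivial parts of the proof are the uniform pointwise bound in (ii) and this $G=o(F)$ step in (iii); everything else reduces to direct manipulation of the defining integrals.
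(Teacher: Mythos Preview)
Your proof is correct, with one small caveat in (iii), and it deviates from the paper in instructive ways in parts (ii) and (iii).

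For (i) you and the paper do essentially the same thing; the paper writes the double integrals via partial integration (Fubini in the other order), obtaining e.g.\ $\lambda(x)/(x(x-1)) = \Lambda(\{0\})/2 + \int_0^1 y(1-y)^{x-2}\int_{(y,1]}p^{-2}\Lambda(dp)\,dy$, which is your formula after the substitution $y=ps$.

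For (ii) the paper also compares $\lambda(x)/(x(x-1))$ with $\kappa'(x)$, the difference being an integral involving $y-(1-y)\log\tfrac{1}{1-y}$, but then \emph{splits} at $y=x^{-\chi}$: on $[0,x^{-\chi}]$ the integrand is $O(y^2)$, giving an error $\le cx^{-\chi}\lambda(x)/(x(x-1))$; on $[x^{-\chi},1]$ the factor $(1-y)^{x-2}$ kills the integral super-polynomially. This yields only $O(x^{-\chi})$ for each fixed $\chi<1$. Your route---the exact identity $x^2\kappa'(x)-\lambda(x)=x\int(1-p)^{x-1}\phi(p)\,p^{-2}\Lambda(dp)$ together with the pointwise bound $p^2(1-p)^{x-1}\le \tfrac{2}{(x-1)^2}\bigl[1-(1-p)^{x-1}(1+(x-1)p)\bigr]$---avoids any splitting and delivers the sharper $O(1/x)$. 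The paper's argument is softer; yours is cleaner and gives more.

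For the second claim of (iii) the paper proceeds differently: it writes $\kappa(x)=\kappa(x_0)+\int_{x_0}^x\kappa'(y)\,dy$, invokes (ii) and the monotonicity of $\lambda$ to bound the integral by $2\lambda(x)/x_0$, and then sends $x_0\to\infty$. This is shorter because it recycles (ii). Your argument via $G=o(F)$ is self-contained, and the $\epsilon$-trick is morally the same ``send the cutoff away'' idea. One caveat: your formula $\kappa(x)=\int_0^1[1-(1-u)^{x-1}]F(u)\,du$ and the assertion $F(u)\to\infty$ silently drop the atom $\Lambda(\{0\})$. If $\Lambda(\{0\})>0$ then $\kappa(x)$ carries the extra term $\tfrac{x-1}{2}\Lambda(\{0\})$ (so $\kappa(x)\to\infty$ trivially) while $\lambda(x)\ge\tfrac{x(x-1)}{2}\Lambda(\{0\})$; combined with $\kappa(x)\le(x-1)\kappa(2)$ from (i) this gives $\kappa/\lambda=O(1/x)$ immediately. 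You should add this one-line case distinction.
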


\begin{proof}
(i)  The function $(1-p)^x+ xp(1-p)^{x-1}= (1-p)^{x-1}(1+(x-1)p)$, $x \ge 2$, respectively its logarithm, has a negative derivative. From \eqref{lambda} we  thus obtain  monotonicity of $\lambda(x)$.
Moreover, a partial integration yields
\begin{align} \frac{\lambda(x)}{x(x-1)}= \frac{\Lambda(\{0\})}2+ \int_0^1  y(1-y)^{x-2}\int_{(y,1]}\frac {\Lambda(dp)}{p^2}\, dy 
\label{lambda2}
\end{align}
implying that $\lambda(x)/(x(x-1))$ is decreasing for $x \ge 2$.

Similarly, from \eqref{mu} and a partial integration we have
\begin{align}\label{mu2} \kappa(x)= \frac {\mu(x)}x = \frac{x-1}2\, \Lambda(\{0\}) +\int_0^1 (1- (1-y)^{x-1} ) \int_{(y,1]}\frac {\Lambda(dp)}{p^2}\, dy \ ,\end{align}
which is increasing in $x$, and by another partial integration
\begin{align} \frac{\mu(x)}{x(x-1)} = \frac{\Lambda(\{0\})}2+ \int_0^1 (1-z)^{x-2} \int_z^1\int_{(y,1]} \frac {\Lambda(dp)}{p^2} \, dy\, dz \ ,  
\label{mu3}
\end{align}
a decreasing function in $x$.

(ii) From \eqref{mu2} we have
\begin{align}\kappa'(x)= \frac{\Lambda(\{0\})}2+ \int_0^1 (1-y)^{x-1}\log \frac 1{1-y} \int_{(y,1]}\frac {\Lambda(dp)}{p^2}\, dy\ . 
\label{kappaprime}
\end{align}
From concavity $(1-y)\log \frac1{1-y} \le y$ for $y \in [0,1]$, also $(1-y)\log \frac 1{1-y} \sim y + O(y^2)$ for $y \to 0$. Thus, combining \eqref{lambda2} and \eqref{kappaprime}, for  $0<\chi <1$ and a suitable $c\ge 1$
\begin{align*}
0&\le \frac{\lambda(x)}{x(x-1)} - \kappa'(x) \\&\le c\int_0^{x^{-\chi}} y^2 (1-y)^{x-2} \int_{(y,1]}\frac {\Lambda(dp)}{p^2}\, dy +\int_{x^{-\chi}}^1  y(1-y)^{x-2}\int_{(y,1]}\frac {\Lambda(dp)}{p^2}\, dy\\
&\le c x^{-\chi} \int_0^{x^{-\chi}} y (1-y)^{x-2} \int_y^1\frac {\Lambda(dp)}{p^2}\, dy + (1-x^{-\chi})^{x-2} \int_{x^{-\chi}}^1  y\int_{(y,1]}\frac {\Lambda(dp)}{p^2}\, dy\\
&\le c x^{-\chi} \int_0^{1} y (1-y)^{x-2} \int_y^1\frac {\Lambda(dp)}{p^2}\, dy + e^{- x^{-\chi}(x-2)}\int_{0}^1  y\int_{(y,1]}\frac {\Lambda(dp)}{p^2}\, dy\\
&\le c x^{-\chi} \frac{\lambda(x)}{x(x-1)} + e^{2-x^{1-\chi}} \Lambda([0,1]) \ .
\end{align*} 
Since $\lambda$ is an increasing function, this implies
\[ \kappa'(x) = \frac{\lambda(x)}{x^2} + O\Big( \frac{\lambda(x)}{x^{2+\chi}} \Big)\ ,\]
which is equivalent to our claim.

(iii) From \eqref{mu} and the definition of $\kappa$ it follows by monotone convergence that
\begin{align*}
\kappa(x) \to \int_{[0,1]} \frac{\Lambda(dp)}p \ .
\end{align*}
This formula implies our first claim. As to the second one 
we have for  $2 \le x_0 \le x$
\[ \kappa(x) = \kappa(x_0) + \int_{x_0}^x \kappa'(y)\, dy \ . \]
From part (ii) and since $\lambda$ is increasing, if $x_0$ is sufficiently large
\[ \kappa(x) \le \kappa(x_0) +2 \int_{x_0}^x \frac{\lambda(y)}{y^2}\, dy \le \kappa(x_0) + 2  \lambda(x)\int_{x_0}^x \frac{dy}{y^2} \le \kappa(x_0) + 2 \frac{ \lambda(x)}{x_0}\ . \]
Thus, from $\kappa(x)\to \infty$ it follows that $\lambda(x)\to \infty$ as $x \to \infty$, and we
obtain for any $x_0 \ge 2$
\[ \limsup_{x \to \infty} \frac{\kappa(x)}{\lambda(x)} \le \frac 2{x_0} \ . \]
This implies our second claim.
\end{proof}

\noindent
First consequences of the previous results are contained in the next lemma.

\begin{Lemma}  \label{Lmm2} Let the $\Lambda$-coalescent be regularly varying with exponent
$\alpha$. 

{\em (i)} If $0 \le \alpha < 2$, then
\[ \lambda(x) \sim \Gamma(2-\alpha) x^\alpha L(x)  \]
as $x \to \infty$, and for $k \ge 2$
\[ \binom bk \frac {\lambda_{b,k}}{\lambda(b)} \to \frac{\alpha}{\Gamma(2-\alpha)} \frac{\Gamma(k-\alpha)}{k!} \]
as $b \to \infty$.

{\em (ii)} The $\Lambda$-coalescent has no dust component if and only if $\int_1^\infty x^{\alpha -2} L(x) \, dx=\infty$. Then $\alpha\ge 1$ and we have
\[ \kappa(x) \sim\begin{cases} \frac {\Gamma(2-\alpha)}{\alpha-1} x^{\alpha-1} L(x) & \text{for } 1<\alpha <2 \\  L^*(x) &\text{for }\alpha =1 \end{cases}  \]
as $x \to \infty$, with a  function $L^*$ given by
\[ L^*(x):= \int_1^x \frac{ L(y)}y \, dy\ , \ x \ge 1 \ . \]
$L^*$ is slowly varying at infinity and satisfies   $L(x) = o(L^*(x))$ as $x \to \infty$.
\end{Lemma}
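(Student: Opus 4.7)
The key simplification is to work with the tail function $G(y):=\int_{(y,1]}\Lambda(dp)/p^2$, under which the regular-variation hypothesis reads $G(y)\sim y^{-\alpha}L(1/y)$ as $y\to 0$. With $\Lambda(\{0\})=0$, identities \eqref{lambda2} and \eqref{mu2} become
\[\frac{\lambda(x)}{x(x-1)}=\int_0^1 y(1-y)^{x-2}G(y)\,dy,\qquad \kappa(x)=\int_0^1(1-(1-y)^{x-1})G(y)\,dy,\]
both of which are Laplace-type integrals amenable to the substitution $y=u/x$ combined with the uniform convergence theorem for slowly varying functions and Potter-type tail bounds for domination.

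\paragraph{Part (i).} Substituting $y=u/x$ in the $\lambda$-identity, the integrand $u(1-u/x)^{x-2}G(u/x)$ converges pointwise to $u^{1-\alpha}e^{-u}\cdot x^\alpha L(x)$, and dominated convergence yields $\lambda(x)\sim x^\alpha L(x)\int_0^\infty u^{1-\alpha}e^{-u}\,du=\Gamma(2-\alpha)\,x^\alpha L(x)$. For the second claim, set $h_{b,k}(p):=\binom{b}{k}p^k(1-p)^{b-k}$; since $h_{b,k}(0)=G(1)=0$ and $k\ge 2>\alpha$ makes $h_{b,k}(p)G(p)\to 0$ as $p\to 0$, integration by parts gives $\binom{b}{k}\lambda_{b,k}=\int_0^1 h'_{b,k}(p)G(p)\,dp$, and $p=u/b$ turns this into $b^\alpha L(b)\int_0^\infty\phi_k'(u)u^{-\alpha}\,du$ with $\phi_k(u)=u^ke^{-u}/k!$. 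A short Gamma-function calculation evaluates the last integral as $\alpha\Gamma(k-\alpha)/k!$, and dividing by the asymptotic for $\lambda(b)$ gives the stated limit.

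\paragraph{Part (ii), dust and asymptotics.} Lemma \ref{Lmm1}(iii) and Fubini give $\kappa(\infty)=\int_{[0,1]}\Lambda(dp)/p=\int_0^1 G(y)\,dy$, and the substitution $y=1/t$ converts this into $\int_1^\infty t^{\alpha-2}L(t)\,dt$, yielding the dust criterion; in particular no dust forces $\alpha\ge 1$, and $\alpha=1$ additionally requires $L^*(\infty)=\infty$. For $1<\alpha<2$ the substitution $y=u/x$ in the $\kappa$-identity mirrors Part~(i), and a single integration by parts computes $\int_0^\infty(1-e^{-u})u^{-\alpha}\,du=\Gamma(2-\alpha)/(\alpha-1)$. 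For $\alpha=1$ that integral diverges at infinity, so I would first rewrite by Fubini
\[\kappa(x)=\int_0^1(x-1)(1-z)^{x-2}F(z)\,dz,\qquad F(z):=\int_z^1 G(y)\,dy\sim L^*(1/z),\]
and then the substitution $z=u/(x-1)$ with dominated convergence produces $\kappa(x)\sim L^*(x)\int_0^\infty e^{-u}\,du=L^*(x)$.

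\paragraph{Properties of $L^*$ and main obstacle.} That $L^*$ is slowly varying with $L(x)=o(L^*(x))$ (given $L^*(x)\to\infty$) is classical; a self-contained proof starts from $L^*(\lambda x)-L^*(x)\sim L(x)\log\lambda$ (uniform convergence of $L$) and uses the chain relation $L^*(\lambda\mu x)/L^*(x)=(L^*(\lambda\mu x)/L^*(\mu x))\cdot(L^*(\mu x)/L^*(x))$ to rule out any nonzero limit of $L(x)/L^*(x)$. The principal difficulty in the lemma lies in the borderline case $\alpha=1$: the natural scaling $y\sim 1/x$ produces a divergent Gamma integral, and the correct normalisation is the slowly varying $L^*(x)$ rather than any power of $x$. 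The Fubini identity that replaces $G$ by its tail $F$ is what makes the scaling argument go through cleanly, so slow variation of $L^*$ must be established in tandem with, rather than after, the asymptotic for $\kappa$.
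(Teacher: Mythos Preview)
Your argument is correct in all essentials, but it diverges from the paper's in two places. For the second claim of part~(i) the paper does not integrate each $\binom{b}{k}\lambda_{b,k}$ by parts; instead it derives a closed formula for the tail $\lambda(b)-\sum_{j=2}^{k}\binom{b}{j}\lambda_{b,j}=\binom{b}{k}(b-k)\int_0^1 y^{k}(1-y)^{b-k-1}G(y)\,dy$, applies the same substitution $y=z/b$ once, and reads off both $\lambda(b)$ (case $k=1$) and $\binom{b}{k}\lambda_{b,k}$ (successive differences) simultaneously. More significantly, for part~(ii) the paper never touches the Laplace-type integral for $\kappa$ directly: it combines part~(i) with Lemma~\ref{Lmm1}(ii) to get $\kappa'(x)\sim\Gamma(2-\alpha)x^{\alpha-2}L(x)$ and then simply integrates, using $\kappa(x)\to\infty$. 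This handles $1<\alpha<2$ and $\alpha=1$ uniformly and sidesteps your second Fubini step and the need to know in advance that $L^*$ is slowly varying; your route is more hands-on but, as you note, forces you to establish the properties of $L^*$ before the $\alpha=1$ asymptotic rather than after.

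One genuine gap in your sketch: the chain-relation argument for $L(x)=o(L^*(x))$ only rules out a \emph{nonzero limit} of $L(x)/L^*(x)$, tacitly assuming a limit exists, which is not given. The paper's self-contained argument avoids this by exploiting that $L^*$ is increasing: from $L^*(cx)-L^*(x)\sim L(x)\log c$ it bounds
\[0\le \frac{L^*(cx)}{L^*(x)}-1\le \frac{L^*(cx)-L^*(x)}{L^*(x)-L^*(x/d)}\sim\frac{\log c}{\log d}\]
for arbitrary $d>1$, which forces slow variation directly; then $L(x)/L^*(x)\sim (L^*(ex)-L^*(x))/L^*(x)\to 0$ follows. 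Alternatively you may cite the classical de~Haan result, but the monotonicity trick is short enough to include.
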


\noindent
For $1<\alpha < 2$ the convergence result on $\lambda_{b,k}$ has already been obtained by Delmas et al \cite{De}. In this case they  also have asymptotic estimates on $\lambda(x)$ and $\mu(x)$ under the assumption that the slowly varying function $L$ is constant.

\begin{proof} (i)
Let $1\le k \le b$ be natural numbers. Starting from the identity
\[ 1- (1-p)^b- bp(1-p)^{b-1} - \cdots - \binom bk p^k (1-p)^{b-k} = \binom bk (b-k) \int_0^p y^k (1-y)^{b-k-1} \, dy\ , \]
we obtain
\begin{align*} \lambda(b)- \sum_{j=2}^k \binom bj \lambda_{b,j} &= \binom bk (b-k)\int_{[0,1]}  \int_0^p y^k (1-y)^{b-k-1} \, dy\, \frac{\Lambda(dp)}{p^2}\\
&= \binom bk (b-k)\int_0^1   y^k (1-y)^{b-k-1}   \int_{(y,1]}  \frac{\Lambda(dp)}{p^2} \, dy \ .
\end{align*}
Taking account of the definition of regular varying coalescents, it is no loss of generality to specify the function $L$ in such a way  that $\int_{(y,1]}  p^{-2}\Lambda(dp) =y^{-\alpha}L(1/y)$ for $0<y\le1$. It follows that
\begin{align*}
\lambda(b)- \sum_{j=2}^k \binom bj \lambda_{b,j} &= \binom bk (b-k)\int_0^1 y^{k-\alpha} (1-y)^{b-k-1} L\Big( \frac 1y\Big) \, dy \\
&= \binom bk (b-k)b^{\alpha-k-1}L(b) \int_0^b z^{k-\alpha} \Big(1-\frac zb\Big)^{b-k-1} \frac{L(b/z)}{L(b)} \, dz \ ,
\end{align*}
where for $k=1$ we set  the value of the left-hand sum equal to $0$. 

Since the function $L$ is slowly varying, the right-hand integrand converges pointwise to the limit 
$z^{k-\alpha} e^{-z}$. Also by the fundamental representation theorem of slowly varying functions (see Feller \cite{Fe} Section VIII.9, Corollary) we have $L(b) \sim c\exp( \int_0^b \eta(z)z^{-1} \, dz)$ with a constant $c>0$ and a function $\eta(z)=o(z)$ as $z \to \infty$. This implies  that for any $\varepsilon>0$ we have $L(b/z) \le z^{\varepsilon} L(b)$ for  $z\ge 1$ and $L(b/z) \le z^{-\varepsilon} L(b)$ for  $z\le 1$ if only $b$ is sufficiently large. Hence, for large $b$ we may dominate the above integrand by the function $z^{k-\alpha} \max(z^{\varepsilon},z^{-\varepsilon})e^{-z/2}$. By the assumption  $\alpha<2$ it is integrable for $k \ge 1$ if we choose $\varepsilon $  small enough. Thus by dominated convergence
\[ \lambda(b)- \sum_{j=2}^k \binom bj \lambda_{b,j} \sim \frac{b^\alpha L(b)}{k!} \int_0^\infty z^{k-\alpha}e^{-z}\, dz=\frac{b^\alpha L(b)\Gamma(k-\alpha+1)}{k!} \]
as $b \to \infty$, or in other terms
\[\lambda(b) \sim \Gamma(2-\alpha) b^\alpha L(b)   \ \text{ and } \ \binom bk \lambda_{b,k} \sim \frac{\alpha\Gamma(k-\alpha)}{k!} b^\alpha L(b) \]
for $k \ge 2$. This implies our claim.

(ii) From part (i) and Lemma \ref{Lmm1} (ii) we obtain
\[ \kappa'(x) \sim  \Gamma(2-\alpha) x^{\alpha-2} L(x)\]
as $x \to \infty$.
In view of Lemma \ref{Lmm1} (iii) we are  in the dustless case if and only if $\kappa(x)\to \infty$, that is if and only if the integral $\int_1^\infty x^{\alpha-2} L(x)\, dx$ is divergent. This  implies $\alpha \ge 1$ and the  claimed asymptotic formulas for $\kappa$.

Finally, since $L$ is slowly varying at infinity, we have for $c>1$   
\begin{align} L^*(cx)-L^*(x) = \int_{x}^{cx} \frac {L(y)}y \, dy \sim L(x) \int_{x}^{cx} \frac {1}y \, dy = L(x) \log c 
\label{Lstern}
\end{align}
as $x \to \infty$. Because $L^*$ is increasing, this implies for any $d>1$ and large $x$
\[ 0 \le \frac{L^*(cx)}{L^*(x)}-1 \le \frac{L^*(cx)-L^*(x)}{L^*(x)-L^*(x/d)}\sim \frac{\log c}{\log d}\]
as $x\to \infty$. Since $d$ may be chosen arbitrarily large, it follows that $L^*$ is slowly varying. Consequently, choosing $c=e$ in \eqref{Lstern},
\[ \frac {L(x)}{L^*(x)} \sim \frac{ L^*(ex)-L^*(x)}{L^*(x)} =o(1)\ . \]
This finishes our proof.
\end{proof}

\paragraph{Examples.} We consider regularly varying coalescents with exponent $\alpha=1$.

(i) Let $L(x)= (\log x)^\chi $  with exponent $\chi >-1$. Then
\[ L^*(x)= \int_0^{\log x}  y^\chi \, dy = \frac 1{\chi+1}  (\log x)^{\chi+1} = \frac 1{\chi+1} L(x)\log x \ . \]
For $-1 < \chi \le 0$ these coalescents neither have a dust component nor come down from infinity, and for $\chi >0$ they come down from infinity. The case $\chi=0$ covers the Bolthausen-Sznitman coalescent.

(ii) Let $L(x)= e^{(\log x)^\chi}$ with $0<\chi < 1$. Then
\[L^*(x)= \int_0^{\log x} e^{y^\chi} \, dy \sim \frac 1{\chi}  e^{(\log x)^\chi} (\log x)^{1-\chi}=\frac 1{\chi}L(x)  (\log x)^{1-\chi} \ . \]
These coalescents come down from infinity. \qed

\section{Some general laws of large numbers}

The laws of large numbers of this section  apply not only to branch lengths of $\Lambda$-coalescents. As explained  in the introduction, they concern the  discrete-time Markov chains $X=(X_i)_{i\in \mathbb N_0}$ embedded  in the lineage counting processes. For notational ease we do  not account here for the dependence of the chains $X$ on $n$. Thus    $n=X_0 > X_1 > \cdots > X_{\tau_n-1} > X_{\tau_n}=1$ denote the states which the process $N_n$ is successively visiting, and  $\tau_n:=\min \{ i \ge 0: X_i=1\}$ is the respective number of merging events. For convenience we set $X_i=1$ for all natural numbers $i >\tau_n$. Also let for $i\ge 1$ be $W_i$ the waiting times of the process $N_n$ in the states $X_i$ and
\[ \Delta_i := X_{i-1}-X_{i} \ . \]
Recall that
\[ \mathbf E[ \Delta_{i+1} \mid X_i=b] = \nu(b):= \frac{\mu(b)}{\lambda(b)}\ . \]
For sequences $r_n, s_n\ge 2$, $n \ge 1$, of positive numbers we set
\[ \rho_n:= \min\{ i\ge 0: X_i\le r_n\} \ , \ \sigma_n:= \min\{ i\ge 0: X_i\le s_n\}\ , \ \tilde \rho_n := \inf\{t \ge 0: N_n(t)\le r_n \} \ .\]

\begin{Proposition}  \label{Prp1} Assume that the coalescent has no dust component.
Let $f:[2,\infty) \to \mathbb R$ be a  non-negative
function with the property that $x^\beta f(x)$ is  increasing and $x^{-\beta}f(x)$ is decreasing in $x$ for some $\beta >0$. Let $2\le r_n\le  s_n\le n$, $n \ge 1$, be two sequences of numbers fulfilling 
\begin{align}  
r_n \le \gamma s_n 
\label{rnrn}
\end{align}
for all $n \ge 1$ and some $\gamma <1$.  Also assume that
\begin{align} \tilde \rho_n \stackrel P \to 0 
\label{sigma0}
\end{align}
as $n \to \infty$. Then,  as $n \to \infty$
\[ \sum_{i=\sigma_n}^{\rho_n-1} f(X_i) \ \stackrel 1\sim \int_{r_n}^{s_n} f(x) \, \frac {dx}{\nu(x)} \ . \]
Also, as $n \to \infty$
\[ \mathbf E\Big[ \sum_{i=\rho_n}^{\tau_n-1} f(X_i) \Big]= O\Big( \int_2^{r_n} f(x) \, \frac {dx}{\nu(x)}\Big)  \ . \]
\end{Proposition}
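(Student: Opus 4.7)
\emph{Proof strategy.} Let $(\mathcal{F}_i)$ denote the natural filtration of $X$ and write $I_n:=\int_{r_n}^{s_n}f(x)/\nu(x)\,dx$. Since $\mathbf E[\Delta_{i+1}\mid\mathcal{F}_i]=\nu(X_i)$, the process
$$M_k:=\sum_{i=0}^{k-1}f(X_i)\Bigl(\frac{\Delta_{i+1}}{\nu(X_i)}-1\Bigr)$$
is an $(\mathcal{F}_i)$-martingale, yielding the decomposition
$$\sum_{i=\sigma_n}^{\rho_n-1}f(X_i)\,=\,T_n-(M_{\rho_n}-M_{\sigma_n}),\qquad T_n:=\sum_{i=\sigma_n}^{\rho_n-1}\frac{f(X_i)(X_i-X_{i+1})}{\nu(X_i)}.$$
The plan is to prove separately that (i)~$T_n\stackrel 1\sim I_n$ and (ii)~$\mathbf E[(M_{\rho_n}-M_{\sigma_n})^2]=o(I_n^2)$.

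For (i), viewed this way $T_n$ is a left endpoint Riemann sum for $\int_{X_{\rho_n}}^{X_{\sigma_n}}f(x)/\nu(x)\,dx$. The crucial input is that the rate at which a $\Lambda$-coalescent in state $b$ performs a merger of relative size $\ge\varepsilon$ can be bounded by a constant depending only on $\varepsilon$ and $\Lambda$, uniformly in $b$ (the contribution from $p\ge\varepsilon/2$ is bounded by $4\varepsilon^{-2}\Lambda((\varepsilon/2,1])$, and the contribution from $p<\varepsilon/2$ is controlled by a Chernoff bound for $\mathrm{Bin}(b,p)$). Together with $\tilde\rho_n\to 0$ in probability, this forces $\max_{i<\rho_n}\Delta_{i+1}/X_i\stackrel P\to 0$. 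The $\beta$-monotonicity of $f$, combined with a matching polynomial oscillation estimate for $\nu$ extracted from Lemma~\ref{Lmm1} (which yields $\nu(z)/\nu(y)\in[(z/y)^2,y/z]$ for $z\le y$ in the relevant range), then controls both the boundary overshoots at $\sigma_n,\rho_n$ and the relative oscillation of $f/\nu$ on each subinterval $[X_{i+1},X_i]$, giving $T_n/I_n\to 1$ in probability; a companion second moment computation of $T_n$ upgrades this to $L^1$ convergence.

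For (ii), orthogonality of martingale increments together with the elementary bound $\Delta_{i+1}^2\le(\Delta_{i+1}/X_i)\cdot X_i\Delta_{i+1}$ gives
$$\mathbf E[(M_{\rho_n}-M_{\sigma_n})^2]\le \mathbf E\Bigl[\sum_{i=\sigma_n}^{\rho_n-1}\frac{f(X_i)^2\Delta_{i+1}^2}{\nu(X_i)^2}\Bigr]\le \mathbf E\Bigl[\max_{\sigma_n\le i<\rho_n}\frac{\Delta_{i+1}}{X_i}\cdot\max_{r_n\le x\le s_n}\frac{xf(x)}{\nu(x)}\cdot T_n\Bigr].$$
The $\beta$-monotonicity of $f$ together with $r_n\le\gamma s_n$ forces $\int_{y/2}^y f(x)/\nu(x)\,dx\gtrsim yf(y)/\nu(y)$ uniformly in $y\in[r_n,s_n]$ (comparing values inside each such dyadic subinterval via the polynomial bounds for $f$ and for $\nu$), whence $\max_{r_n\le x\le s_n}xf(x)/\nu(x)=O(I_n)$. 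Combined with $\max_i\Delta_{i+1}/X_i=o_P(1)$ and $T_n=O_P(I_n)$, this shows the martingale error is $o_P(I_n^2)$, and the same second moment bound provides the uniform integrability required for the $L^1$ conclusion.

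For the second assertion, apply the tower property summand by summand to obtain
$$\mathbf E\Bigl[\sum_{i=\rho_n}^{\tau_n-1}f(X_i)\Bigr]\,=\,\mathbf E\Bigl[\sum_{i=\rho_n}^{\tau_n-1}\frac{f(X_i)(X_i-X_{i+1})}{\nu(X_i)}\Bigr].$$
The $\beta$-monotonicity of $f$ with the oscillation estimate for $\nu$ gives the pointwise deterministic comparison $f(X_i)(X_i-X_{i+1})/\nu(X_i)\le C\int_{X_{i+1}\vee 2}^{X_i}f(x)/\nu(x)\,dx$ with a constant depending only on $\beta$, and telescoping along $X_{\rho_n}>X_{\rho_n+1}>\cdots>X_{\tau_n}=1$ dominates the inner sum by $C\int_2^{r_n}f(x)/\nu(x)\,dx$, as claimed. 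The main obstacle is the translation of $\tilde\rho_n\to 0$ into the uniform small jump bound $\max_{i\le\rho_n}\Delta_{i+1}/X_i\stackrel P\to 0$, which underpins both the Riemann sum approximation and the martingale $L^2$ bound; the auxiliary estimate $\max_{r_n\le x\le s_n}xf(x)/\nu(x)=O(I_n)$, derived purely from the $\beta$-monotonicity and the geometric separation $r_n\le\gamma s_n$, is the other delicate ingredient.
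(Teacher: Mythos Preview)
Your strategy is the same as the paper's: decompose $\sum f(X_i)$ into a Riemann sum $T_n$ plus a martingale term, control the Riemann sum via the polynomial oscillation bounds for $f/\nu$ and the small-jump property $\max_{i<\rho_n}\Delta_{i+1}/X_i\to 0$ (this is Lemma~\ref{Lmm3}(ii) in the paper), and bound the martingale in $L^2$ using $\Delta^2\le(\Delta/X)\cdot X\Delta$ together with $\max_{[r_n,s_n]}xf(x)/\nu(x)=O(I_n)$. The one genuine technical difference is that the paper \emph{truncates} the jumps at level $\eta X_i$, replacing $\nu$ by $\nu_\eta(b)=\mathbf E[\Delta_{i+1}I_{\{\Delta_{i+1}\le\eta b\}}\mid X_i=b]$ and proving $\nu_\eta/\nu\to 1$ from the dustless hypothesis. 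This buys a \emph{deterministic} factor $\eta$ in the quadratic variation bound and a deterministic upper bound $R_n\le C_\eta I_n$, from which the $L^1$ statement is read off directly. You instead keep the untruncated martingale and replace the deterministic $\eta$ by the random $\max_i\Delta_{i+1}/X_i$.

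Your route works, but your justification of the $L^1$ upgrade is imprecise as written: invoking ``a companion second moment computation'' and ``uniform integrability'' without saying \emph{what} is uniformly integrable is not enough, and the inequality you display for the martingale only gives $o_P(I_n^2)$, not $o(I_n^2)$, from $\max_i\Delta_{i+1}/X_i=o_P(1)$ and $T_n=O_P(I_n)$ alone. The missing observation (which does follow from your own ingredients) is that $T_n\le C\,I_n$ \emph{deterministically}: for $i\le\rho_n-2$ one has $X_{i+1}>r_n$ and the pointwise bound $\tfrac{f(X_i)}{\nu(X_i)}\Delta_{i+1}\le(\beta+4)\int_{X_{i+1}}^{X_i}\tfrac{f}{\nu}$ (from $x^{-(\beta+3)}f/\nu$ decreasing) telescopes into $(\beta+4)I_n$, while the single boundary term at $i=\rho_n-1$ is at most $X_{\rho_n-1}\,\tfrac{f(X_{\rho_n-1})}{\nu(X_{\rho_n-1})}\le\max_{[r_n,s_n]}x f(x)/\nu(x)=O(I_n)$. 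With $T_n/I_n$ bounded, dominated convergence gives $\mathbf E[T_n]/I_n\to 1$, and your martingale bound becomes $\mathbf E[(M_{\rho_n}-M_{\sigma_n})^2]\le O(I_n^2)\,\mathbf E[\max_i\Delta_{i+1}/X_i]=o(I_n^2)$ since $\max_i\Delta_{i+1}/X_i\in[0,1]$. This closes the gap; but you should state the deterministic bound explicitly rather than appeal to an unspecified second-moment or UI argument. The same pointwise comparison is exactly what makes your treatment of the second assertion work (with the usual proviso that the finitely many states $X_i$ in a fixed bounded range contribute only a constant, absorbed in the $O(\cdot)$).
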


\noindent
Due to the assumption \eqref{sigma0}  this result addresses the coalescent's evolution in only  a short initial  time interval. This takes double effect: first, as seen from the next lemma,  the chain is kept away from the occurrence of huge jumps $\Delta_i$  being of the same order as the chain's values. Second, the chain $X$ is prevented  from taking too small values where larger fluctuations may become obstructive.  Note that for any $\Lambda$-coalescent the passage times $\inf\{ t \ge 0: N_n(t) \le r\}$ below some number $r>1$ are bounded away from 0 uniformly in $n\in \mathbb N$. Therefore the assumption \eqref{sigma0}  enforces that 
\begin{align} r_n \to \infty
\label{rninfty}
\end{align}
as $n \to \infty$. 
Actually,  both requirements are equivalent if the coalescent comes down from infinity, otherwise the assumption \eqref{sigma0}  is the more incisive one.

\paragraph{Example: total number of mergers.} If we choose $f\equiv 1$ and $s_n=n$, then $\sum_{i=0}^{\rho_n-1} f(X_i)$ is equal to the number of mergers up to time $\tilde \rho_n$. For coalescents coming down from infinity we may consider any divergent sequence $r_n\le \gamma n$. In particular, since $\int_2^\infty\frac {dx}{\nu(x)}=\infty$,  we may choose the $r_n$ in such a way that $r_n=o\big(\int_{r_n}^n \frac {dx}{\nu(x)}\big)$. This implies that the number of  mergers after the moment $\tilde \rho_n$ are of negligible order and that for  the total number $\tau_n$ of mergers we have
\[ \tau_n \stackrel 1 \sim \int_2^n \frac {dx}{\nu(x)} \ .\] 
Away from coalescents coming down from infinity the scope of this formula is unclear. For the Bolthausen-Sznitman coalescent it is valid (see \cite{GoMa}). \qed

\mbox{}\\
The proof of Proposition \ref{Prp1} is prepared by the next lemma.

\begin{Lemma} \label{Lmm3}  {\em (i)} If the coalescent has no dust component and if $\int_{r_n}^n \mu(x)^{-1}\, dx \to 0$, then as $n \to \infty$ \[\mathbf E[\tilde \rho_n]  \to 0\ . \]

 {\em (ii)} If $\tilde \rho_n \stackrel P\to 0$, then for any $\eta >0$ as $n \to \infty$
\[ \mathbf P( \Delta_{i+1} > \eta X_i \text{ for some } i< \rho_n) \to 0\ .\]
\end{Lemma}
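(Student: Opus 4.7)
For part (i), I would convert $\tilde\rho_n$ into a sum over the embedded chain and then replace each waiting time by a quantity that telescopes against $\mu$. Writing $\tilde\rho_n=\sum_{i=0}^{\rho_n-1}W_{i+1}$ with $W_{i+1}$ exponential of rate $\lambda(X_i)$,
\[ \mathbf E[\tilde\rho_n] = \mathbf E\Big[\sum_{i=0}^{\rho_n-1}\frac{1}{\lambda(X_i)}\Big] = \mathbf E\Big[\sum_{i=0}^{\rho_n-1}\frac{\Delta_{i+1}}{\mu(X_i)}\Big],\]
the second equality following by conditioning on $X_i$, since $\mathbf E[\Delta_{i+1}\mid X_i]/\mu(X_i) = \nu(X_i)/\mu(X_i) = 1/\lambda(X_i)$ and the event $\{i<\rho_n\}$ depends only on $X_0,\ldots,X_i$. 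Since $\mu(x)=x\kappa(x)$ with $\kappa$ increasing by Lemma \ref{Lmm1}(i), the function $\mu$ is itself increasing. For $i<\rho_n-1$ both $X_i$ and $X_{i+1}$ exceed $r_n$, hence $\Delta_{i+1}/\mu(X_i)\le\int_{X_{i+1}}^{X_i}dx/\mu(x)$; for the final step $i=\rho_n-1$ the cruder bound $\Delta_{i+1}/\mu(X_i)\le X_i/\mu(X_i)=1/\kappa(X_i)\le 1/\kappa(r_n)$ takes care of the jump that crosses the threshold. Telescoping and using $X_{\rho_n-1}>r_n$,
\[ \sum_{i=0}^{\rho_n-1}\frac{\Delta_{i+1}}{\mu(X_i)} \le \int_{r_n}^{n}\frac{dx}{\mu(x)}+\frac{1}{\kappa(r_n)}.\]
The first term vanishes by hypothesis, which also forces $r_n\to\infty$ (any bounded subsequence of $r_n$ would keep $\int_{r_n}^n dx/\mu(x)$ bounded below by a positive constant), whereupon Lemma \ref{Lmm1}(iii) yields $1/\kappa(r_n)\to 0$.

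For part (ii), the plan is to uniformly bound in $b$ the rate at which jumps of size exceeding $\eta b$ occur from state $b$, and then use the tightness supplied by $\tilde\rho_n\stackrel P\to 0$. This rate is
\[ R(b) := \int_{(0,1]}\mathbf P\bigl(\mathrm{Bin}(b,p)>\eta b+1\bigr)\,\frac{\Lambda(dp)}{p^2}.\]
On $\{\mathrm{Bin}(b,p)>\eta b+1\}$ the product $\mathrm{Bin}(b,p)(\mathrm{Bin}(b,p)-1)$ exceeds $\eta^2 b^2$, and $\mathbf E[\mathrm{Bin}(b,p)(\mathrm{Bin}(b,p)-1)]=b(b-1)p^2$, so Markov gives $\mathbf P(\mathrm{Bin}(b,p)>\eta b+1)\le p^2/\eta^2$. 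Hence $R(b)\le \Lambda([0,1])/\eta^2=:C(\eta)$, uniformly in $b$. Letting $J$ count the indices $i<\rho_n$ with $\Delta_{i+1}>\eta X_i$, the compensator bound gives for any $t>0$
\[ \mathbf P(J\ge 1) \le \mathbf P(\tilde\rho_n>t)+\mathbf E\Big[\int_{0}^{t\wedge\tilde\rho_n}R(N_n(s))\,ds\Big] \le \mathbf P(\tilde\rho_n>t)+C(\eta)\,t.\]
Given $\varepsilon>0$, first fix $t=\varepsilon/(2C(\eta))$ and then use $\tilde\rho_n\stackrel P\to 0$ to make the remaining probability less than $\varepsilon/2$ for $n$ large.

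The main obstacle I anticipate is the uniform-in-$b$ bound on $R(b)$ in part (ii); the elementary factorial-moment argument above is designed precisely to bypass the delicate Chernoff-type analysis that a direct handling of the potentially divergent integral $\int p^{-2}\Lambda(dp)$ near $0$ would otherwise demand.
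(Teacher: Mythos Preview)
Your proof is correct. Part (i) is essentially identical to the paper's argument: the same conversion $\mathbf E[\tilde\rho_n]=\mathbf E\bigl[\sum \Delta_{i+1}/\mu(X_i)\bigr]$ via $\mathbf E[\Delta_{i+1}\mid X_i]=\nu(X_i)$, the same telescoping against the increasing function $\mu$, and the same treatment of the boundary term through $1/\kappa(r_n)\to 0$.

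For part (ii) both you and the paper rest on the same key estimate---the rate of jumps of relative size exceeding $\eta$ from state $b$ is at most $\Lambda([0,1])/\eta^2$, obtained from the second factorial moment of the binomial---but the passage to the conclusion differs. The paper rewrites this as $\mathbf P(\Delta_{i+1}>\eta X_i\mid X_i=b)\le c/\lambda(b)$ and then, to convert $\sum 1/\lambda(X_i)$ into a quantity controlled by $\tilde\rho_n$, invokes the lower bound $\mathbf E[W_i\mathbf 1_{\{W_i\le 1\}}\mid X_i]\ge d/\lambda(X_i)$, arriving at $\mathbf P(\cdot)\le (c/d)\,\mathbf E[\tilde\rho_n\wedge 2]+\mathbf P(\tilde\rho_n>1)$. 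Your compensator argument is more direct: since the rate of big jumps is uniformly bounded by $C(\eta)$, the expected number of such jumps before the bounded stopping time $t\wedge\tilde\rho_n$ is at most $C(\eta)t$, and splitting on $\{\tilde\rho_n\le t\}$ immediately yields $\mathbf P(J\ge 1)\le \mathbf P(\tilde\rho_n>t)+C(\eta)t$. This bypasses the paper's detour through truncated exponential waiting times and is arguably cleaner.

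One minor point: your integral expression for $R(b)$ over $(0,1]$ omits the Kingman atom $\Lambda(\{0\})$, which for $b<1/\eta$ contributes an additional $\Lambda(\{0\})\binom{b}{2}$ to the big-jump rate. However, $\binom{b}{2}<1/(2\eta^2)$ in that range, so the uniform bound $R(b)\le \Lambda([0,1])/\eta^2$ still holds for all $b\ge 2$ and the argument goes through unchanged.
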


\begin{proof} (i) Given $X$ the waiting times $W_i$ are exponential with expectation $1/\lambda(X_i)$. Therefore
\begin{align*}
\mathbf E[\tilde \rho_n] = \mathbf E\Big[ \sum_{i=0}^{\rho_n-1} W_i \Big] = \sum_{i=0}^{n-1}\mathbf E\Big[  \frac 1{\lambda(X_i)}\ ; \ X_i > r_n \Big] \ .
\end{align*}
Also $\mathbf E [\Delta_{i+1} \mid X_i] = \nu(X_i)$ a.s., thus by the Markov property
\[ \mathbf E[\tilde \rho_n] = \sum_{i=0}^{n-1} \mathbf E \Big[ \frac {\Delta_{i+1}}{\lambda(X_i)\nu(X_i)}\ ; \ X_i > r_n\Big]= \mathbf E \Big[\sum_{i=0}^{\rho_n-1}\frac{\Delta_{i+1} }{\mu(X_i)} \Big] \ .  \]
From Lemma \ref{Lmm1} (i) we have that $\mu(x)=x\kappa(x)$ is increasing. Also $\Delta_{i+1} \le X_i$, hence
\[ \mathbf E[\tilde \rho_n] \le \mathbf E \Big[ \sum_{i=0}^{\rho_n-2} \int_{X_{i+1}}^{X_{i}} \frac{dx}{\mu(x)} + \frac{X_{\rho_n-1}}{\mu(X_{\rho_n-1})} \Big]= \mathbf E\Big[ \int_{X_{\rho_n-1}}^{n} \frac{dx}{\mu(x)}  + \frac{1}{\kappa(X_{\rho_n-1})} \Big]\ ,\]
and since $\kappa$ is an increasing function, we end up with the estimate
\[ \mathbf E[\tilde \rho_n] \le \int_{ r_n}^{n} \frac{dx}{\mu(x)} + \frac 1{\kappa(r_n)} \ . \]
Our assumptions imply $r_n \to \infty$. Therefore, since there is no dust component, we have $\kappa(r_n)\to \infty$ by Lemma \ref{Lmm1} (iii). This entails our claim.

(ii) For $b \ge 2/\eta$ we have
\begin{align*}
\mathbf P( \Delta _{i+1} > \eta X_i \mid X_i=b) &= \sum_{k >\eta b}\frac 1{\lambda (b)}  \int_{[0,1]} \binom bk p^k(1-p)^{b-k} \frac{\Lambda (dp)}{p^2} \\
&\le \frac 1{\lambda (b)}  \int_{[0,1]}  \sum_{k >\eta b} \frac {2}{\eta^2} \binom {b-2}{k-2} p^{k-2}(1-p)^{(b-2)-(k-2)}\, \Lambda(dp)\\
&\le \frac {c}{ \lambda(b)} \ ,
\end{align*}
with $c:=2\Lambda([0,1])/\eta^2$. 

Let  $\chi_0:=0$ and $\chi_{i}:= W_0+ \cdots + W_{i-1}$, $i\ge 1$, which is the moment of the $i$-th jump.  Then
\begin{align*}
\mathbf P( \Delta_{i+1} > \eta X_i &\text{ for some } i< \rho_n\, , \, \tilde \rho_n \le 1) \\&\le  \mathbf P( \Delta_{i+1} > \eta X_i\, ,\, X_i > r_n\, ,\, \chi_{i} \le 1 \text{ for some } i < n)\\
&\le \sum_{i=0}^{n-1} \mathbf E\big[ \mathbf P(\Delta_{i+1} > \eta X_i \mid X_i) \, ; \, X_i > r_n, \chi_{i} \le 1\big] \\
& \le  \sum_{i=0}^{n-1} \mathbf E\Big[ \frac c{\lambda (X_i)}\, ; \, X_i > r_n, \chi_{i} \le 1\Big]\ .
\end{align*}
Also, because $\lambda(x)$ is increasing, for $X_i \ge 2$
\begin{align*} \mathbf E [ W_iI_{\{W_i \le 1\}} \mid X_i] = \int_0^1t \lambda (X_i)e^{-\lambda(X_i) t}\, dt = \frac 1{\lambda(X_i)} \int_0^{\lambda(X_i)} ue^{-u} \, du \ge \frac d{\lambda(X_i)}  
\end{align*}
with $d:= \int_0^{\lambda(2)} ue^{-u} \, du >0$. This allows for the estimate
\begin{align*}\mathbf P( \Delta_{i+1} > \eta X_i &\text{ for some } i< \rho_n\, , \, \tilde \rho_n \le 1) \\
&\le\frac cd \sum_{i=0}^{n-1}  \mathbf E\Big[  W_i \, ; \,X_i > r_n, W_i \le 1, W_0+ \cdots + W_{i-1}  \le 1\Big]\\
&\le\frac cd  \mathbf E\Big[  \sum_{i=0}^{\rho_n-1} W_i I_{\{ W_0+\cdots+W_i\le 2\}}\Big]\\
&\le\frac cd \mathbf E\Big[ 2\wedge \sum_{i=0}^{\rho_n-1} W_i \Big] \\
&= \frac cd \mathbf E[ 2\wedge \tilde \rho_n] \ ,
\end{align*}
and consequently
\[ \mathbf P( \Delta_{i+1} > \eta X_i \text{ for some } i< \rho_n ) \le \frac cd \mathbf E[ \tilde \rho_n \wedge 2]+ \mathbf P(\tilde \rho_n >1)\ . \]
Thus by assumption and dominated convergence our claim follows.
\end{proof}

\begin{proof}[Proof of Proposition \ref{Prp1}]
(i) We start with some preliminary estimates. Since $x^{-\beta} f(x)$ is a decreasing function, we have for $0<\chi<1$, $0\le y\le  \chi z  $ and $y\le z-2$
\begin{align*}
yf(z) \le z^{\beta}  \int_{z-y}^{ z} x^{-\beta} f(x)\, dx \le  z^\beta (z-y)^{-\beta}\int_{z-y}^{ z} f(x)\, dx \le (1-\chi)^{-\beta}  \int_{z-y}^{ z} f(x)\, dx \ .
\end{align*}
Similarly, using that $x^\beta f(x)$ is increasing,  we obtain a lower bound. Altogether, for $0<\chi<1$, $0\le y\le \chi z$ and $y \le z-2$
\begin{align}
(1-\chi)^{\beta}  \int_{z-y}^{ z} f(x)\, dx \le yf(z) \le (1-\chi)^{-\beta}  \int_{z-y}^{ z} f(x)\, dx \ .
\label{fint1}
\end{align}

Moreover, for any $\varepsilon >0$ there is an $\eta>0$ such that
\begin{align}(1-\varepsilon) \int_{(1-\eta)r_n}^{s_n} f(x) \, dx \le  \int_{r_n}^{s_n} f(x) \, dx\le (1+\varepsilon) \int_{r_n}^{(1-\eta)s_n} f(x) \, dx
\label{fint2}
\end{align}
for all $n$. We prove the left-hand inequality. Let $a$ be the affine function mapping the interval $[(1-\eta)r_n, r_n]$ onto $[(1-\eta)r_n, r_n/\gamma]$. Substituting $y=a(x)$, this implies
\[ dy= \frac{\eta + \gamma^{-1}-1}{\eta} \, dx \ . \]
Moreover, since $\gamma < 1$, $a(x)\ge x$ for $x\ge (1-\eta)r_n$.
Therefore by monotonicity of $x^\beta$ and  $x^\beta f(x)$ we have with $\gamma $ as in \eqref{rnrn}
\begin{align*} \int_{(1-\eta)r_n}^{r_n} f(x) \, dx &\le ((1-\eta)r_n)^{-\beta} \int_{(1-\eta)r_n}^{r_n} f(x)x^\beta \, dx \\
&
\le ((1-\eta)r_n)^{-\beta} \int_{(1-\eta)r_n}^{r_n} f(a(x))a(x)^\beta \, dx
\\
&= ((1-\eta)r_n)^{-\beta}  \frac \eta {\eta+\gamma^{-1}-1}\int_{(1-\eta)r_n}^{r_n/\gamma} f(y)y^\beta \, dy\\
& \le (\gamma(1-\eta))^{-\beta} \frac \eta {\eta+\gamma^{-1}-1}\int_{(1-\eta)r_n}^{r_n/\gamma} f(y) \, dy\ .
\end{align*}
From condition \eqref{rnrn} it follows that
\[ \int_{(1-\eta)r_n}^{r_n} f(x) \, dx  \le (\gamma(1-\eta))^{-\beta} \frac \eta {\eta+\gamma^{-1}-1}\int_{(1-\eta)r_n}^{s_n} f(x) \, dx \le \varepsilon \int_{(1-\eta)r_n}^{s_n} f(x) \, dx \]
for sufficiently small $\eta >0$. This implies the left-hand inequality  of \eqref{fint2}. The other one follows similarly, now using the monotonicity of $x^{-\beta}f(x)$.

We note that $(x-1)/x^2$ is a decreasing function for $x \ge 2$. Therefore, in view of Lemma~1~(i) the functions $\lambda(x)$ and $\mu(x)$ are increasing and $\lambda(x)/x^3$ and $\mu(x)/x^3$ are decreasing for $x \ge 2$. Thus the function $f(x)/\nu(x)=f(x)\lambda(x)/\mu(x)$ fulfils the same assumptions as $f(x)$, with $\beta$ replaced by $\beta + 3$. Accordingly, we shall use the preceding estimates with $f(x) $ replaced by $f(x)/\nu(x)$ (and $\beta $ replaced by $\beta+3$).

(ii) Next we develop  Riemann approximations of certain random sums. For $0<\eta < 1$ and $b \ge 2$ let
\[ \mu_\eta(b) := \sum_{2 \le k \le \eta b} (k-1)\binom bk \int_{[0,1]}  p^k(1-p)^{b-k} \, \frac{\Lambda(dp)}{p^2} \ . \]
In view of the well-known formula for the second factorial moment of binomials,
\begin{align*} \mu(b)-\mu_\eta(b) &= \sum_{\eta b < k \le b} (k-1) \binom bk\int_{[0,1]}   p^k(1-p)^{b-k}\, \frac{\Lambda(dp)}{p^2} \\&
\le \frac 1{\eta b} \int_{[0,1]}  \sum_{k=0}^b k(k-1) \binom bk p^k(1-p)^{b-k}\, \frac{\Lambda(dp)}{p^2}\\
&= \frac 1{\eta b} \int_{[0,1]}  b(b-1)p^2\, \frac{\Lambda(dp)}{p^2} = \frac{(b-1)\Lambda([0,1])}\eta \ .
\end{align*}
From Lemma \ref{Lmm1} (iii) we have $\mu(b)/b\to \infty$ in the dustless case, hence we obtain for any $\eta>0$
\begin{align}\label{mueps} \frac {\mu_\eta(b)}{\mu(b)} \to 1 \end{align}
as $b \to \infty$.

Also let
\[ \nu_\eta(b):= \frac {\mu_\eta(b)}{\lambda (b)} = \mathbf E[ \Delta_{i+1}I_{\{\Delta_{i+1} \le \eta b\}} \mid X_i=b ] \ .\]
Note that $\nu_\eta(b)>0$ for $b\ge 2/\eta$. Therefore,    for  $0<\eta <1$  and natural numbers $n$ satisfying $r_n \ge 2/\eta$ we may define  the random variables
\[ R_n=R_{n,\eta}:= \sum_{i=\sigma_n}^{\rho_n-1} f(X_i) \frac{ \Delta_{i+1}I_{\{\Delta_{i+1} \le \eta X_i\}}}{\nu_{\eta}(X_i)}  \ . \]
Given $\eta$ these random variables are in view  of \eqref{rninfty} well-defined up to finitely many $n$. We shall use them below as an intermediate approximation to $\sum_{i=\sigma_n}^{\rho_n-1} f(X_i)$. We estimate $R_n$ from above and below. From \eqref{fint1} with $z=X_i$,  $y=\Delta_{i+1}$ and $\chi=\eta$  and with $f(x)/\nu(x)$ replacing $f(x)$ we have on the event that $\Delta_{i+1} \le \eta X_i$
\[ \frac{f(X_i)}{\nu(X_i)} \Delta_{i+1} \le (1-\eta)^{-\beta-3} \int_{X_{i+1}}^{X_i} f(x)\, \frac{dx}{\nu(x)} \ , \]
consequently 
\begin{align*}
R_n \le (1-\eta)^{-\beta -3} \sum_{i=\sigma_n}^{\rho_n-1} \frac {\nu(X_i)}{\nu_\eta(X_i)} I_{\{\Delta_{i+1} \le \eta X_i\}}\int_{X_{i+1}}^{X_i} f(x)\, \frac{dx}{\nu(x)} 
\end{align*}
and by definition of $\rho_n$
\begin{align}
R_n \le (1-\eta)^{-\beta-3} \sup_{b \ge r_n} \frac{\mu(b)}{\mu_\eta(b)} \int_{r_n(1-\eta)}^{s_n} f(x) \, \frac{dx}{\nu(x)} \ .
\label{Rn4}
\end{align}

Therefore, in view of   \eqref{fint2} and \eqref{mueps} and since $r_n\to \infty$,  there is for given $\varepsilon >0$ an $\eta>0$ fulfilling
\begin{align} R_n \le (1+\varepsilon) \int_{r_n}^{s_n} f(x) \, \frac{dx}{\nu(x)} 
\label{Rn}
\end{align}
for all $n$ sufficiently large. Similarly, for given $\varepsilon>0$ there is an $\eta>0$ satisfying for large $n$ the inequality
\begin{align}
R_n \ge (1-\varepsilon) \int_{r_n}^{s_n} f(x) \, \frac{dx}{\nu(x)}  \ \text{ on the event } \ \{ \Delta_{i+1} \le \eta X_i \text{ for all } i < \rho_n\} \ .
\label{Rn2}
\end{align}

(iii) Now observe that the random variables $M_0:=0$ and
\[ M_k := \sum_{i=0}^{k\wedge \tau_n-1} \Big(  f(X_i) \frac{ \Delta_{i+1}I_{\{\Delta_{i+1} \le \eta X_i\}}}{\nu_{\eta}(X_i)}-f(X_i)  \Big)\ , \ k \ge 1 \ , \]
 build a martingale $M=(M_k)_{k \ge 0}$. The optional sampling theorem yields
\begin{align*}
\mathbf E\Big[\Big(R_n-\sum_{i=\sigma_n}^{\rho_n-1} f(X_i) \Big)^2\Big] &= \mathbf E[(M_{\rho_n}- M_{\sigma_n})^2]\\
&\le \mathbf E\Big[ \sum_{i=\sigma_n}^{\rho_n-1} \frac{f(X_i)^2 \Delta_{i+1}^2}{\nu_\eta(X_i)^2}I_{\{\Delta_{i+1} \le \eta X_i\}}\Big] \\
&\le \eta \mathbf E\Big[ \sum_{i=\sigma_n}^{\rho_n-1} \frac{f(X_i)^2X_i \Delta_{i+1}}{\nu_\eta(X_i)^2}I_{\{\Delta_{i+1} \le \eta X_i\}}\Big] \ .
\end{align*}
Letting $x_n$ be the point where the function $xf(x)/\nu_\eta(x)$ takes its maximal value within the interval $[r_n,s_n]$, it follows
\begin{align*}
\mathbf E\Big[\Big(R_n-\sum_{i=\sigma_n}^{\rho_n-1} f(X_i) \Big)^2\Big]  \le \eta \frac{x_nf(x_n)}{\nu_\eta(x_n)} \mathbf E\Big[ \sum_{i=\sigma_n}^{\rho_n-1} \frac{f(X_i) \Delta_{i+1}}{\nu_\eta(X_i)}I_{\{\Delta_{i+1} \le \eta X_i\}}\Big]=\eta \frac{x_nf(x_n)}{\nu_\eta(x_n)} \mathbf E\big[R_n\big]\ .
\end{align*}
By the assumption \eqref{rnrn} it follows that there are numbers $\xi_n$, $n \ge 1$, such  that 
\[r_n \le \gamma \xi_n \le x_n \le \xi_n \le s_n\ .\] 
Using monotonicity of $x^{\beta+3}f(x)/\nu(x)$ and \eqref{fint1} with $z=\xi_n$, $y=(1-\gamma)\xi_n $, $\chi=1-\gamma$, it follows that
\[ (1-\gamma) \frac{x_n f(x_n)}{\nu(x_n)}  \le \Big(\frac {\xi_n}{ x_n}\Big)^{\beta+2} (1-\gamma)\frac{\xi_n f(\xi_n)}{\nu(\xi_n)}\le \gamma^{-2\beta-5} \int_{\gamma \xi_n}^{\xi_n} f(x) \, \frac{dx}{\nu(x)}  \le \gamma^{-2\beta-5} \int_{r_n}^{s_n} f(x) \, \frac{dx}{\nu(x)}\ , \]
hence
\[ \mathbf E\Big[\Big(R_n-\sum_{i=\sigma_n}^{\rho_n-1} f(X_i) \Big)^2\Big]  \le \eta \frac{\gamma^{-2\beta-5}}{1-\gamma} \sup_{x \ge 2/\eta }\frac{\nu(x)}{\nu_\eta(x)} \mathbf E[R_n] \int_{r_n}^{s_n} f(x) \, \frac{dx}{\nu(x)}\ .\]
Finally, the formulas \eqref{mueps} and \eqref{Rn} yield that for any $\varepsilon >0$ there is an $\eta >0$ fulfilling
\begin{align} \mathbf E\Big[\Big(R_n- \sum_{i=\sigma_n}^{\rho_n-1} f(X_i) \Big)^2\Big]  \le \varepsilon \Big(\int_{r_n}^{s_n} f(x) \, \frac{dx}{\nu(x)}\Big)^2 \ . 
\label{Rn3}
\end{align}

(iv) Putting pieces together, we obtain for given $\varepsilon >0$ and $\eta>0$ 
\begin{align*}
\mathbf P\Big(\Big| &\sum_{i=\sigma_n}^{\rho_n-1} f(X_i) - \int_{r_n}^{s_n} f(x) \, \frac{dx}{\nu(x)} \Big|\ge \varepsilon \int_{r_n}^{s_n} f(x) \, \frac{dx}{\nu(x)}   \Big)\\
&\le \mathbf P\Big(\Big|R_n- \sum_{i=\sigma_n}^{\rho_n-1} f(X_i) \Big|\ge \frac \varepsilon 2 \int_{r_n}^{s_n} f(x) \, \frac{dx}{\nu(x)}   \Big)\\
& \quad \mbox{} + \mathbf P\Big(\Big|R_n -\int_{r_n}^{s_n} f(x) \, \frac{dx}{\nu(x)} \Big|\ge \frac \varepsilon 2 \int_{r_n}^{s_n} f(x) \, \frac{dx}{\nu(x)} \ , \ \Delta_{i+1} \le \eta  X_i \text{ for all } i<\rho_n  \Big)\\
&\quad \mbox{} + \mathbf P\big(\Delta_{i+1} > \eta  X_i \text{ for some } i<\rho_n\ \big) \ .
\end{align*}
From \eqref{Rn3} for suitably chosen $\eta >0$ the first right-hand term becomes smaller than $\varepsilon$ and from \eqref{Rn} and \eqref{Rn2} the second one vanishes for large $n$. 
Consulting also Lemma \ref{Lmm3} (ii), we arrive at
\[ \mathbf P\Big(\Big| \sum_{i=\sigma_n}^{\rho_n-1} f(X_i) - \int_{r_n}^{s_n} f(x) \, \frac{dx}{\nu(x)} \Big|\ge \varepsilon \int_{r_n}^{s_n} f(x) \, \frac{dx}{\nu(x)}   \Big) \le \varepsilon \]
for $n$ large enough. This means that
\[ \sum_{i=\sigma_n}^{\rho_n-1} f(X_i) \stackrel P\sim  \int_{r_n}^{s_n} f(x) \, \frac{dx}{\nu(x)} \ . \]
To show $L_1$-convergence, it is by a convergence criterion due to F. Riesz  sufficient to have
\[ \mathbf E \Big[\sum_{i=\sigma_n}^{\rho_n-1} f(X_i)\Big]  \sim  \int_{r_n}^{s_n} f(x) \, \frac{dx}{\nu(x)}  \]
as $n \to \infty$. From  convergence in probability and Fatou's lemma we have
\[ \liminf_{n \to \infty} \mathbf E \Big[\sum_{i=\sigma_n}^{\rho_n-1} f(X_i)\Big]  \Big/\int_{r_n}^{s_n} f(x) \, \frac{dx}{\nu(x)} \ge 1 \ . \]
On the other hand,  \eqref{Rn} yields for given $\varepsilon >0$, suitable $\eta >0$ and large $n$
\[ \mathbf E \Big[\sum_{i=\sigma_n}^{\rho_n-1} f(X_i)\Big] = \mathbf E[R_n] \le (1+\varepsilon) \int_{r_n}^{s_n} f(x) \, \frac{dx}{\nu(x)}  \ . \]
This gives our first claim.

(v) For the second claim we again use the random variables $R_n$, now for  some $\eta>0$ (say $\eta=1/2$) with  $r_n=r:=2/\eta$. Recall that for this choice the $R_n$ are well-defined for all $n$. From inequality \eqref{Rn4}  we  obtain the estimate
\[ R_n \le (1-\eta)^{-\beta-3} \sup_{b \ge 2/\eta } \frac{\mu(b)}{\mu_\eta(b)} \int_2^{s_n}  f(x) \, \frac{dx}{\nu(x)} \ .\]
It follows
\begin{align*}
\mathbf E\Big[ \sum_{i=\sigma_n}^{\tau_n-1} f(X_i) \Big] \le \sum_{b < 2/\eta} f(b) + \mathbf E\Big[ \sum_{i=\sigma_n}^{\rho_n-1} f(X_i) \Big] = \sum_{b < 2/\eta} f(b)+\mathbf E[R_n] \ .
\end{align*}
Putting both estimates together and then replacing $\sigma_n$ by $\rho_n$ (which is just a change in notation),  we arrive at our second claim.
\end{proof}

\noindent
The next proposition presents a version of Proposition \ref{Prp1} in continuous time. As before, let $\tilde \rho_n$ and $\tilde\sigma_n$ denote the times, when the process $N_n$ falls below $r_n$ and $s_n$, respectively, while $\tilde \tau_n$ is the absorption time of $N_n$.

\begin{Proposition}   \label{Prp2}  Under the assumptions of Proposition \ref{Prp1},  as $n \to \infty$
\[ \int_{\tilde \sigma_n}^{\tilde \rho_n} f(N_n(t))\, dt  \ \stackrel 1\sim \int_{r_n}^{s_n} f(x) \, \frac {dx}{\mu(x)} \ . \]
Also,  as $n \to \infty$
\[ \mathbf E\Big[ \int_{\tilde \rho_n}^{\tilde \tau_n} f(N_n(t))\, dt \Big]=  O\Big( \int_{2}^{r_n} f(x) \, \frac {dx}{\mu(x)} \Big)\ . \]
\end{Proposition}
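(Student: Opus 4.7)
The plan is to reduce the continuous-time statement to Proposition~1 by writing the time integral as a sum over the inter-merger waiting times and then controlling the extra randomness by a martingale estimate. Between consecutive jumps $N_n$ stays at $X_i$ for a duration $W_i$ which, conditionally on $X$, is exponential with rate $\lambda(X_i)$. Hence
\[ \int_{\tilde\sigma_n}^{\tilde\rho_n} f(N_n(t))\,dt \;=\; \sum_{i=\sigma_n}^{\rho_n-1} f(X_i)\,W_i. \]
Writing $\hat W_i := \lambda(X_i)W_i$ and $g(x):=f(x)/\lambda(x)$, the variables $\hat W_0,\hat W_1,\ldots$ are i.i.d.\ Exp$(1)$ and independent of the embedded chain $X$, so
\[ \sum_{i=\sigma_n}^{\rho_n-1} f(X_i)W_i \;=\; \sum_{i=\sigma_n}^{\rho_n-1} g(X_i) \;+\; \sum_{i=\sigma_n}^{\rho_n-1} g(X_i)\bigl(\hat W_i - 1\bigr). \]

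For the first (deterministic-mean) piece I apply Proposition~1 to $g$. By Lemma~1 (i) $\lambda(x)$ is increasing and, as noted inside the proof of Proposition~1, $\lambda(x)/x^3$ is decreasing; therefore $g = f/\lambda$ satisfies the monotonicity hypothesis of Proposition~1 with $\beta$ replaced by $\beta+3$. Thus
\[ \sum_{i=\sigma_n}^{\rho_n-1} g(X_i) \;\stackrel 1\sim\; \int_{r_n}^{s_n} \frac{g(x)}{\nu(x)}\,dx \;=\; \int_{r_n}^{s_n} \frac{f(x)}{\mu(x)}\,dx, \]
using $\nu = \mu/\lambda$. The second piece $M := \sum_{i=\sigma_n}^{\rho_n-1} g(X_i)(\hat W_i-1)$ is a mean-zero martingale with respect to the filtration generated by $(X_j)_{j\ge 0}$ together with $\hat W_0,\ldots,\hat W_{i-1}$, and since $\mathrm{Var}(\hat W_i)=1$, optional sampling gives
\[ \mathbf E[M^2] \;=\; \mathbf E\Big[\sum_{i=\sigma_n}^{\rho_n-1} g(X_i)^2\Big] \;\le\; \Big(\max_{x\in[r_n,s_n]} g(x)\Big)\cdot \mathbf E\Big[\sum_{i=\sigma_n}^{\rho_n-1} g(X_i)\Big]. \]

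The one point that requires care, and which I regard as the main obstacle, is to show $\max_{x\in[r_n,s_n]} g(x) = o\!\big(\int_{r_n}^{s_n} f/\mu\,dx\big)$. Since $x^{\beta+3} g(x)$ is increasing, the maximum is at most $\gamma^{-\beta-3}\, g(s_n)$; on the other hand, restricting the integral to $[\gamma s_n, s_n]$ and using $f(x)\ge \gamma^\beta f(s_n)$ (from $x^{-\beta}f$ decreasing) together with $\mu(x)\le \mu(s_n)$ yields
\[ \int_{r_n}^{s_n}\frac{f(x)}{\mu(x)}\,dx \;\ge\; (1-\gamma)\gamma^{\beta}\,\frac{f(s_n)}{\kappa(s_n)}, \]
so the ratio in question is $O\big(\kappa(s_n)/\lambda(s_n)\big)$, which vanishes by Lemma~1 (iii). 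Chebyshev's inequality then gives $M = o_P\!\big(\int f/\mu\,dx\big)$, and combined with the first piece this shows convergence in probability of $\sum f(X_i)W_i$ to the right-hand side. For $L^1$-convergence, Fubini and the independence of $\hat W$ from $X$ give $\mathbf E\big[\sum g(X_i)\hat W_i\big] = \mathbf E\big[\sum g(X_i)\big] \sim \int f/\mu\,dx$, so F.\ Riesz's criterion (as invoked inside the proof of Proposition~1) upgrades the convergence to $L^1$.

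The second assertion follows from the same Fubini identity applied to the sum from $\rho_n$ to $\tau_n-1$: $\mathbf E\big[\int_{\tilde\rho_n}^{\tilde\tau_n} f(N_n(t))\,dt\big] = \mathbf E\big[\sum_{i=\rho_n}^{\tau_n-1} g(X_i)\big]$. The second claim of Proposition~1 applied to $g$ bounds the right-hand side by $O\!\big(\int_2^{r_n} g(x)\,dx/\nu(x)\big) = O\!\big(\int_2^{r_n} f(x)/\mu(x)\,dx\big)$, which is the desired estimate.
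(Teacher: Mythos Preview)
Your approach is essentially the same as the paper's: both write $\int_{\tilde\sigma_n}^{\tilde\rho_n}f(N_n(t))\,dt=\sum_{i=\sigma_n}^{\rho_n-1}f(X_i)W_i$, split this into the conditional mean $\sum g(X_i)$ with $g=f/\lambda$ (handled by Proposition~1) plus a fluctuation term whose second moment equals $\mathbf E\big[\sum g(X_i)^2\big]$, and derive the second assertion from Proposition~1's second claim applied to $g$.

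One step fails as written. From ``$x^{\beta+3}g(x)$ increasing'' you only obtain $g(x)\le(s_n/x)^{\beta+3}g(s_n)$ for $x\in[r_n,s_n]$, and since the hypothesis is $r_n\le\gamma s_n$ (a \emph{lower} bound on $s_n/r_n$, not an upper one), the ratio $s_n/r_n$ is unbounded and your claimed bound $\max_{[r_n,s_n]}g\le\gamma^{-\beta-3}g(s_n)$ is not justified. The repair is to localise at the maximiser rather than at the right endpoint: let $x_n\in[r_n,s_n]$ be a point where $g$ is maximal and choose $\xi_n$ with $r_n\le\gamma\xi_n\le x_n\le\xi_n\le s_n$ (such $\xi_n$ exists precisely because $r_n\le\gamma s_n$). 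Then $g(x_n)\le(\xi_n/x_n)^{\beta+3}g(\xi_n)\le\gamma^{-\beta-3}g(\xi_n)$, while your integral lower bound, carried out over $[\gamma\xi_n,\xi_n]$ instead of $[\gamma s_n,s_n]$, gives $\int_{r_n}^{s_n}f/\mu\ge(1-\gamma)\gamma^{\beta}f(\xi_n)/\kappa(\xi_n)$. The ratio is then $O\big(\kappa(\xi_n)/\lambda(\xi_n)\big)=o(1)$ by Lemma~1~(iii), since $\xi_n\ge r_n\to\infty$. This $\xi_n$-localisation is exactly what the paper does (borrowing it from the proof of Proposition~1); once it is in place, the rest of your argument goes through unchanged.
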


\paragraph{Example.} For $f(x)\equiv 1$ we obtain under the assumptions of Proposition \ref{Prp2}
\[ \tilde \rho_n -\tilde \sigma_n \stackrel 1 \sim \int_{r_n}^{s_n} \frac{dx}{\mu(x)} \ , \]
in particular,
\[ \tilde \rho_n \stackrel 1 \sim \int_{r_n}^{n} \frac{dx}{\mu(x)}  \]
as $n \to \infty$. \qed

\begin{proof}[Proof of Proposition \ref{Prp2}] We have
\[\int_{\tilde \sigma_n}^{\tilde \rho_n} f(N_n(t))\, dt = \sum_{i=\sigma_n}^{\rho_n-1} f(X_i)W_i  \]
and
\[\mathbf E\Big[ \sum_{i=\sigma_n}^{\rho_n-1} f(X_i)W_i \mid X\Big] = \sum_{i=\sigma_n}^{\rho_n-1} \frac{f(X_i)}{\lambda(X_i)} \ . \]
Thus for any $\eta >0$ by the Markov property
\[Ê\mathbf E \Big[ \Big(\int_{\tilde \sigma_n}^{\tilde \rho_n} f(N_n(t))\, dt -\sum_{i=\sigma_n}^{\rho_n-1} \frac{f(X_i)}{\lambda(X_i)}\Big)^2\Big] = \mathbf E \Big[ \sum_{i=\sigma_n}^{\rho_n-1} \frac{f(X_i)^2}{\lambda(X_i)^2} \Big]=\mathbf E [ R_n ]\ ,\]
where we now set
\[ R_n:= \sum_{i=\sigma_n}^{\rho_n-1} \frac{f(X_i)^2}{\lambda(X_i)^2} \frac{\Delta_{i+1} I_{\{\Delta_{i+1}\le \eta X_i\}}}{\nu_\eta(X_i)} \ . \]
Using \eqref{Rn} with $\varepsilon =1$, it follows
\[ \mathbf E \Big[ \Big(\int_{\tilde \sigma_n}^{\tilde \rho_n} f(N_n(t))\, dt -\sum_{i=\sigma_n}^{\rho_n-1} \frac{f(X_i)}{\lambda(X_i)}\Big)^2\Big] \le 2\int_{r_n}^{s_n} \frac{f(x)^2}{\lambda(x)^2\nu(x)} \, dx=  2 \int_{r_n}^{s_n} \frac{f(x)^2}{\lambda(x)\mu(x)} \, dx \ . \]
Furthermore, since we are in the dustless case, due to Lemma \ref{Lmm1} (iii) we have
\[ \mathbf E \Big[ \Big(\int_{\tilde \sigma_n}^{\tilde \rho_n} f(N_n(t))\, dt -\sum_{i=\sigma_n}^{\rho_n-1} \frac{f(X_i)}{\lambda(X_i)}\Big)^2\Big] \le o\Big(\int_{r_n}^{s_n} \frac{xf(x)^2}{\mu(x)^2} \, dx \Big) \ .\]
Letting $x_n$ be the point, where $xf(x)/\mu(x)$ takes its maximum in the interval $[r_n,s_n]$, we obtain in the same manner as in the preceding proof
\[ \mathbf E \Big[ \Big(\int_{\tilde \sigma_n}^{\tilde \rho_n} f(N_n(t))\, dt -\sum_{i=\sigma_n}^{\rho_n-1} \frac{f(X_i)}{\lambda(X_i)}\Big)^2\Big] = o\Big( \frac {x_nf(x_n)}{\mu(x_n)} \int_{r_n}^{s_n} \frac{f(x)}{\mu(x)} \, dx\Big) = o\Big( \Big(\int_{r_n}^{s_n} \frac{f(x)}{\mu(x)} \, dx\Big)^2\Big)\ .\]
On the other hand, Proposition \ref{Prp1} implies
\[ \sum_{i=\sigma_n}^{\rho_n-1} \frac{f(X_i)}{\lambda(X_i)} \stackrel 1 \sim \int_{r_n}^{s_n} \frac{f(x)}{\lambda(x)} \frac{dx}{\nu(x)} =\int_{r_n}^{s_n} \frac{f(x)}{\mu(x)} \, dx \ . \]
These last two formulas imply our first statement. Moreover
\[ \mathbf E\Big[ \int_{\tilde \rho_n}^{\tilde \tau_n} f(N_n(t))\, dt \Big] = \mathbf E \Big[ \sum_{i= \rho_n}^{\tau_n-1} \frac {f(X_i)}{\lambda(X_i)} \Big]\ , \]
therefore our second claim follows from the second statement of Proposition \ref{Prp1}.
\end{proof}

\noindent
Now we turn to the special case that $f(x)=1/x$, $x \ge 2$, where  Proposition \ref{Prp1} can be considerably extended. Here we content ourselves with the case $s_n=n$. Observe that the two next statements do not imply each other.

\begin{Proposition} \label{Prp3} Assume that the $\Lambda$-coalescent has no dust component.
 Let $2\le r_n\le    n$, $n \ge 1$, be a sequence of numbers fulfilling 
\begin{align*}  
r_n \le \gamma n 
\end{align*}
for all $n$ sufficiently large and some $\gamma <1$.  Also assume that
\begin{align*} \tilde \rho_n \stackrel P \to 0 
\end{align*}
as $n \to \infty$. Then
\[ \sum_{i=0}^{\rho_n-1} \frac 1{X_i} \stackrel 1\sim \log \frac{\kappa( n)}{\kappa(r_n)}  \]
and
\[\max_{1\le j \le \rho_n}\Big|\sum_{i=0}^{j-1} \frac 1{X_i} - \log \frac{\kappa(n)}{\kappa(X_j)}\Big| =o_P(1) \ .\]
\end{Proposition}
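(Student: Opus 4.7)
The first statement is a direct application of Proposition~\ref{Prp1} with $f(x)=1/x$, $\sigma_n=0$, $s_n=n$; the monotonicity hypothesis holds with $\beta=1$, since $x^\beta f(x)=1$ is non-decreasing and $x^{-\beta}f(x)=x^{-2}$ is decreasing. Proposition~\ref{Prp1} gives
$$\sum_{i=0}^{\rho_n-1}\frac{1}{X_i}\stackrel 1\sim \int_{r_n}^n\frac{dx}{x\nu(x)}=\int_{r_n}^n\frac{\lambda(x)}{x\mu(x)}\,dx.$$
By Lemma~\ref{Lmm1}~(ii), for fixed $0<\chi<1$, $\lambda(x)/(x\mu(x))=(\kappa'(x)/\kappa(x))(1+O(x^{-\chi}))$, and since $r_n\to\infty$ (forced by $\tilde\rho_n\stackrel P\to 0$), the relative error is $o(1)$ uniformly on $[r_n,n]$; hence the right-hand integral equals $(1+o(1))\log(\kappa(n)/\kappa(r_n))$, proving the first claim.

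For the uniform second claim, use the telescoping identity $\log(\kappa(n)/\kappa(X_j))=\sum_{i=0}^{j-1}\int_{X_{i+1}}^{X_i}(\kappa'(x)/\kappa(x))\,dx$ to reduce to $\max_{j\le\rho_n}|\sum_{i<j}\zeta_i|=o_P(1)$ with $\zeta_i:=1/X_i-\int_{X_{i+1}}^{X_i}(\kappa'(x)/\kappa(x))\,dx$. Split $\zeta_i=\xi_i+\omega_i$, where $\xi_i:=1/X_i-\Delta_{i+1}/(X_i\nu(X_i))$ is a martingale increment (since $\mathbf E[\Delta_{i+1}\mid X_i]=\nu(X_i)$) and $\omega_i:=\Delta_{i+1}/(X_i\nu(X_i))-\int_{X_{i+1}}^{X_i}(\kappa'(x)/\kappa(x))\,dx$ is the Riemann-type remainder. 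It suffices to show $\max_j|M_j|=o_P(1)$ for $M_j:=\sum_{i<j}\xi_i$ and $\max_j|\sum_{i<j}\omega_i|=o_P(1)$.

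For the martingale, compute the second factorial moment $\sum_k(k-1)^2\binom b k\lambda_{b,k}=b(b-1)\Lambda([0,1])-\mu(b)$ to get $\mathbf E[\Delta_{i+1}^2\mid X_i=b]=(b(b-1)\Lambda([0,1])-\mu(b))/\lambda(b)$, hence $\mathrm{Var}(\xi_i\mid X_i=b)\le \Lambda([0,1])/\lambda(b)$ (using $\nu\ge 1$). Thus $\langle M\rangle_{\rho_n}\le \Lambda([0,1])\sum_{i<\rho_n}1/\lambda(X_i)$, and since $1/\lambda(X_i)$ is the conditional expectation of the waiting time $W_i$, we have $\mathbf E[\sum_{i<\rho_n}1/\lambda(X_i)]=\mathbf E[\tilde\rho_n]$. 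Proposition~\ref{Prp2} with $f\equiv 1$ yields $\tilde\rho_n\stackrel 1\sim\int_{r_n}^n dx/\mu(x)$, which combined with $\tilde\rho_n\stackrel P\to 0$ gives $\mathbf E[\tilde\rho_n]\to 0$; Doob's $L^2$ inequality then yields $\max_j|M_j|=o_P(1)$.

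The Riemann remainder $\sum\omega_i$ is controlled on the event $G_n=\{\Delta_{i+1}\le\eta X_i\text{ for all }i<\rho_n\}$, which by Lemma~\ref{Lmm3}~(ii) has probability tending to $1$ for each fixed $\eta>0$, and by a diagonal argument also for some $\eta=\eta_n\downarrow 0$. On $G_n$, Lemma~\ref{Lmm1}~(ii) gives $\kappa'(x)/\kappa(x)=(1+O(x^{-\chi}))/(x\nu(x))$, and the Riemann-monotonicity estimate~\eqref{fint1} applied to $g(x):=1/(x\nu(x))$ (which inherits the monotonicity hypothesis with $\beta+3$ in place of $\beta$) yields $\Delta_{i+1}/(X_i\nu(X_i))=(1+O(\eta_n))\int_{X_{i+1}}^{X_i}dx/(x\nu(x))$; telescoping,
$$\max_j\Big|\sum_{i<j}\omega_i\Big|\le \big(O(\eta_n)+O(r_n^{-\chi})\big)\int_{r_n}^n\frac{dx}{x\nu(x)}.$$
The main obstacle is that $\int_{r_n}^n dx/(x\nu(x))\sim \log(\kappa(n)/\kappa(r_n))$ need not vanish in full generality (e.g.\ for coalescents coming down from infinity with $\alpha\in(1,2)$ and $r_n=\gamma n$ it is bounded but strictly positive), so the crude multiplicative bound is not by itself $o_P(1)$. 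This is resolved by choosing $\eta_n\downarrow 0$ slowly enough to preserve $\mathbf P(G_n)\to 1$ while $\eta_n\log(\kappa(n)/\kappa(r_n))\to 0$, which is possible since $\max_{i<\rho_n}\Delta_{i+1}/X_i\stackrel P\to 0$ by Lemma~\ref{Lmm3}~(ii).
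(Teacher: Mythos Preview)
Your treatment of the first claim and of the martingale part of the second claim is correct, and the martingale bound via $\mathbf E[\langle M\rangle_{\rho_n}]\le\Lambda([0,1])\,\mathbf E[\tilde\rho_n]\to 0$ is in fact a bit slicker than the paper's route through $c/\kappa(r_n/2)$.

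The gap is in the Riemann remainder $\sum_{i<j}\omega_i$. Your multiplicative bound from \eqref{fint1} gives only
\[
\max_{j\le\rho_n}\Big|\sum_{i<j}\omega_i\Big|\le C\eta_n\,a_n+o(1),\qquad a_n:=\log\frac{\kappa(n)}{\kappa(r_n)},
\]
and you need $\eta_n a_n\to 0$ together with $\mathbf P(\max_i\Delta_{i+1}/X_i>\eta_n)\to 0$. The mere fact that $\max_i\Delta_{i+1}/X_i\stackrel P\to 0$ does \emph{not} allow you to choose $\eta_n$ decaying faster than $1/a_n$: convergence in probability gives no rate. Even the quantitative bound hidden in the proof of Lemma~\ref{Lmm3}~(ii), namely $\mathbf P(\max_i\Delta_{i+1}/X_i>\eta)\le C\eta^{-2}\mathbf E[\tilde\rho_n\wedge 2]+o(1)$, would require $a_n^2\,\mathbf E[\tilde\rho_n]\to 0$, and this fails for legitimate choices of $r_n$. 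For instance, take a regularly varying coalescent with exponent $\alpha\in(1,2)$ and $r_n=(\log n)^{1/(\alpha-1)}$ (which satisfies all hypotheses, since the coalescent comes down from infinity): then $a_n\sim(\alpha-1)\log n$ while $\mathbf E[\tilde\rho_n]\sim c\,r_n^{1-\alpha}=c/\log n$, so $a_n^2\,\mathbf E[\tilde\rho_n]\asymp\log n\to\infty$.

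The paper avoids this by replacing the crude first-order Riemann estimate with an exact sandwich: since $(x-1)\nu(x)$ is \emph{monotone increasing} (Lemma~\ref{Lmm1}~(i)), one has
\[
\frac{\Delta_{i+1}}{(X_i-1)\nu(X_i)}\le\int_{X_{i+1}}^{X_i}\frac{dx}{(x-1)\nu(x)}\le\frac{\Delta_{i+1}}{(X_{i+1}-1)\nu(X_{i+1})},
\]
and the width of this sandwich is bounded by a constant times $\Delta_{i+1}^2/(X_i^2\nu(X_i))$. The $L^1$-norm of $\sum_{i<\rho_n}\Delta_{i+1}^2/(X_i^2\nu(X_i))$ is then shown to be $O(1/\kappa(r_n/2))=o(1)$, with \emph{no} factor $a_n$ appearing. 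This second-order control, rather than a first-order multiplicative one, is the missing idea. (It is also why the paper works with $1/(X_i-1)$ rather than $1/X_i$: the function $x\nu(x)$ is not known to be monotone.)
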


\begin{proof}
(i) The first statement is a special case of Proposition \ref{Prp1}:  due to Lemma \ref{Lmm1} (ii) we have  $1/(x\nu(x))= \lambda(x)/(x\mu(x))\sim \kappa'(x)/\kappa(x)$ as $x\to \infty$. Therefore
\[ \int_{r_n}^n \frac {dx}{x\nu(x)}  \sim \int_{r_n}^n \frac{\kappa'(x) \, dx}{\kappa(x)} = \log \frac {\kappa(n)}{\kappa(r_n)} \ . \]

(ii) For the proof of the second statement we proceed along similar lines as in the proof of Proposition \ref{Prp1}.  Using the second factorial moment of a binomial distribution, we have as a first estimate
\begin{align*}
\mathbf E\Big[ \frac{\Delta_{i+1}^2}{X_i^2} \mid X_i=b\Big]&= \frac 1{b^2\lambda(b)} \sum_{k=2}^b (k-1)^2\binom bk \int_{[0,1]} p^k (1-p)^{b-k} \frac{\Lambda(dp)}{p^2}\\
&\le \frac 1{b^2\lambda(b)} \int_{[0,1]} \sum_{k=0}^b k(k-1)\binom bk p^k (1-p)^{b-k} \frac{\Lambda(dp)}{p^2}\\
&= \frac 1{b^2\lambda(b)} \int_{[0,1]} p^2b(b-1) \frac{\Lambda(dp)}{p^2}\\
&\le \frac{\Lambda([0,1])}{ \lambda (b)} \ .
\end{align*}
This bound yields for large $n $
\begin{align*}
\mathbf E\Big[ \sum_{i=0}^{\rho_n-1}  \frac{\Delta_{i+1}^2}{X_i^2\nu(X_i)}\Big] &\le  \sum_{i=0}^{n-1} \mathbf E \Big[ \frac 1{\mu(X_i)} \, ; \, X_i > r_n\Big]\Lambda([0,1])\\
&=   \sum_{i=0}^{n-1} \mathbf E \Big[ \frac {\Delta_{i+1}}{\mu(X_i)\nu_{1/2}(X_i)}I_{\{\Delta_{i+1} \le X_i/2\}} \, ; \, X_i > r_n\Big]\Lambda([0,1])\\
&\le c  \mathbf E \Big[\sum_{i=0}^{\rho_n-1} \frac { \Delta_{i+1}}{\mu(X_i)\nu(X_i)}I_{\{\Delta_{i+1} \le X_i/2\}}\Big]
\end{align*}
with $c:=\Lambda([0,1]) \sup_{b \ge 4} \nu(b)/\nu_{1/2}(b) <\infty$. In view of Lemma \ref{Lmm1} (i) the function \[ x \mapsto (x-1)\mu(x)\nu(x)/x=\kappa(x)^2/(\lambda(x)/x(x-1))\] is  increasing, which entails $\Delta_{i+1}/( \mu(X_i)\nu(X_i)) \le \int_{X_{i+1}}^{X_i}  x((x-1)\mu(x)\nu(x))^{-1} \, dx$. By means of Lemma \ref{Lmm1} (ii) $\mu(x)\nu(x) \sim \kappa(x)^2/\kappa'(x)$, therefore
\begin{align*}
\mathbf E\Big[ \sum_{i=0}^{\rho_n-1}  &\frac{\Delta_{i+1}^2}{X_i^2\nu(X_i)}\Big] \le c   \int_{r_n/2}^n 
\frac{x\, dx}{ (x-1)\mu(x)\nu(x)}  \sim c   \int_{r_n/2}^n \frac {\kappa'(x)}{\kappa^2(x)} \, dx  \le  \frac{c}{\kappa(r_n/2)}  \ ,
\end{align*}
and consequently, since $\kappa(x)\to \infty$ in the dustless case,
\begin{align*}
\mathbf E\Big[ \sum_{i=0}^{\rho_n-1}  \frac{\Delta_{i+1}^2}{X_i^2\nu(X_i)}\Big]  = o(1) \ .
\end{align*}

(iii) Now we consider the martingale $M=(M_k)_{k \ge 0}$ given by $M_0=0$ a.s. and
\[ M_k := \sum_{i=0}^{k\wedge \tau_n-1} \Big( \frac {\Delta_{i+1}}{(X_i-1)\nu(X_i)}- \frac 1{X_i-1}  \Big) \ , \ k \ge 1 \ . \]
By means of the optional sampling theorem, since $\rho_n$ is a stopping time, we have, because $\nu(b) \ge 1$ for all $b \ge 2$,
\begin{align*}
\mathbf E \Big[ M_{\rho_n}^2\Big] \le \mathbf E\Big[ \sum_{i=0}^{\rho_n-1}  \frac{\Delta_{i+1}^2}{X_i^2\nu(X_i)^2}\Big] =o(1) \ .
\end{align*}
Thus by means of Doob's maximal inequality
\begin{align*} \max_{1\le j\le\rho_n}\Big| \sum_{i=0}^{j-1} \frac {\Delta_{i+1} }{(X_i-1)\nu(X_i)}-\sum_{i=0}^{j-1} \frac 1{X_i-1}  \Big|=o_P(1) \ . 
\end{align*}
Also for $j \le \rho_n$
\[ 0\le \sum_{i=0}^{j-1}  \frac 1{X_i-1}- \sum_{i=0}^{j-1} \frac 1{X_i} \le \sum_{m=X_{\rho_{n}-1}}^\infty \Big(\frac 1{m-1} - \frac 1m\Big) = \frac 1{X_{\rho_n-1}-1} \le \frac 1{r_n-1}\ , \]
and consequently
\begin{align} \max_{1\le j\le\rho_n}\Big| \sum_{i=0}^{j-1} \frac {\Delta_{i+1} }{(X_i-1)\nu(X_i)}-\sum_{i=0}^{j-1} \frac 1{X_i}  \Big|=o_P(1) \ . 
\label{Doob}
\end{align}

(iv) Note that in view of Lemma \ref{Lmm1} (i) the function $(x-1)\nu(x)= \kappa(x)/(\lambda(x)/x(x-1))$ is increasing and $\nu(x)/(x-1)^2= (x/(x-1))\cdot(\kappa(x)/(x-1))/\lambda(x)$ is decreasing. For $X_i\ge 2$ this yields  
\begin{align*}
0 &\le \frac 1{(X_{i+1}-1)\nu(X_{i+1})} - \frac 1{(X_i-1)\nu(X_i)} 
\\& = \frac {(X_{i+1}-1)^2}{\nu(X_{i+1})} \frac 1{(X_{i+1}-1)^3} - \frac {(X_{i}-1)^2}{\nu(X_i)} \frac 1{(X_{i}-1)^3}\\
&\le \frac {(X_{i}-1)^2}{\nu(X_i)} \Big(\frac 1{(X_{i+1}-1)^3}-\frac 1{(X_{i}-1)^3}\Big)\\
&= \frac{((X_i-1)^2+(X_{i}-1)(X_{i+1}-1)+(X_{i+1}-1)^2)\Delta_{i+1}}{\nu(X_i)(X_i-1)(X_{i+1}-1)^3}\\
&\le \frac {3(X_i-1)}{\nu(X_i)(X_{i+1}-1)^3} \Delta_{i+1}\\
&\le \frac {24 X_i}{\nu(X_i) X_{i+1}^3} \Delta_{i+1} \ .
\end{align*}
It follows
\begin{align*}
 \mathbf E\Big[ \max_{1\le j \le \rho_n}& \Big|\sum_{i=0}^{j-1} \frac {\Delta_{i+1} }{(X_i-1)\nu(X_i)}- \sum_{i=0}^{j-1}\frac {\Delta_{i+1}}{(X_{i+1}-1)\nu(X_{i+1})}\Big| \, ; \, \Delta_{i+1} \le X_i/2 \text{ for all } i < \rho_n \Big]\\
 &\le \mathbf E\Big[ \sum_{i=0}^{\rho_n-1} \Big|\frac {1}{(X_i-1)\nu(X_i)}-\frac {1}{(X_{i+1}-1)\nu(X_{i+1})}\Big|\Delta_{i+1}\, ; \, \Delta_{i+1} \le X_i/2 \text{ for all } i < \rho_n \Big]\\
 & \le \mathbf E \Big[ \sum_{i=0}^{\rho_n-1} \frac{8\cdot 24}{X_{i}^2 \nu(X_i)}Ê\Delta_{i+1}^2 \Big] = o(1) \ .
\end{align*}
Also, since $(x-1)\nu(x)$ is increasing
\[ \frac 1{(X_i-1)\nu(X_i)}\Delta_{i+1} \le \int_{X_{i+1}}^{X_i} \frac {dx}{(x-1)\nu(x)} \le \frac 1{(X_{i+1}-1)\nu(X_{i+1})}\Delta_{i+1}\ , \]
and we obtain 
\begin{align}\max_{1\le j \le \rho_n } \Big| \sum_{i=0}^{j-1} \frac 1{(X_i-1)\nu(X_i)}\Delta_{i+1}- \int_{X_{j}}^n \frac{dx} {(x-1)\nu(x)}\Big| = o_P(1) 
\label{Doob2}
\end{align}
on the event that $\Delta_{i+1} \le X_i/2 \text{ for all } i < \rho_n $. Lemma \ref{Lmm3} (ii) says that the complementary event has an asymptotically vanishing probability.

(v) Finally, by Lemma \ref{Lmm1} (ii) with $ r_n\le y \le n$ for $\chi < 1$
\begin{align*}
\int_{y}^n \frac {dx}{(x-1) \nu(x)} &= \int_{y}^n \frac {\lambda (x)/x^2 }{\mu(x)/x} \, dx + \int_y^n \frac{dx}{x(x-1)\nu(x)}\\&= \int_y^{n} \frac{\kappa'(x)}{\kappa(x)}\, dx + O\Big(\int_{r_n}^n \frac{\kappa'(x)\, dx}{\kappa(x)x^{\chi}}\Big)+ O(r_n^{-1})  \ ,
\end{align*}
and recalling $r_n \to \infty$
\[ \int_{r_n}^n \frac{\kappa'(x)\, dx}{\kappa(x)x^{\chi}}\sim \int_{r_n}^n \frac{  dx}{x\nu(x)x^{\chi}} \le \int_{r_n}^n \frac{dx}{x^{1+\chi}} =o(1) \ . \] 
Altogether  we obtain
\[ \max_{r_n \le y \le n} \Big|\int_{y}^n \frac {dx}{(x-1)\nu(x)} - \log \frac{\kappa(n)}{\kappa(y)} \Big| =o(1) \ .\]
Combining this formula with \eqref{Doob} and \eqref{Doob2} and recalling the definition of $\rho_n$,  we arrive at
\[ \max_{1 \le j \le \rho_n} \Big| \sum_{i=0}^{j-1}\frac 1{X_i} - \log \frac{\kappa(n)}{\kappa(X_j)} \Big| = o_P(1) \ . \]
This is our claim.
\end{proof}

\section{Proof of Theorem 1}

Under the assumptions of Proposition \ref{Prp2} we have
\begin{align}
\ell_n^*:=\int_0^{\tilde \rho_n} N_n(t)\, dt \stackrel 1\sim \int_{r_n}^n \frac x{\mu(x)} \, dx \ .
\label{totLaenge}
\end{align}
In particular, this formula holds for   $r_n:=cn$ with $0<c<1$ as anticipated in the introduction's remark. Here the assumptions of Proposition \ref{Prp2} are satisfied because of Lemma \ref{Lmm3} and
\[ \int_{cn}^n \frac {dx}{\mu(x)} \le (1-c) \frac n{\mu(cn)} =o(1)\ , \]
which in turn is valid in view of Lemma \ref{Lmm1} (iii).

In order to fill the gap up to $\ell_n$, we construct a distinguished sequence  of real numbers. 
We  construct the numbers $2 \le r_n\le n $, $n \ge 1$, satisfying 
\begin{align} \int_{r_n}^n \frac{dx}{\mu(x)} \to 0 \quad \text{and}\quad  \int_2^{r_n} \frac{x}{\mu(x)} \, dx = o \Big( \int_{r_n}^n \frac x{\mu(x)}\, dx\Big)  
\label{cn}
\end{align}
as $n \to \infty$. From Lemma \ref{Lmm3} we get that $\tilde \rho_n =o_P(1)$. Also, since by Lemma \ref{Lmm1} (i) $x/\mu(x)$ is decreasing
\[ \int_2^{r_n} \frac x{\mu(x)}\, dx \ge \frac{r_n(r_n-2)}{\mu(r_n)} \quad \text{and} \quad   \int_{r_n}^n \frac x{\mu(x)}\, dx \le \frac{r_n(n-r_n)} {\mu(r_n)}\ . \]
Therefore the second statement in \eqref{cn} entails $r_n -2= o(n-r_n)$ as $n \to \infty$ and consequently $r_n=o(n)$. Hence the sequence $r_n$, $n \ge 1$,  fulfils  all  requirements of Proposition \ref{Prp2}.

  For the construction of the numbers $r_n$ note that from Lemma \ref{Lmm1} (i) we have  $x/\mu(x) \ge 2/(\mu(2)(x-1))$ for $x\ge 2$ and consequently
\begin{align} 
\int_2^\infty \frac{x}{\mu(x)}\, dx = \infty \ .
\label{xmux}
\end{align}
We distinguish two cases. If $\int_2^\infty \frac {dx}{\mu(x)} <\infty$, then the required sequence is easily obtained, because the first condition of \eqref{cn} is fulfilled for any divergent sequence $r_n\le n$ and the second one by reason of \eqref{xmux}, if only $r_n$ is diverging slowly enough. 

Thus let us assume $\int_2^\infty \frac {dx}{\mu(x)} =\infty$ and  let $r_{n,m}\ge 2$ for given $m\in \mathbb N$ be the solution  of the equation
\[ \int_{r_{n,m}}^n  \frac{dx}{\mu(x)}=\frac 1m \ , \]
which  exists for $n\ge 3$ and $m\ge 1/\int_2^3  \frac{dx}{\mu(x)}$. Since $\int_2^\infty \frac {dx}{\mu(x)} =\infty$, we have $r_{n,m}\to \infty$ as $n \to \infty$. It follows 
\[ \int_{r_{n,m}}^n  \frac{x}{\mu(x)} \, dx \ge r_{n,m}/m  \ \text{ and }\ \int_2^{r_{n,m}} \frac{x}{\mu(x)} \, dx =o(r_{n,m})\] as $n\to \infty$ because of $x=o(\mu(x))$ from Lemma \ref{Lmm1} (iii). Therefore there are natural numbers $n_1<n_2 < \cdots$ such that 
\[ \int_2^{r_{n,m}} \frac{x}{\mu(x)} \, dx\le \frac 1m \int_{r_{n,m}}^n \frac{x}{\mu(x)} \, dx   \]
for all $n \ge n_m$. Now letting $r_n:= r_{n,m}$ for  $n=n_m, \ldots,n_{m+1}-1$, we obtain
\[ \int_{r_{n}}^n  \frac{dx}{\mu(x)}\le\frac 1m  \quad \text{and} \quad \int_2^{r_{n}} \frac{x}{\mu(x)} \, dx\le \frac 1m \int_{r_{n}}^n \frac{x}{\mu(x)} \, dx \]
for all $n\ge n_m$.  This implies \eqref{cn}.

Applying now Proposition \ref{Prp2} with $f(x)=x$, we obtain from \eqref{cn}
\[ \int_{0}^{\tilde \rho_n} N_n(t)\, dt \stackrel 1 \sim \int_{r_n}^n \frac{x}{\mu(x)} \, dx \sim \int_{2}^n \frac{x}{\mu(x)} \, dx\]
and
\[ \mathbf E\Big[ \int_ {\tilde \rho_n}^{\tilde \tau_n} N_n(t)\, dt \Big] = O\Big( \int_{2}^{r_n} \frac{x}{\mu(x)}\, dx\Big) =o \Big( \int_{2}^{n} \frac{x}{\mu(x)}\, dx\Big)Ê\ . \]
This implies our claim. \qed

\section{Proof of Theorem 2}

Again, let   $r_n$, $n \ge 1$, be any sequence fulfilling the assumptions of Proposition \ref{Prp1}. We investigate the lengths 
\begin{align} \widehat \ell_n^* := \int_0^{\tilde \rho_n} \widehat N_n(t)\, dt = \sum_{i=0}^{\rho_n-1} W_iY_i \ , 
\label{widehatell}
\end{align}
which are the total lengths of the external branches up to the $\rho_n$-th merger. Here $Y_i$, $i \ge 0$, denotes the number of external branches extant after the first $i$ merging events and $W_i$ as above the waiting time at the state $X_i$. 
In the proof we approximate $\widehat \ell_n^* $ by its conditional expectation given the block-counting process $N_n$, which in turn can be handled by means of Proposition \ref{Prp2} and 3.
We shall employ  the representation
\begin{align} Y_i = \sum_{k=1}^n I_{\{\zeta_k\ge i\}} 
\label{Yi}
\end{align}
where $\zeta_k$ denotes the number of  coalescent events before the $k$-th external branch  (out of $n$) merges with some other branches within the coalescent. 

\begin{Lemma}  \label{Lmm4} We have  for $i,j \ge 0$, $k,l=1, \ldots,n$  
\[ \mathbf P( \zeta_k \ge i \mid N_n) =  \frac{X_i-1}{n-1} \prod_{m=0}^{i-1} \Big( 1- \frac 1{X_m}\Big)\ \text{ a.s.} \]
and for $k \neq l$  
\[ \mathbf P( \zeta_k \ge i, \zeta_l \ge j \mid N_n) \le \mathbf P( \zeta_k \ge i \mid N_n)\mathbf P( \zeta_l \ge j \mid N_n) \ \text{ a.s.} \]
\end{Lemma}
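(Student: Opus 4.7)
The approach is to exploit the exchangeability of the $\Lambda$-coalescent. Given the block-counting process $N_n$ (equivalently, given the sequence $X_0>X_1>\cdots$ together with the waiting times), invariance of the coalescent under permutations of the labels $\{1,\ldots,n\}$ forces the following: at step $m+1$ the particular set of blocks that coalesces is a uniformly random subset of size $\Delta_{m+1}+1$ drawn from the $X_m$ blocks present just before that merger, and the subsets selected at different steps are conditionally independent given $N_n$. This is the only nontrivial input; it follows from the standard construction of the $\Lambda$-coalescent via the rates $\lambda_{b,k}$, which depend only on $b$ and $k$ and not on the identity of the merging blocks.

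Granted this, the first identity is immediate. The event $\{\zeta_k\ge i\}$ simply says that branch $k$ is not selected into the merging subset at any of the first $i$ mergers. Its single-step conditional probability equals $\binom{X_m-1}{\Delta_{m+1}+1}/\binom{X_m}{\Delta_{m+1}+1}=(X_{m+1}-1)/X_m$, and conditional independence across steps yields $\mathbf P(\zeta_k\ge i\mid N_n)=\prod_{m=0}^{i-1}(X_{m+1}-1)/X_m$. A telescoping rearrangement (separate the last numerator and rebalance with $X_0-1=n-1$) puts this in the stated form $\frac{X_i-1}{n-1}\prod_{m=0}^{i-1}(1-1/X_m)$.

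For the second assertion, assume without loss of generality $i\le j$. The joint event requires that both branches $k,l$ avoid the merging subset at each of the first $i$ mergers and that branch $l$ alone avoids it at steps $i+1,\ldots,j$. Conditional independence together with the identity $\binom{X_m-2}{\Delta_{m+1}+1}/\binom{X_m}{\Delta_{m+1}+1}=(X_{m+1}-1)(X_{m+1}-2)/[X_m(X_m-1)]$ give
\[
\mathbf P(\zeta_k\ge i,\zeta_l\ge j\mid N_n)=\prod_{m=0}^{i-1}\frac{(X_{m+1}-1)(X_{m+1}-2)}{X_m(X_m-1)}\prod_{m=i}^{j-1}\frac{X_{m+1}-1}{X_m}.
\]
Comparing factor by factor with the product formula for the individual probabilities, the claim reduces to the per-step inequality $\frac{X_{m+1}-2}{X_m-1}\le\frac{X_{m+1}-1}{X_m}$, which, after cross-multiplying, is $X_{m+1}\le X_m+1$ and holds trivially since $X_{m+1}<X_m$.

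The only real obstacle is writing the exchangeability step cleanly; once it is in hand, the identity and the inequality are straightforward counting, and the boundary cases ($X_{m+1}<2$) cause no trouble since either the hypergeometric numerator vanishes or the event in question is impossible.
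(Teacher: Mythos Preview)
Your proof is correct and follows essentially the same approach as the paper's: both exploit exchangeability to argue that, given $N_n$, the set of blocks merging at each step is a uniform random subset of the appropriate size, and then iterate via the Markov property. The paper packages the computation slightly differently---it first derives a general formula $\mathbf P(\zeta_A\ge i\mid N_n)=\frac{(X_i-1)\cdots(X_i-a)}{(X_0-1)\cdots(X_0-a)}\prod_{m=0}^{i-1}(1-a/X_m)$ for arbitrary $|A|=a$ and then specializes to $a=1,2$, while you compute the $a=1,2$ cases directly as step-by-step products; and for the inequality the paper uses $(1-2/X_m)\le(1-1/X_m)^2$ together with $X_i\le X_0$, which is algebraically equivalent to your per-step comparison $\frac{X_{m+1}-2}{X_m-1}\le\frac{X_{m+1}-1}{X_m}$.
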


\begin{proof} Let $A $ be a subset of $ \{1, \ldots,n\}$ with $a\ge 1$ elements, and let $\zeta_A$ be the number of mergers before one of the branches ending in $A$  gets involved into a merging event. Given $\Delta_1$ the first merger consists of a uniformly random choice of $\Delta_1+1$ members out of $X_0=n$ elements. Therefore we have    
\[ \mathbf P(\zeta_A \ge 1 \mid N_n) = \frac{ \binom {X_0-a}{\Delta_1+1}}{\binom {X_0}{\Delta_1+1}}= \frac{(X_0-a) \cdots (X_1-a)}{X_0\cdots X_1} = \frac{(X_1-1)\cdots (X_1-a)}{X_0\cdots (X_0-a+1)} \ \text{ a.s.}\]
or
\begin{align*}
\mathbf P(\zeta_A \ge 1 \mid N_n) = \frac{(X_1-1)\cdots (X_1-a)}{(X_0-1)\cdots(X_0-a)} \Big(1- \frac a{X_0}\Big)\ \text{ a.s.}
\end{align*}
Because of the Markov property  we may iterate this formula yielding
\begin{align} \mathbf P(\zeta_A \ge i \mid N_n) = \frac{(X_i-1)\cdots (X_i-a)}{(X_0-1)\cdots(X_0-a)}\prod_{m=0}^{i-1} \Big(1- \frac a{X_m}\Big) \ \text{ a.s.} 
\label{ANn}
\end{align}
In particular,   with $A=\{k\}$ and $a=1$ our first claim follows.

Similar for $k \neq l$ and $i \le j$  with $\zeta_{\{l\}}':= \zeta_{\{l\}}- \zeta_{\{k,l\}}$ and $N_{X_i}'(t):=N_n(t+W_0+\cdots+W_{i-1})$, $t \ge 0$, by means of the Markov property
\begin{align*}
\mathbf P( \zeta_k \ge i, \zeta_l \ge j \mid N_n) &= \mathbf P( \zeta_{\{k,l\}} \ge i \mid N_n) \mathbf P(\zeta_{\{l\}}' \ge j-i \mid N_{X_i}')\\&= \frac{(X_i-1)(X_i-2)}{(X_0-1)(X_0-2)} \prod_{m=0}^{i-1} \Big(1- \frac 2{X_m}\Big)\times \frac{X_j-1}{X_i-1} \prod_{m=i}^{j-1}\Big(1- \frac 1{X_m}\Big)\ \text{ a.s.}
\end{align*}
Since $X_i \le X_0$ and $(1- 2/X_m)\le (1-1/X_m)^2$, this implies
\begin{align*}
\mathbf P( \zeta_k \ge i, \zeta_l \ge j \mid N_n) &\le \frac{(X_i-1)^2}{(X_0-1)^2} \prod_{m=0}^{i-1} \Big(1- \frac 1{X_m}\Big)^2\times \frac{X_j-1}{X_i-1} \prod_{m=i}^{j-1}\Big(1- \frac 1{X_m}\Big)\\
&= \mathbf P( \zeta_k \ge i \mid N_n)\mathbf P( \zeta_l \ge j \mid N_n)  \  \text{ a.s.} \ ,
\end{align*}
which is our second claim.
\end{proof}

\begin{proof}[Proof of Theorem \ref{Thm2}] (i) First, we consider $\mathbf E[ \widehat \ell_n^* \mid N_n ] $. Due to \eqref{Yi} and Lemma \ref{Lmm4} we have
\begin{align} \mathbf E[ \widehat \ell_n^* \mid N_n ] &=  \sum_{i=0}^{\rho_n-1}\sum_{k=1}^n W_i \mathbf P(\zeta_k \ge i \mid N_n)\notag \\
&=  \frac n{n-1} \sum_{i=0}^{\rho_n-1} W_i(X_i-1)\prod_{m=0}^{i-1}\Big(1-\frac 1{X_m}\Big) \ \text{ a.s.}
\label{intcond}
\end{align}
Since $\sum_{m=0}^{i-1} X_m^{-2} \le \sum_{a=X_{i-1}}^\infty a^{-2} \le (X_{i-1}-1)^{-1}\le (r_n-1)^{-1}$ for $i\le \rho_n$ and in view of Proposition \ref{Prp3},
\begin{align*}
\prod_{m=0}^{i-1}\Big(1-\frac 1{X_m}\Big)= \exp \Big( - \sum_{m=0}^{i-1} \frac 1{X_m} + O(r_n^{-1}) \Big) = \frac{\kappa(X_i)}{\kappa(n)} \exp( o_P(1)) \ ,
\end{align*}
where the $o_P(1)$ may be taken uniformly in $i < \rho_n$ in the sense of Proposition \ref{Prp3}. Thus we obtain
\[ \mathbf E[ \widehat \ell_n^* \mid N_n ] \stackrel P \sim \frac 1{\kappa(n)}\sum_{i=0}^{\rho_n-1} W_i(X_i-1) \kappa(X_i) = \frac 1{\kappa(n)} \int_0^{\tilde \rho_n} f(N_n(t))\, dt   \]
with $f(x):= (x-1)\kappa(x)$. This function satisfies the assumption of Proposition \ref{Prp2}. Because of $f(x) \sim \mu(x)$, $r_n\to \infty$  we obtain
\begin{align}
\mathbf E[ \widehat \ell_n^* \mid N_n ] \stackrel P \sim \frac 1{\kappa(n)}\int_{r_n}^n (x-1)\kappa(x) \frac{dx}{\mu(x)} \sim \frac{ n-r_n}{\kappa(n)}= \frac {n(n-r_n)}{\mu(n)} \ .
\label{ellcond}
\end{align}

(ii)  Next, we have
\begin{align*}
\mathbf E\big[ ( \widehat\ell_n ^* & - \mathbf E[\widehat\ell_n^* \mid N_n])^2 \mid N_n] \\
&= \mathbf E\Big[ \Big( \sum_{i=0}^{\rho_n-1}\sum_{k=1}^n\Big(W_i I_{\{\zeta_k\ge i\}} - W_i \mathbf P(\zeta_k\ge i\mid N_n) \Big)\Big)^2 \mid N_n\Big]\\
&= \sum_{i,j=0}^{\rho_n-1}\sum_{k,l=1}^n W_iW_j\big(\mathbf P(\zeta_k \ge i, \zeta_l \ge j\mid N_n)- \mathbf P(\zeta_k \ge i\mid N_n)\mathbf P( \zeta_l \ge j\mid N_n) \big) \text{ a.s.}
\end{align*}
Applying Lemma \ref{Lmm4}, it follows
\begin{align*}
\mathbf E\big[ ( \widehat\ell_n ^*  - \mathbf E[\widehat\ell_n^* \mid N_n])^2 \mid N_n] &\le \sum_{i,j=0}^{\rho_n-1}\sum_{k=1}^n W_iW_j\mathbf P(\zeta_k \ge i\vee  j\mid N_n)\\
&\le\sum_{i,j=0}^{\rho_n-1}  W_iW_j \sum_{k=1}^n \mathbf P(\zeta_k \ge i \mid N_n)\\
&= \mathbf E[\widehat\ell_n^* \mid N_n] \sum_{j=0}^{\rho_n-1} W_j\\
&= \tilde \rho_n \mathbf E[\widehat\ell_n^* \mid N_n] \text{ a.s.} 
\end{align*}
Since by assumption $\tilde \rho_n =o_P(1)$ and $r_n \le \gamma n$,   \eqref{ellcond} yields
\begin{align*}\mathbf E\big[ ( \widehat\ell_n ^*  - \mathbf E[\widehat\ell_n^* \mid N_n])^2 \mid N_n] = o_P\Big(\frac{n(n-r_n)}{\mu(n)}\Big)=o_P\Big(\frac{n^2}{\mu(n)}\Big) \ .
\end{align*}
Because of Lemma \ref{Lmm1} (i) $n(n-1)/\mu(n)$ is increasing, which implies
\begin{align} \widehat\ell_n ^*  - \mathbf E[\widehat\ell_n^* \mid N_n]=o_P\Big(\frac{n}{\sqrt{\mu(n)}}\Big)= o_P\Big(\frac{n^2}{\mu(n)}\Big) 
\label{est3}
\end{align}
and because of \eqref{ellcond}
\begin{align}
\widehat\ell_n ^* \stackrel P\sim \frac{n(n-r_n)}{\mu(n)} \ .
\label{remark}
\end{align}
In particular, as discussed in the proof of Theorem \ref{Thm1} and addressed in the introduction, this approximation is valid for the sequence $r_n=cn$ with $0<c<1$.

(iii) Finally,  let us switch to the numbers $r_n$, $n \ge 1$, constructed in the proof of Theorem \ref{Thm1} and  fulfilling \eqref{cn} as well as  $r_n=o(n)$. As above
\begin{align*}
\mathbf E[ \widehat \ell_n -  \widehat \ell_n^*\mid N_n]  &=  \frac n{n-1}\sum_{i=\rho_n}^{\tau_n-1} W_i(X_i-1) \prod_{m=0}^{i-1} \Big( 1- \frac 1{X_m}\Big)\\
& \le 2\exp \Big( - \sum_{m=0}^{\rho_n-1} \frac 1{X_m}\Big)\sum_{i=\rho_n}^{\tau_n-1} W_iX_i \ \text{ a.s.}
\end{align*}
From   the Markov property and Theorem \ref{Thm1}, applied to the coalescent with initial value $X_{\rho_n}$, we obtain
\[ \sum_{i=\rho_n}^{\tau_n-1} W_iX_i \stackrel P\sim \int_2^{X_{\rho_n}} \frac{x}{\mu(x)}\, dx \le \int_2^{r_n} \frac{x}{\mu(x)}\, dx\ . \]
By \eqref{cn} and by monotonicity of $x/\mu(x)$ it follows
\[ \sum_{i=\rho_n}^{\tau_n-1} W_iX_i = o_P\Big(\int_{r_n}^n \frac{x}{\mu(x)}\, dx\Big) = o_P\Big( \frac {r_n}{\mu(r_n)} n\Big) \ .\]
Moreover, from Proposition \ref{Prp2}
\[\exp \Big( - \sum_{m=0}^{\rho_n-1} \frac 1{X_m}\Big) \stackrel P\sim \frac{\mu(r_n)}{r_n} \frac{n}{\mu(n)} \ ,\]
and we arrive at
\[ \mathbf E[\widehat  \ell_n -  \widehat \ell_n^*\mid N_n]  = o_P\Big( \frac{n^2}{\mu(n)} \Big)\ . \]
Hence
\begin{align*}\widehat  \ell_n - \widehat \ell_n^*  = o_P\Big( \frac{n^2}{\mu(n)} \Big)\ . 
\end{align*}
This estimate in combination with \eqref{remark} and $r_n=o(n)$ proves our theorem.
\end{proof}

\section{Proof of Theorem 3}

(i) As to the second claim, 
\[ \int_2^n \Big(\frac x{\mu(x)}- \frac n{\mu(n)}\Big)\, dx= \int_2^n \int_x^n  \frac{\kappa'(y)}{\kappa(y)^2} \, dy\, dx = \int_2^n (y-2) \frac{\kappa'(y)}{\kappa(y)^2} \, dy \ , \] 
and by Lemma \ref{Lmm1} (ii) and Lemma \ref{Lmm2}
\[ \int_2^n \Big(\frac x{\mu(x)}- \frac n{\mu(n)}\Big)\, dx \sim \int_2^n y \frac{\lambda(y)}{\mu(y)^2}\, dy \sim \int_2^n \frac{L(y)}{L^*(y)^2}\ dy \sim \frac{nL(n)}{L^*(n)^2} \ . \]

(ii) Turning to the first claim,  we now strive for a lower bound for $\widecheck \ell_n$.    We   resort to the definitions \eqref{totLaenge} and \eqref{widehatell} and set
\[\widecheck \ell_n^*:= \ell_n^* - \widehat \ell_n^*\ .\] 
Again, we first investigate  its conditional expectation, given $N_n$. Note that $\mathbf E[ \ell_n^* \mid N_n]= \ell_n^*  $ a.s., therefore in view of \eqref{est3}  
\begin{align*}
\widecheck \ell_n^* - \mathbf E[\widecheck \ell_n^* \mid N_n] =   \mathbf E[\widehat\ell_n^* \mid N_n] - \widehat\ell_n^* = o_P\Big(\frac n{\sqrt{\mu(n)}}\Big)  \ .
\end{align*}
Lemma \ref{Lmm1} (i) implies $\mu(n) \ge n\mu(2)/2$ for $n\ge 2$, hence
\begin{align}
\widecheck \ell_n^* - \mathbf E[\widecheck \ell_n^* \mid N_n] =o_P(n^{1/2} )\ . 
\label{ellwidecheck}
\end{align}

We like to estimate $\mathbf E[\widecheck \ell_n^* \mid N_n] $ from below. For this purpose we specify our choice of the numbers $r_n$. 
 We fix $h \in \mathbb N$, we define   stopping times $0=\rho_{n,h}\le \rho_{n,h-1}\le \cdots \le \rho_{n,1}$  and the corresponding times  $\tilde \rho_{n,g}$ as
\begin{align} \rho_{n,g} := \min \{ i \ge 0: X_i \le   \tfrac gh n\}  \ , \ \tilde \rho_{n,g}:= \inf \{ t\ge 0: N_n(t) \le \tfrac gh n \} \ ,  \ g=1, \ldots,h  \ , \ .
\label{rhonl}
\end{align}
As we already argued in the proof of Theorem \ref{Thm1}, we may apply Propositions 2 and 3 to these stopping times. 
We proceed now in the same manner  as in \eqref{intcond}. Using $X_i\le n$ respectively $X_i \ge n(X_i-1)/(n-1)$, we obtain
\begin{align*} \mathbf E\Big[ \sum_{i=\rho_{n,g}}^{\rho_{n,g-1}-1}& W_i(X_i-Y_i) \mid N_n\Big] = \sum_{i=\rho_{n,g}}^{\rho_{n,g-1}-1} W_i\Big(X_i- \frac n{n-1} (X_i-1) \prod_{m=0}^{i-1} \Big(1- \frac 1{X_m}\Big)\Big)\\
&\ge  \sum_{i=\rho_{n,g}}^{\rho_{n,g-1}-1} W_iX_i\Big(1- \prod_{m=0}^{i-1} \Big(1- \frac 1{X_m}\Big)\Big) \\
&\ge\Big(1- \prod_{m=0}^{\rho_{n,g}-1} \Big(1- \frac 1{X_m}\Big)\Big) \sum_{i=\rho_{n,g}}^{\rho_{n,g-1}-1} W_iX_i \\
&\ge \Big(1- \exp \Big(- \sum_{m=0}^{\rho_{n,g}-1} \frac 1{X_m}\Big)\Big)\int_{\tilde \rho_{n,g}}^{\tilde \rho_{n,g-1}} N_n(t)\, dt \quad \text { a.s.}
\end{align*}
Proposition \ref{Prp2} together with Lemma \ref{Lmm2} (ii) implies
\[ \int_{\tilde \rho_{n,g}}^{\tilde \rho_{n,g-1}} N_n(t)\, dt \stackrel 1\sim  \int_{(g-1)n/h}^{ gn/h} \frac x{\mu(x)} \, dx \sim \frac{n}{hL^*(n)} \ , \]
and Proposition \ref{Prp3} yields
\[ \sum_{m=0}^{\rho_{n,g}-1}\frac 1{X_m} \stackrel 1\sim \log \frac{\kappa(n)}{\kappa(gn/h)}\sim \log \frac{L^*(n)}{L^*(gn/h)}\ . \]
Because $L^*$ is slowly varying and because of \eqref{Lstern}, we have
\begin{align*}
\log \frac{L^*(n)}{L^*(gn/h)} \sim \frac {L^*(n)- L^*(gn/h)}{L^*(gn/h)} \sim \frac {L(n)}{L^*(n)} \log \frac hg \ ,
\end{align*}
hence because of Lemma \ref{Lmm2} (ii)
\begin{align}
\sum_{m=0}^{\rho_{n,g}-1}\frac 1{X_m}  \stackrel 1\sim \frac {L(n)}{L^*(n)} \log \frac hg =o_P(1)
\label{1X_m}
\end{align}
and
\begin{align*}
1- \exp \Big(- \sum_{m=0}^{\rho_{n,g}-1} \frac 1{X_m}\Big) \stackrel P\sim \frac {L(n)}{L^*(n)} \log \frac hg  \ .
\end{align*}
Altogether
\begin{align*} 
\mathbf E\Big[ \sum_{i=\rho_{n,g}}^{\rho_{n,g-1}-1} W_i(X_i-Y_i) \mid N_n\Big] \ge (1+o_P(1)) \frac{nL(n)}{L^*(n)^2} \frac 1h \log \frac hg   
\end{align*}
and consequently 
\begin{align*}
\mathbf E [ \widecheck \ell_n^* \mid N_n] &= \mathbf E\Big[ \sum_{i=0}^{\rho_{n,1}-1} W_i(X_i-Y_i) \mid N_n\Big] \\
&\ge (1+o_P(1))\frac{nL(n)}{L^*(n)^2} \sum_{g=2}^{h} \frac 1h \log \frac hg\\
&\ge (1+o_P(1))\frac{nL(n)}{L^*(n)^2} \int_{2/h}^1 \log \frac 1z \, dz \\
&= (1+o_P(1))\frac{nL(n)}{L^*(n)^2} \Big(1- \frac 2h - \frac 2h\log \frac 2h\Big)\ .
\end{align*}
In view  of \eqref{ellwidecheck} this estimate transfers to $\widecheck \ell_n^* $. We have $\widecheck \ell_n\ge \widecheck \ell_n^*$, thus letting $h \to \infty$,  we obtain 
\begin{align}     \widecheck \ell_n   \ge (1+o_P(1))\frac{nL(n)}{L^*(n)^2}
\label{estim5}
\end{align}
as $ n \to \infty$.

(iii) Coming to an upper bound, we have in view of $  \prod_{m=0}^{i-1} \big(1- 1/X_m\big) \ge 1- \sum_{m=0}^{i-1} 1/X_m$
\begin{align}
\mathbf E [\widecheck \ell_n] &= \mathbf E \big[\mathbf E[\widecheck \ell_n \mid N_n]\big]\notag \\
&= \mathbf E\Big[\sum_{i=0}^{\tau_n-1}  W_i\Big(X_i- \frac n{n-1} (X_i-1)\prod_{m=0}^{i-1} \Big(1- \frac 1{X_m}\Big)\Big)\Big]\notag \\
&\le \mathbf E\Big[ \sum_{i=0}^{\tau_n-1}W_i\Big(1+ \frac n{n-1}X_i \sum_{m=0}^{i-1} \frac 1{X_m}\Big) \Big]\ .
\label{estim1}
\end{align}
From Proposition \ref{Prp2} and since $x=o(\mu(x))$ (see Lemma \ref{Lmm1} (iii)),
\begin{align}\mathbf E\Big[ \sum_{i=0}^{\tau_n-1}W_i\Big] = \mathbf E[ \tilde \tau_n]= O\Big( \int_2^n \frac{dx}{\mu(x)}\Big)= o\Big(\int_2^n \frac{dx}{x}\Big) =o(\log n) \ . 
\label{estim2}
\end{align}
Furthermore, by means of the Markov property
\begin{align*}
\mathbf E\Big[ \sum_{i=0}^{\tau_n-1}W_iX_i \sum_{m=0}^{i-1} \frac 1{X_m} \Big] &=\sum_{i=0}^{n-1}\mathbf E\Big[ \frac{X_i}{\lambda(X_i)} \sum_{m=0}^{i-1} \frac 1{X_m} \, ; \, X_i>2 \Big]\\
&= \sum_{i=0}^{n-1}\mathbf E\Big[ \frac{X_i \Delta_{i+1}}{\mu(X_i)}  \sum_{m=0}^{i-1}  \frac 1{X_m}\, ; \, X_i > 2 \Big]\\
&=\mathbf E \Big[ \sum_{m=0}^{\tau_n-1} \frac 1{X_m} \sum_{i=m+1}^{\tau_n-1} \frac{X_i \Delta_{i+1}}{\mu(X_i)}\Big] \ .
\end{align*}
Since $x/\mu(x)$ is decreasing, we have $\sum_{i=m+1}^{\tau_n-1} X_i \Delta_{i+1}/\mu(X_i) \le \int_1^{X_{m+1}} x/\mu(x)\, dx$, where we let $x/\mu(x):=2/\mu(2)$ for $1\le x \le 2$. Thus
\[\mathbf E\Big[ \sum_{i=0}^{\tau_n-1}W_iX_i \sum_{m=0}^{i-1} \frac 1{X_m} \Big]  \le \mathbf E\Big[ \sum_{m=0}^{\tau_n-1} \frac 1{X_m} \int^{X_{m+1}}_1 \frac x{\mu(x)}\, dx \Big]\ .\]
Invoking the Markov property once again, we obtain
\[ \mathbf E\Big[ \sum_{i=0}^{\tau_n-1}W_iX_i \sum_{m=0}^{i-1} \frac 1{X_m} \Big] \le  \mathbf E\Big[ \sum_{m=0}^{\tau_n-1} \frac {\Delta_{m+1}}{(X_m-1)\nu(X_m) } \int^{X_{m+1}}_1 \frac x{\mu(x)}\, dx \Big]\ , \]
and taking now into account that the functions $(x-1)\nu(x)$  and $\int_1^x z/\mu(z)\, dz$ are increasing, we end up with
\begin{align} \mathbf E\Big[ \sum_{i=0}^{\tau_n-1}W_iX_i \sum_{m=0}^{i-1} \frac 1{X_m} \Big]  \le \int_1^n \frac 1{(x-1)\nu(x)} \int_1^x \frac z{\mu(z)} \ dz\, dx \ , 
\label{estim3}
\end{align}
where we let $(x-1)\nu(x):=\nu(2)$ for $1\le x \le 2$. Using Lemma \ref{Lmm2} (ii)  it follows
\begin{align*}
 \int_1^n \frac 1{(x-1)\nu(x)} \int_1^x \frac z{\mu(z)} \ dz\, dx & \sim  \int_2^n \frac {L(x)}{xL^*(x)} \int_2^x \frac 1{L^*(z)} \ dz\, dx \\
 &\sim \int_2^n  \frac {L(x)}{xL^*(x)} \frac{x}{L^*(x)}  \, dx \\
 &\sim \frac{nL(n)}{L^*(n)^2}   \ .
\end{align*}
Together with \eqref{estim1}, \eqref{estim2} and \eqref{estim3} we obtain all in all
\[ \mathbf E [\widecheck \ell_n]  \le (1+o(1))\frac{nL(n)}{L^*(n)^2} \ . \]
Combining this result with \eqref{estim5}, we get 
\[ \widecheck \ell_n \stackrel P\sim \frac{nL(n)}{L^*(n)^2}\ , \]
and invoking once again the convergence criterion of F. Riesz, also $L_1$-convergence follows. This finishes the proof of Theorem \ref{Thm3}. \qed

\section{Proof of Theorem 4}

(i) Concerning  the first claim, we have from Theorem \ref{Thm2} and Lemma \ref{Lmm2} (ii)
\[  \widehat \ell_{n,1} = \widehat \ell_n \stackrel P\sim \frac{n^2}{\mu(n)} \sim \frac n{L^*(n)}\ ,  \]
where $L^*$ is slowly varying. To obtain also $L_1$-convergence, we use Theorem \ref{Thm1} saying that
\[ \ell_n \stackrel 1\sim \int_2^n \frac{x}{\mu(x)}\, dx \sim \int_2^n \frac{dx}{L^*(x)} \sim \frac n{L^*(n)} \ . \]
In particular the random variables $L^*(n)\ell_n/n$, $n \ge 1$, make a uniformly integrable sequence. Since $\widehat \ell_{n,1}\le \ell_n$, this holds true also for the random variables $L^*(n)\widehat \ell_{n,1}/n$, $n \ge 1$. Therefore the above convergence in probability converts to $L_1$-convergence.

(ii) Now we turn to the case $a\ge 2$. Preliminary we study  lengths of the form
\[ \widehat \ell_{n,a}^* = \int_{\tilde \sigma_n}^{\tilde \rho_n} \widehat N_{n,a}(t)\, dt = \sum_{i=\sigma_n}^{\rho_n-1} W_iY_{i,a} \ , \]
where  $Y_{i,a}$ denotes the number of internal branches of order $a$ present in the coalescent after $i$ merging events. We are going to bound these numbers from below. Let $A$ denote a subset of $\{1, \ldots,n\}$ with $a$ elements.   For $1 \le k \le i$ let  
$E_{i,k,A}$ be the event that the external branches ending in $A$ are not involved in the first $k-1$ mergers, next coalesce  with the $k$-th merger to one lineage without any other branch participating, and then remain untouched by merging events till the $i$-th merger. These are disjoint events, which all contribute to $Y_{i,a}$, therefore
\[ Y_{i,a} \ge Y_{i,a}':= \sum_{k=1}^{i} \sum_{A}I_{E_{i,k,A}} \ , \]
where the second sum is taken over all $A\subset \{1, \ldots,n\}$ with $a$ elements. 

\begin{Lemma} \label{Lmm5}   Let both $A,A' \subset \{1,\ldots,n\}$ have $a\ge 1$ elements. Then for $ 1 \le k \le i$ 
\[ \mathbf P(E_{i,k,A} \mid N_n)=\frac{a!}{(X_0-1)\cdots(X_0-a)} \frac{X_{i}}{X_{k-1}}\prod_{m=0}^{k-2}\Big(1-\frac a{X_m}\Big) \prod_{m=k}^{i} \Big( 1- \frac 1{X_{m}}\Big)I_{\{\Delta_k=a-1\}} \text{ a.s.}\]
and for $A\neq A'$ or $k\neq l$ and for $1 \le j \le \ell$
\begin{align*}
\mathbf P(E_{i,k,A}\cap E_{j,l,A'}\mid N_n) \le (1+ O(X_{k-1}^{-1})+O(X_{l-1}^{-1}))\mathbf P(E_{i,k,A}\mid N_n)\mathbf P(E_{j,l,A'}\mid N_n) \text{ a.s.}
\end{align*}
\end{Lemma}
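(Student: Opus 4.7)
For the first claim I would decompose $E_{i,k,A}$ into three parts that are conditionally independent given $N_n$: (a) none of the $a$ lineages ending in $A$ participates in the first $k-1$ mergers; (b) at the $k$-th merger the coalescing lineages are precisely those of $A$, which forces $\Delta_k=a-1$; and (c) the single merged lineage arising from $A$ is untouched by mergers $k+1,\ldots,i$. For (a) I iterate the one-step hypergeometric ratio $\binom{X_{m-1}-a}{\Delta_m+1}/\binom{X_{m-1}}{\Delta_m+1}$ exactly as in the proof of Lemma~\ref{Lmm4}, obtaining
\[
\frac{(X_{k-1}-1)\cdots(X_{k-1}-a)}{(X_0-1)\cdots(X_0-a)}\prod_{m=0}^{k-2}\Bigl(1-\tfrac{a}{X_m}\Bigr).
\]
For (b), conditional on $\Delta_k=a-1$ the coalescing $a$-set is uniform over $a$-subsets of $\{1,\ldots,X_{k-1}\}$, giving $a!/(X_{k-1}(X_{k-1}-1)\cdots(X_{k-1}-a+1))$. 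For (c), Lemma~\ref{Lmm4} shifted in time with a singleton set yields $(X_i-1)/(X_k-1)\cdot\prod_{m=k}^{i-1}(1-1/X_m)$. Multiplying the three pieces, substituting $X_k=X_{k-1}-(a-1)$ (so the factor $(X_{k-1}-a)/X_{k-1}=1-a/X_{k-1}$ slides into the earlier product), and using $(X_i-1)\prod_{m=k}^{i-1}(1-1/X_m)=X_i\prod_{m=k}^{i}(1-1/X_m)$ produces the displayed formula after the obvious cancellations.

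For the second claim I first eliminate the degenerate configurations: if $A\cap A'\neq\emptyset$ with $k\neq l$, an element of the intersection would be simultaneously merged at step $\min(k,l)$ yet still external at step $\max(k,l)-1$; if $k=l$ with $A\neq A'$ the single $k$-th merger cannot equal both $A$ and $A'$. In either case $E_{i,k,A}\cap E_{j,l,A'}=\emptyset$ and the bound is trivial. So I may assume $A\cap A'=\emptyset$ and, WLOG, $k<l$ (with $j\ge l$). Conditional on $N_n$ the joint probability factors merger by merger: for $m<k$ the $m$-th merger must miss all $2a$ lineages in $A\cup A'$; at $m=k$ it must equal $A$; for $k<m<l$ it must miss the $a+1$ lineages consisting of the merged $A$-lineage and the $a$ external branches in $A'$; at $m=l$ it must equal $A'$; for $l<m\le\min(i,j)$ it must miss both merged lineages, and for $\min(i,j)<m\le\max(i,j)$ only the still-constrained merged lineage. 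The product $\mathbf{P}(E_{i,k,A}\mid N_n)\mathbf{P}(E_{j,l,A'}\mid N_n)$ admits the same per-merger decomposition but treats $A$ and $A'$ independently.

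The comparison is then carried out factor by factor. For every $m\notin\{k,l\}$ the joint/product ratio takes the form $\binom{X-r_1-r_2}{j}\binom{X}{j}/\!\left[\binom{X-r_1}{j}\binom{X-r_2}{j}\right]\le 1$, which follows from log-concavity of $r\mapsto\binom{X-r}{j}$; this covers the comparisons $2a$ vs.\ $a+a$, $a+1$ vs.\ $1+a$, and $2$ vs.\ $1+1$, and the trailing-phase factors for one merged lineage coincide exactly. The only contributions that can inflate the joint above the product sit at $m=k$ and $m=l$: in the product the single event $E_{j,l,A'}$ still contributes at $m=k$ the factor $\binom{X_{k-1}-a}{a}/\binom{X_{k-1}}{a}$ (probability that $A'$ is missed by the $a$-merger at $k$), whereas the joint contains no such factor, so the joint/product ratio at $k$ equals $\binom{X_{k-1}}{a}/\binom{X_{k-1}-a}{a}=\prod_{i=0}^{a-1}(1+a/(X_{k-1}-a-i))=1+O(X_{k-1}^{-1})$, and symmetrically $1+O(X_{l-1}^{-1})$ at $m=l$. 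Multiplying all factor-by-factor bounds yields the claim.

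The main obstacle I expect is the bookkeeping: arranging the joint and product decompositions so that they line up across the four phases (before $k$, between $k$ and $l$, between $l$ and $\min(i,j)$, after $\min(i,j)$), and applying the log-concavity bound uniformly in the merger sizes $\Delta_m+1$. Apart from that, the argument is essentially a controlled re-run of the proof of Lemma~\ref{Lmm4}.
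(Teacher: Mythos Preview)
Your proposal is correct and follows essentially the same approach as the paper. The first part is identical: both decompose $E_{i,k,A}$ into the three phases (survive to $k-1$, merge exactly at $k$, survive as a singleton to $i$) using Lemma~\ref{Lmm4}/formula~\eqref{ANn}, then cancel and rewrite using $X_k-1=X_{k-1}-a$ and $X_i-1=X_i(1-1/X_i)$. For the second part the paper also disposes of the degenerate cases and then restricts to $A\cap A'=\emptyset$ with $k<l$; it writes out the full closed-form joint probability via \eqref{ANn} for one representative ordering ($k<l<i\le j$), bounds termwise using inequalities such as $(1-2a/X_m)\le(1-a/X_m)^2$, and identifies the leftover multiplicative factor $\tfrac{(X_0-1)\cdots(X_0-a)}{(X_0-a-1)\cdots(X_0-2a)}(1-a/X_{k-1})^{-1}=1+O(X_{k-1}^{-1})$. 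Your per-merger comparison via log-concavity of $r\mapsto\binom{X-r}{j}$ is just a cleaner packaging of the same inequalities, and the bookkeeping across the remaining orderings is, as both you and the paper note, routine.
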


\begin{proof}
We proceed similar as in the proof of Lemma \ref{Lmm4}. From \eqref{ANn} and  the Markov property
\begin{align*}
\mathbf P(E_{i,k,A} \mid N_n) = &\frac{(X_{k-1}-1)\cdots (X_{k-1}-a)}{(X_0-1)\cdots(X_0-a)} \prod_{m=0}^{k-2}\Big(1-\frac a{X_m}\Big) \\
&\quad \mbox{}\times \binom {X_{k-1}}a ^{-1}I_{\{\Delta_k=a-1\}} \times  \frac{X_{i}-1}{X_{k}-1}\prod_{m=k}^{i-1} \Big( 1- \frac 1{X_{m}}\Big) \text{ a.s.}
\end{align*}
Note that $X_k-1= X_{k-1}-a$ on the event $\{\Delta_k=a-1\}$. Thus all factors containing $X_{k-1}$ cancel up to the term $X_{k-1}$ in the denominator. Replacing also $X_i-1$ by $X_i(1- 1/X_i)$, the first statement follows.

For the second claim note that in the case $A\neq A'$ and  $A\cap A'\neq \emptyset$ or in the case $A=A'$ and $k\neq l$ the events $E_{i,k,A}$ and $ E_{j,l,A'}$ are disjoint,  then our claim is obvious. Therefore we may assume that $A\cap A'=\emptyset$. Let us consider the case $k < l < i\le j$. Then from \eqref{ANn} and  the Markov property
\begin{align*}
\mathbf P(E_{i,k,A}&\cap E_{j,l,A'}\mid N_n) =  \frac{(X_{k-1}-1)\cdots (X_{k-1}-2a)}{(X_0-1)\cdots(X_0-2a)} \prod_{m=0}^{k-2}\Big(1-\frac {2a}{X_m}\Big)\\
&\quad \mbox{}\times \binom {X_{k-1}}a ^{-1}I_{\{\Delta_k=a-1\}}\times \frac{(X_{l-1}-1)\cdots (X_{l-1}-a-1)}{(X_k-1)\cdots(X_k-a-1)} \prod_{m=k}^{l-2}\Big(1-\frac {a+1}{X_m}\Big)\\
&\quad \mbox{}\times \binom {X_{l-1}}a ^{-1}I_{\{\Delta_l=a-1\}}\times \frac{(X_i-1)(X_i-2)}{(X_l-1)(X_{l}-2)}\prod_{m=l}^{i-1}\Big(1-\frac {2}{X_m}\Big)\\
&\quad \mbox{}\times \frac{X_j-1}{X_i-1 }\prod_{m=i}^{j-1}\Big(1-\frac {1}{X_m}\Big) \ \text{ a.s.}
\end{align*}
Here  the product $(X_k-1)\cdots(X_k-a-1)= (X_{k-1}-a)\cdots(X_{k-1}-2a)$  cancels out on the event $\{\Delta_k=a-1\}$, and again only the factor $X_{k-1}$ remains. Similarly $X_l-2$ and $X_{l-1}-a-1$ cancel. By increasing some other terms, we get
\begin{align*}
\mathbf P(E_{i,k,A}&\cap E_{j,l,A'}\mid N_n) \le \frac{a!a!}{(X_0-1)\cdots(X_0-2a)} \prod_{m=0}^{k-2}\Big(1-\frac {a}{X_m}\Big)^2\\
&\quad \mbox{}\times I_{\{\Delta_k=a-1\}}\frac 1{X_{k-1}}\prod_{m=k}^{l-2}\Big(1-\frac {a}{X_m}\Big)\Big(1-\frac {1}{X_m}\Big)\\
&\quad \mbox{}\times I_{\{\Delta_l=a-1\}}\frac 1{X_{l-1}}\prod_{m=l}^{i-1}\Big(1-\frac {1}{X_m}\Big)\Big(1-\frac {1}{X_m}\Big)\\
&\quad \mbox{}\times (X_i-1)(X_j-1) \prod_{m=i}^{j-1}\Big(1-\frac {1}{X_m}\Big) \text{ a.s.}
\end{align*}
Replacing also $X_i-1$ and $X_j-1$ as above and comparing with our first formula, we obtain
\begin{align*}
\mathbf P(E_{i,k,A}&\cap E_{j,l,A'}\mid N_n) \\
&\le \frac{(X_0-1)\cdots (X_0-a)}{(X_0-a-1)\cdots (X_0-2a)} \Big(1- \frac a{X_{k-1}}\Big)^{-1} \mathbf P(E_{i,k,A}\mid N_n)\mathbf P(E_{j,l,A'}\mid N_n) \text{ a.s.}
\end{align*}
This implies our second claim. Other cases like $k<i<l<j$ are treated similarly.
\end{proof}

\newpage

(iii) Coming back to the proof of the theorem's second claim, we first consider, similarly as above, conditional expectations given $N_n$. Recall from \eqref{rhonl} the definition of $\rho_{n,g}$. We have from Lemma \ref{Lmm5} for $g \ge 2$
\begin{align*}
\mathbf E\Big[ &\sum_{i=\rho_{n,g}}^{\rho_{n,g-1}-1} W_iY_{i,a}'  \mid N_n\Big] = \sum_{i=\rho_{n,g}}^{\rho_{n,g-1}-1}W_i\sum_{k=1}^{i} \sum_A \mathbf P(E_{i,k,A} \mid N_n)\\
&= \frac {X_0}{X_0-a} \sum_{i=\rho_{n,g}}^{\rho_{n,g-1}-1}W_i\sum_{k=1}^{i}  \frac{X_{i}}{X_{k-1}}\prod_{m=0}^{k-2}\Big(1-\frac a{X_m}\Big) \prod_{m=k}^{i} \Big( 1- \frac 1{X_{m}}\Big)I_{\{\Delta_k=a-1\}} \text{ a.s.}
\end{align*}
The products  may be estimated from above by 1 and, by means of Proposition \ref{Prp3} and the bound $(1-z)\ge \exp(-cz)$ for $z \le 1/2$ and a suitable $c>0$, from below uniformly in $i,k$ by
\[ \prod_{m=0}^{\rho_{n,1}-1}\Big(1-\frac a{X_m}\Big)\ge \exp\Big(-ca \sum_{m=0}^{\rho_{n,1}-1} \frac 1{X_m} \Big)\stackrel P \sim \Big(\frac{\kappa(n/h)}{\kappa(n)}\Big)^{ca}\sim  \Big(\frac{L^*(n/h)}{L^*(n)}\Big)^{ca} \sim 1\ . \]
Consequently we may replace the products by 1 and obtain
\begin{align}
\mathbf E\Big[ \sum_{i=\rho_{n,g}}^{\rho_{n,g-1}-1}W_i Y_{i,a}'  \mid N_n\Big] &\stackrel P\sim  \sum_{i=\rho_{n,g}}^{\rho_{n,g-1}-1}W_iX_{i}\sum_{k=1}^{i} \frac{1}{X_{k-1}}I_{\{\Delta_k=a-1\}}
\label{Yia}
\end{align}
as $n \to \infty$. 

From Lemma \ref{Lmm2} (i) and \eqref{1X_m},
\[\sum_{m=0}^{\rho_{n,g}-1} \frac 1{X_m} \mathbf P(\Delta_{m+1}=a-1) \stackrel P\sim \frac 1{(a-1)a}\sum_{m=0}^{\rho_{n,g}-1} \frac 1{X_m} \stackrel P\sim  \frac 1{(a-1)a}\frac {L(n)}{L^*(n)} \log \frac hg \ .\]
By the Markov property
\begin{align*}\mathbf E\Big[\Big( &\sum_{m=0}^{\rho_{n,g}-1}\frac{1}{X_{m}} \big(I_{\{\Delta_{m+1}=a-1\}} - \mathbf P(\Delta_{m+1}=a-1)\big) \Big)^2 \Big] \\& = \mathbf E\Big[ \sum_{m=0}^{\rho_{n,g}-1}\frac{1}{X_{m}^2} \big(I_{\{\Delta_{m+1}=a-1\}} - \mathbf P(\Delta_{m+1}=a-1)\big)^2  \Big] \\
&\le \mathbf E\Big[ \frac1{X_{\rho_{n,g }-1}-1} \Big] \le \frac {1}{ (gn/h)-1} \ .
\end{align*}
Therefore
\[ \sum_{m=0}^{\rho_{n,g}-1}\frac 1{X_m}I_{\{\Delta_{m+1}=a-1\}}\stackrel P\sim   \frac 1{a(a-1)}\frac {L(n)}{L^*(n)} \log \frac hg \ ,\]
also once more by Proposition \ref{Prp2}  and Lemma \ref{Lmm2} (ii)
\[ \sum_{i=\rho_{n,g}}^{\rho_{n,g-1}-1}W_iX_{i} = \int_{\tilde\rho_{n,g}}^{\tilde \rho_{n,g-1}} N_n(t)\, dt  \stackrel 1\sim  \int_{(g-1)n/h}^{ gn/h} \frac x{\mu(x)} \, dx \sim \frac{n}{hL^*(n)} \ .\]
These formulas yield together with \eqref{Yia} the following lower and upper bounds for $g \ge 2$
\begin{align}
\mathbf E\Big[ \sum_{i=\rho_{n,g}}^{\rho_{n,g-1}-1} W_iY_{i,a}'  \mid N_n\Big] \ge (1+o_P(1)) \frac 1{a(a-1)}\frac {nL(n)}{L^*(n)^2} \frac 1h \log \frac hg 
\label{upper}
\end{align}
and 
\begin{align}
\mathbf E\Big[ \sum_{i=\rho_{n,g}}^{\rho_{n,g-1}-1} W_iY_{i,a}'  \mid N_n\Big] \le (1+o_P(1)) \frac 1{a(a-1)}\frac {nL(n)}{L^*(n)^2} \frac 1h \log \frac h{g-1} \ .
\label{lower}
\end{align}

(iv) Now we estimate the difference between $\sum_{i=\rho_{n,g}}^{\rho_{n,g-1}-1} W_iY_{i,a}' $ and its conditional expectation given $N_n$. We have
\begin{align*}
\mathbf E\Big[ &\Big( \sum_{i=\rho_{n,g}}^{\rho_{n,g-1}-1} W_iY_{i,a}'   -\mathbf E\Big[ \sum_{i=\rho_{n,g}}^{\rho_{n,g-1}-1} W_iY_{i,a}'  \mid N_n\Big]\Big)^2 \mid N_n\Big]\\
&= \mathbf E\Big[ \Big( \sum_{i=\rho_{n,g}}^{\rho_{n,g-1}-1} W_i\sum_{k=1}^{i} \sum_{A}(I_{E_{i,k,A}}- \mathbf P(E_{i,k,A}\mid N_n) \Big)^2\mid N_n \Big] \\
&= \sum_{i,j=\rho_{n,g}}^{\rho_{n,g-1}-1} W_iW_j\sum_{k=1}^{i}\sum_{l=1}^{j}\sum_{A,A'}(\mathbf P(E_{i,k,A}\cap E_{j,l,A'}\mid N_n)- \mathbf P(E_{i,k,A}\mid N_n)\mathbf P( E_{j,l,A'}\mid N_n)) \text{ a.s.}
\end{align*}
For $A=A'$, $k=l$ and  $i\le j$ we use the estimate
 \begin{align*} \mathbf P(E_{i,k,A}\cap E_{j,k,A}\mid N_n) \le \mathbf P( E_{i,k,A}\mid N_n)  \text{ a.s.}
\end{align*}
Taking also account of Lemma \ref{Lmm5}, we  obtain with some $c>0$
\begin{align*}
\mathbf E\Big[ &\Big( \sum_{i=\rho_{n,g}}^{\rho_{n,g-1}-1} W_iY_{i,a}'   -\mathbf E\Big[ \sum_{i=\rho_{n,g}}^{\rho_{n,g-1}-1} W_iY_{i,a}'  \mid N_n\Big]\Big)^2 \mid N_n\Big]\\
&\le 2 \sum_{i=\rho_{n,g}}^{\rho_{n,g-1}-1} \sum_{j=i}^{\rho_{n,g-1}-1} \sum_{k=1}^{i}\sum_{A}W_iW_j\mathbf P(E_{i,k,A}\mid N_n)\\
&\quad \mbox{}+ c\sum_{i,j=\rho_{n,g}}^{\rho_{n,g-1}-1}   \sum_{k=1}^{i}\sum_{l=1}^{j}\sum_{A,A'}W_iW_j\Big(\frac{1}{X_k}+ \frac 1{X_l}\Big) \mathbf P(E_{i,k,A}\mid N_n)\mathbf P(E_{j,l,A'}\mid N_n) \\
&\le 2 \sum_{j=0}^{\rho_{n,g-1}-1}W_j\sum_{i=\rho_{n,g}}^{\rho_{n,g-1}-1}  \sum_{k=1}^{i}\sum_{A}W_i\mathbf P(E_{i,k,A}\mid N_n)\\
&\quad \mbox{}+ \frac {2c}{X_{\rho_{n,g-1}-1} } \Big(\sum_{i=\rho_{n,g}}^{\rho_{n,g-1}-1}   \sum_{k=1}^{i}\sum_{A}W_i \mathbf P(E_{i,k,A}\mid N_n)\Big)^2\\
&= 2 \tilde \rho_{n,g-1} \mathbf E\Big[ \sum_{i=\rho_{n,g}}^{\rho_{n,g-1}-1} W_iY_{i,a}'  \mid N_n\Big]+ \frac {2ch}n\Big(\mathbf E\Big[ \sum_{i=\rho_{n,g}}^{\rho_{n,g-1}-1} W_iY_{i,a}'  \mid N_n\Big]\Big)^2 \text{ a.s.}
\end{align*}
Using \eqref{lower}, $L_n=o(L^*(n))$ and the fact that $L^*(n) $ is increasing, this implies for $g\ge 2$ 
\[ \mathbf E\Big[ \Big( \sum_{i=\rho_{n,g}}^{\rho_{n,g-1}-1} W_iY_{i,a}'   -\mathbf E\Big[ \sum_{i=\rho_{n,g}}^{\rho_{n,g-1}-1} W_iY_{i,a}'  \mid N_n\Big]\Big)^2 \mid N_n\Big] = o(n) \]
and thus from \eqref{upper} for $g \ge 2$ 
\begin{align*}
\sum_{i=\rho_{n,g}}^{\rho_{n,g-1}-1} W_iY_{i,a}' \ge (1+o_P(1)) \frac 1{a(a-1)}\frac {nL(n)}{L^*(n)^2} \frac 1h \log \frac hg \ .
\end{align*}

(v) The last formula implies for $a \ge 2$ that
\begin{align*}
\widehat \ell_{n,a}&=\sum_{i=0}^{\tau_n-1} W_iY_{i,a} \ge \sum_{i= \rho_{n,h-1}}^{\rho_{n, 1}-1} W_iY_{i,a}'\\&\ge (1+o_P(1)) \frac 1{a(a-1)}\frac {nL(n)}{L^*(n)^2} \sum_{g=2}^{h-1} \frac 1h \log \frac hg \\
&\ge (1+o_P(1)) \frac 1{a(a-1)} \frac {nL(n)}{L^*(n)^2} \int_{2/h}^1 \log \frac 1z \, dz \ .
\end{align*}
Letting $h\to \infty$, we obtain the lower estimate
\[\widehat \ell_{n,a} \ge (1+o_P(1)) \frac 1{a(a-1)} \frac {nL(n)}{L^*(n)^2} \ . \]
For an upper estimate note that $\widecheck \ell_n= \sum_{a \ge 2} \widehat \ell_{n,a}$. This formula and Theorem \ref{Thm3} imply for any natural number $r$
\begin{align*}
\widehat \ell_{n,a} &\le \widecheck \ell_n - \sum_{2\le b \le r, b \neq a} \widehat \ell_{b,n} \\&\le (1+o_P(1))\frac {nL(n)}{L^*(n)^2} \Big(1-  \sum_{2\le b \le r, b \neq a}\frac1{(b-1)b}  \Big)\\
&\stackrel P\sim\frac {nL(n)}{L^*(n)^2}\Big(  \frac 1r + \frac 1{(a-1)a} \Big)\ .
\end{align*}
Letting  $r\to \infty$ yields the  upper estimates and we obtain altogether
\[ \widehat \ell_{n,a} \stackrel P\sim \frac 1{a(a-1)} \frac {nL(n)}{L^*(n)^2} \ . \]
In order to achieve also $L_1$-convergence we deduce from Theorem \ref{Thm3} that the random variables $\widecheck \ell_n L^*(n)^2/(nL(n))$ form a uniformly integrable sequence. Since $\widehat \ell_{n,a} \le \widecheck \ell_n$, the same holds true for the random variables $\widehat \ell_{n,a} L^*(n)^2/(nL(n))$. Thus also $L_1$-convergence follows. This finishes the proof of Theorem \ref{Thm4}. \qed

\end{document}